\theoremstyle{plain}
        \newtheorem{theorem}{Theorem}[section]
        \newtheorem*{theorem*}{Theorem}
        \newtheorem*{conj*}{Conjecture}
        \newtheorem{lemma}[theorem]{Lemma}
        \newtheorem{prop}[theorem]{Proposition}
        \newtheorem{thmx}{Theorem}
\theoremstyle{definition}
        \newtheorem{definition}[theorem]{Definition}
        \newtheorem{rem}[theorem]{Remark}
         \newtheorem{rems}[theorem]{Remarks}
\theoremstyle{remark}
\numberwithin{equation}{section}
\numberwithin{theorem}{section}
\numberwithin{table}{section}
\numberwithin{figure}{section}
\providecommand{\defn}[1]{\emph{#1}}
\renewcommand{\leq}{\leqslant}
\renewcommand{\geq}{\geqslant}
\newcommand{\diam}  {\operatorname{diam}}
\newcommand{\inte}  {\operatorname{inte}}
\newcommand{\inter}  {\operatorname{int}}
\newcommand{\card} {\operatorname{card}}
\newcommand{\supp}{\operatorname{supp}}
\newcommand{\R}{\mathbb{R}}
\newcommand{\B}{\mathbb{B}}    
\newcommand{\C}{\mathbb{C}}      
\newcommand{\N}{\mathbb{N}}      
\newcommand{\Z}{\mathbb{Z}}      
\newcommand{\D}{\mathbb{D}}
\providecommand{\abs}[1]{\lvert#1\rvert}
\newcommand{\Abs}[1]{\left\lvert#1\right\rvert}
\providecommand{\Absbig}[1]{\bigl\lvert#1\bigr\rvert}
\providecommand{\Absbigg}[1]{\biggl\lvert#1\biggr\rvert}
\providecommand{\AbsBig}[1]{\Bigl\lvert#1\Bigr\rvert}
\providecommand{\norm}[1]{\|#1\|}
\providecommand{\Norm}[1]{\left\|#1\right\|}
\renewcommand{\:}{\colon}
\renewcommand{\b}{\mathfrak{b}}
\newcommand{\w}{\mathfrak{w}}
\renewcommand{\c}{\mathfrak{c}}
\newcommand{\crit}{\operatorname{crit}}
\newcommand{\post}{\operatorname{post}}
\newcommand{\CC}{\mathcal{C}}
 \newcommand{\DD}{\mathbf{D}}
\newcommand{\X} {\mathbf{X}}
\newcommand{\E} {\mathbf{E}}
\newcommand{\V} {\mathbf{V}}
\newcommand{\W} {\mathbf{W}}
\newcommand{\CCC}{C}
\newcommand{\Holder}[1] {\CCC^{0,#1}}
\newcommand{\Hseminorm}[2] {\Abs{#2}_{#1}}
\newcommand{\Hnorm}[3] {\Norm{#2}_{\Holder{#1}{#3}}}
\newcommand{\vertiii}[1]{{\left\vert\kern-0.25ex\left\vert\kern-0.25ex\left\vert #1 
    \right\vert\kern-0.25ex\right\vert\kern-0.25ex\right\vert}}
\newcommand{\vertiiibig}[1]{{\bigl\vert\kern-0.25ex\bigl\vert\kern-0.25ex\bigl\vert #1 
    \bigr\vert\kern-0.25ex\bigr\vert\kern-0.25ex\bigr\vert}}
\newcommand{\vertiiiBig}[1]{{\Bigl\vert\kern-0.25ex\Bigl\vert\kern-0.25ex\Bigl\vert #1 
    \Bigr\vert\kern-0.25ex\Bigr\vert\kern-0.25ex\Bigr\vert}}
\newcommand{\wt}[1]{\widetilde{#1}}
\newcommand{\minus}{\scalebox{0.6}[0.6]{$-\!\!\;$}}
\newcommand{\circsmall}{\scalebox{0.6}[0.6]{$\circ$}}
\newcommand{\Orb}{\mathfrak{P}}
\newcommand{\bigO}{\mathcal{O}}
\begin{document}
\title[Prime orbit theorems for expanding Thurston maps]{Prime orbit theorems for expanding Thurston maps: Genericity of strong non-integrability condition}
\author{Zhiqiang~Li \and Tianyi~Zheng}
%\thanks{Z.~Li was partially supported by NSFC Nos.~12101017, 12090010, 12090015, and BJNSF No.~1214021.}

\address{Zhiqiang~Li, School of Mathematical Sciences \& Beijing International Center for Mathematical Research, Peking University, Beijing 100871, China}
\email{zli@math.pku.edu.cn}
\address{Tianyi~Zheng, Department of Mathematics, University of California, San Diego, San Diego, CA 92093--0112}
%\thanks{The author was partially supported by NSF grant .}
\email{tzheng2@math.ucsd.edu}
%\date{\today}

\subjclass[2020]{Primary: 37C30; Secondary: 37C35, 37F15, 37B05, 37D35}

\keywords{expanding Thurston map, postcritically-finite map, rational map, Latt\`{e}s map, Prime Orbit Theorem, Prime Number Theorem, Ruelle zeta function, dynamical zeta function, dynamical Dirichlet series, thermodynamic formalism, Ruelle operator, transfer operator, strong non-integrability, non-local integrability.}

\begin{abstract}
In the second paper \cite{LZ24b} of this series, we obtained an analog of the prime number theorem for a class of branched covering maps on the $2$-sphere $S^2$ called expanding Thurston maps, which are topological models of some non-uniformly expanding rational maps without any smoothness or holomorphicity assumption. More precisely, the number of primitive periodic orbits, ordered by a weight on each point induced by a non-constant (eventually) positive real-valued H\"{o}lder continuous function on $S^2$ satisfying the $\alpha$-strong non-integrability condition, is asymptotically the same as the well-known logarithmic integral, with an exponential error bound. In this third and last paper of the series, we show that the $\alpha$-strong non-integrability condition is generic in the class of $\alpha$-H\"{o}lder continuous functions.
\end{abstract}

\maketitle

\tableofcontents

\section{Introduction}

Complex dynamics is a vibrant field of dynamical systems, focusing on the study of iterations of polynomials and rational maps on the Riemann sphere $\widehat{\C}$. It is closely connected, via \emph{Sullivan's dictionary} \cite{Su85, Su83}, to geometric group theory, which mainly concerns the study of Kleinian groups.

In complex dynamics, the lack of uniform expansion of a rational map arises from critical points in the Julia set. Rational maps for which each critical point is preperiodic (i.e., eventually periodic) are called \emph{postcritically-finite rational maps} or \emph{rational Thurston maps}. One natural class of non-uniformly expanding rational maps are called \emph{topological Collet--Eckmann maps}, whose basic dynamical properties have been studied extensively (see for example, \cite{PRLS03, PRL07, PRL11, RLS14}). In this paper, we focus on a subclass of topological Collet--Eckmann maps for which each critical point is preperiodic and the Julia set is the whole Riemann sphere. Actually, the most general version of our results is established for topological models of these maps, called \emph{expanding Thurston maps}. Thurston maps were studied by W.~P.~Thurston in his celebrated characterization theorem of postcritically-finite rational maps among such topological models \cite{DH93}. Thurston maps and Thurson's theorem, sometimes known as a fundamental theorem of complex dynamics, are indispensable tools in the modern theory of complex dynamics. Expanding Thurston maps were studied extensively by M.~Bonk, D.~Meyer \cite{BM10, BM17} and P.~Ha\"issinsky, K.~M.~Pilgrim \cite{HP09}.

The investigations of the growth rate of the number of periodic orbits (e.g.\ closed geodesics) have been a recurring theme in dynamics and geometry. 

Inspired by the seminal works of F.~Naud \cite{Na05} and H.~Oh, D.~Winter \cite{OW17} on the growth rate of periodic orbits, known as Prime Orbit Theorems, for hyperbolic (uniformly expanding) polynomials and rational maps, we establish in \cite{LZ24b} the first Prime Orbit Theorems (to the best of our knowledge) with exponential error bounds in a non-uniformly expanding setting in complex dynamics. On the other side of Sullivan's dictionary, see related works \cite{MMO14, OW16, OP19}. For an earlier work on dynamical zeta functions for a class of sub-hyperbolic quadratic polynomials, see V.~Baladi, Y.~Jiang, and H.~H.~Rugh \cite{BJR02}. See also the related work of S.~Waddington \cite{Wad97} on strictly preperiodic points of hyperbolic rational maps and the recent work of M.~Pollicott and M.~Urba\'{n}ski \cite{PoU21} on periodic pairs and preimage points of many hyperbolic and parabolic systems.

Given a map  $f\: X \rightarrow X$ on a metric space $(X,d)$ and a function $\phi\: S^2 \rightarrow \R$,  we define the weighted length $l_{f,\phi} (\tau)$ of a primitive periodic orbit 
\begin{equation*}
\tau \coloneqq \bigl\{x, \, f(x), \,  \cdots, \,  f^{n-1}(x) \bigr\} \in \Orb(f)
\end{equation*}
as
\begin{equation}  \label{eqDefComplexLength}
l_{f,\phi} (\tau) \coloneqq \phi(x) + \phi(f(x)) + \cdots + \phi \bigl(f^{n-1}(x) \bigr).
\end{equation}
We denote by
\begin{equation}   \label{eqDefPiT}
\pi_{f,\phi}(T) \coloneqq \card \{ \tau \in \Orb(f)   :  l_{f,\phi}( \tau )  \leq T \}, \qquad   T>0,
\end{equation}
the number of primitive periodic orbits with weighted lengths up to $T$. Here $\Orb(f)$ denotes the set of all primitive periodic orbits of $f$ (see Section~\ref{sctNotation}). 

Note that the Prime Orbit Theorems in \cite{Na05, OW17} are established for the \emph{geometric potential} $\phi= \log \abs{f'}$. For hyperbolic rational maps, the Lipschitz continuity of the geometric potential plays a crucial role in \cite{Na05, OW17}. In our non-uniform expanding setting, critical points destroy the continuity of $\log\abs{f'}$. So we are left with two options to develop our theory, namely, considering 
\begin{enumerate}
\smallskip
\item[(a)] H\"older continuous $\phi$ or

\smallskip
\item[(b)] the geometric potential $\log\abs{f'}$.
\end{enumerate}
Despite the lack of H\"older continuity of $\log\abs{f'}$ in our setting, its value is closely related to the size of pull-backs of sets under backward iterations of the map $f$. This fact enables an investigation of the Prime Orbit Theorem in case (b), which will be studied in an upcoming series of separate works starting with \cite{LRL}. 

The current paper is the third and last of a series of three papers (together with \cite{LZ24a,LZ24b}) focusing on case (a), in which the incompatibility of H\"older continuity of $\phi$ and non-uniform expansion of $f$ calls for a close investigation of metric geometries associated to $f$.

We introduced the $\alpha$-strong non-integrability condition in \cite{LZ24b} for potentials for expanding Thurston maps, inspired by the work of D.~Dolgopyat \cite{Do00}. Functions satisfying the $\alpha$-strong non-integrability condition play critical roles in the following theorem established in \cite[Theorem~B]{LZ24b}.

\begin{theorem*}[Prime Orbit Theorems for rational expanding Thurston maps]  \label{thmPrimeOrbitTheorem_rational}
	Let $f\: \widehat{\C} \rightarrow \widehat{\C}$ be a postcritically-finite rational map without periodic critical points. Let $\sigma$ be the chordal metric on the Riemann sphere $\widehat{\C}$, and $\phi \: \widehat{\C} \rightarrow \R$ be an eventually positive real-valued H\"{o}lder continuous function. Then there exists a unique positive number $s_0>0$ with topological pressure $P(f,-s_0 \phi) = 0$ and there exists $N_f\in\N$ depending only on $f$ such that for each $n\in \N$ with $n\geq N_f$, the following statement holds for $F\coloneqq f^n$ and $\Phi\coloneqq \sum_{i=0}^{n-1} \phi \circ f^i$:
	
	\begin{enumerate}
		\smallskip
		\item[(i)] $\pi_{F,\Phi}(T) \sim \operatorname{Li}\bigl( e^{s_0 T} \bigr)$ as $T \to + \infty$
		if and only if $\phi$ is not cohomologous to a constant in $\CCC\bigl(\widehat{\C} \bigr)$.
		
		\smallskip
		\item[(ii)] Assume that $\phi$ satisfies the $\alpha$-strong non-integrability condition (with respect to $f$ and a visual metric) for some $\alpha \in (0,1]$. Then there exists $\delta \in (0, s_0)$ such that
		$\pi_{F,\Phi}(T) = \operatorname{Li}\bigl( e^{s_0 T} \bigr)  + \bigO \bigl( e^{(s_0 - \delta)T} \bigr) $ as $T \to + \infty$.
	\end{enumerate}
	Here $P(f,\cdot)$ denotes the topological pressure, and $\operatorname{Li} (y) \coloneqq \int_2^y\! \frac{1}{\log u} \,\mathrm{d} u$, $y>0$, is the \defn{Eulerian logarithmic integral function}.
\end{theorem*}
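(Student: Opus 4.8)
The plan is to reduce the orbit count to the singularity structure of a dynamical zeta function and to analyse the latter via transfer operators. First I would pass to the setting of a visual metric: a postcritically-finite rational map with no periodic critical point is an expanding Thurston map, the chordal metric $\sigma$ is quasisymmetrically equivalent to any visual metric $d$ on $\widehat{\C}$, and a quasisymmetry between bounded doubling spaces is bi-H\"older, so every $\sigma$-H\"older $\phi$ lies in $\CCC^{0,\beta}(\widehat{\C},d)$ for some $\beta\in(0,1]$. Since $\phi$ is eventually positive, $s\mapsto P(f,-s\phi)$ is real-analytic, strictly decreasing, and surjective onto $\R$, giving a unique $s_0>0$ with $P(f,-s_0\phi)=0$. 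Replacing $f$ by $F=f^n$ and $\phi$ by $\Phi=\sum_{i=0}^{n-1}\phi\circ f^i$ for $n\geq N_f$ leaves $s_0$ unchanged (pressure is linear under iteration), makes $\Phi$ strictly positive, and --- this is the role of $N_f$ --- makes the cell decompositions of $F$ combinatorially tame, so that $F$ admits a good finite Markov coding and the holomorphic family of Ruelle operators $\mathcal{L}_{-s\Phi}$ on $\CCC^{0,\beta}(\widehat{\C},d)$ is well controlled. Encoding the count in the dynamical Dirichlet series $\mathfrak{D}_{F,\Phi}(s)=\sum_{\tau\in\Orb(F)}l_{F,\Phi}(\tau)\,e^{-s\,l_{F,\Phi}(\tau)}$ (equivalently a Ruelle zeta function), and grouping periodic points by period so that $l_{F,\Phi}$ becomes a Birkhoff sum of $\Phi$, I would relate $\mathfrak{D}_{F,\Phi}$ to the dominant-eigenvalue dynamics of $\mathcal{L}_{-s\Phi}$; for real $s$ near $s_0$ this operator has a spectral gap with leading eigenvalue $e^{P(F,-s\Phi)}$, which equals $1$ at $s=s_0$ and has nonzero derivative there, producing a simple pole of $\mathfrak{D}_{F,\Phi}$ at $s_0$ with the expected residue.

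For part (i), analytic perturbation extends the spectral-gap picture to a complex neighbourhood of $s_0$, so the asymptotic follows from the Wiener--Ikehara Tauberian theorem once $\mathfrak{D}_{F,\Phi}$ is shown to have no other singularity on the line $\RE s=s_0$. A singularity at $s_0+it_0$ with $t_0\neq0$ is equivalent to the spectral radius of the twisted operator $\mathcal{L}_{-(s_0+it_0)\Phi}$ being $1$, which by a Liv\v{s}ic-type argument happens exactly when $t_0\Phi$ is cohomologous to a $2\pi\Z$-valued function, i.e.\ when all weighted lengths $l_{F,\Phi}(\tau)$ lie in a single coset of $\frac{2\pi}{t_0}\Z$; unwinding $\Phi=\sum_{i=0}^{n-1}\phi\circ f^i$, this is equivalent to $\phi$ being cohomologous to a constant in $\CCC(\widehat{\C})$. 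Hence if $\phi$ is not cohomologous to a constant, Wiener--Ikehara gives $\pi_{F,\Phi}(T)\sim\operatorname{Li}(e^{s_0T})$; conversely, if $\phi=c+u\circ f-u$ then $l_{F,\Phi}(\tau)$ takes values in the discrete set $nc\N$, so $\pi_{F,\Phi}$ is a step function and cannot be asymptotic to the smooth $\operatorname{Li}(e^{s_0T})\sim e^{s_0T}/(s_0T)$.

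For part (ii), the $\alpha$-strong non-integrability condition is what lets me push the analytic continuation of $\mathfrak{D}_{F,\Phi}$ past $\RE s=s_0$. One establishes Dolgopyat-type oscillatory cancellation for $\mathcal{L}_{-s\Phi}$ with $s=\xi+it$, $\xi$ near $s_0$ and $\abs{t}$ large, showing that a fixed power $\mathcal{L}_{-s\Phi}^m$ contracts in a Dolgopyat-type norm interpolating the sup-norm and the H\"older seminorm (with the interpolation scale tied to $\abs{t}$), the point being that SNI forces the phase functions in the branches of $\mathcal{L}_{-s\Phi}^m$ to oscillate and blocks their cancellation. Combined with classical bounds for bounded $\abs{t}$, this yields a holomorphic, polynomially bounded extension of $\mathfrak{D}_{F,\Phi}$ minus its principal part to the half-plane $\RE s>s_0-\delta$ for some $\delta\in(0,s_0)$, and a quantitative Tauberian argument (a Perron-type contour integral with the contour shifted past $\RE s=s_0$, as in the proof of the prime number theorem with error term) gives $\pi_{F,\Phi}(T)=\operatorname{Li}(e^{s_0T})+\bigO(e^{(s_0-\delta)T})$.

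The hard part is the Dolgopyat estimate in this non-uniformly expanding setting: near the critical points of $f$ the distortion of the inverse branches of $F^m$ is unbounded, so the usual H\"older control of the geometric data (in particular of $\log\abs{f'}$) breaks down, and the cancellation argument must be run entirely within the tile combinatorics of the Bonk--Meyer cell decompositions, estimating the relevant temporal-distance functions by hand; the $\alpha$-strong non-integrability condition is precisely the hypothesis engineered so that this transversality is nondegenerate. A further technical point is that the weighted periodic-point sums are not literal operator traces here, so one must control the error between them and the iterates $\mathcal{L}_{-s\Phi}^m$, which is exactly where the combinatorial tameness gained by taking $n\geq N_f$ is used.
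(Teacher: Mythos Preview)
This paper does not contain a proof of the stated theorem: it is quoted in the introduction as a result ``established in \cite{LZ23b}'', and the present paper's contribution is the genericity statement (Theorem~\ref{thmSNIGeneric}) about the $\alpha$-strong non-integrability condition, not the Prime Orbit Theorem itself. So there is no proof here to compare your sketch against.

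That said, your outline is a faithful high-level summary of the strategy carried out in \cite{LZ23a, LZ23b}: reduce from the chordal to a visual metric via the bi-H\"older quasisymmetry, encode the orbit count in a dynamical zeta/Dirichlet series, analyze it through the spectral theory of the Ruelle transfer operators $\mathcal{L}_{-s\Phi}$, use Wiener--Ikehara for~(i) with the Liv\v{s}ic-type dichotomy identifying poles on $\RE s = s_0$ with cohomology to a constant, and for~(ii) push the analytic continuation past the critical line via a Dolgopyat-type estimate whose oscillatory cancellation is supplied by the $\alpha$-strong non-integrability condition. Your remarks about the role of $N_f$ (securing an $f^n$-invariant Jordan curve and hence a good Markov structure), about running the Dolgopyat argument through the Bonk--Meyer tile combinatorics rather than through smooth distortion bounds, and about the discrepancy between weighted periodic-point sums and operator traces all match the technical themes of \cite{LZ23b}. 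None of this, however, is reproved in the present paper.
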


For the definition of eventually positive functions, see Definition~\ref{defEventuallyPositive}.

M.~Bonk, D.~Meyer \cite{BM10, BM17} and P.~Ha\"issinsky, K.~M.~Pilgrim \cite{HP09} proved that an expanding Thurston map is conjugate to a rational map if and only if the sphere $(S^2,d)$ equipped with a visual metric $d$ is quasisymmetrically equivalent to the Riemann sphere $\widehat\C$ equipped with the chordal metric. The quasisymmetry cannot be promoted to Lipschitz equivalence due to the non-uniform expansion of Thurston maps. There exist expanding Thurston maps not conjugate to rational Thurston maps (e.g.\ ones with periodic critical points). The following theorem from \cite[Theorem~C]{LZ24b} applied to all expanding Thurston maps, which form the most general setting in this series of papers.

\begin{theorem*}[Prime Orbit Theorems for expanding Thurston maps]  \label{thmPrimeOrbitTheorem}
	Let $f\: S^2 \rightarrow S^2$ be an expanding Thurston map, and $d$ be a visual metric on $S^2$ for $f$. Let $\phi \in \Holder{\alpha}(S^2,d)$ be an eventually positive real-valued H\"{o}lder continuous function with an exponent $\alpha\in (0,1]$. Denote by $s_0$ the unique positive number with topological pressure $P(f,-s_0 \phi) = 0$. Then there exists $N_f\in\N$ depending only on $f$ such that for each $n\in \N$ with $n\geq N_f$, the following statements hold for $F\coloneqq f^n$ and $\Phi\coloneqq \sum_{i=0}^{n-1} \phi \circ f^i$:
	
	\begin{enumerate}
		\smallskip
		\item[(i)] $\pi_{F,\Phi}(T) \sim \operatorname{Li}\bigl( e^{s_0 T} \bigr)$ as $T \to + \infty$ if and only if $\phi$ is not cohomologous to a constant in the space $\CCC( S^2 )$ of real-valued continuous functions on $S^2$.
		
		\smallskip
		\item[(ii)] Assume that $\phi$ satisfies the $\alpha$-strong non-integrability condition. Then there exists a constant $\delta \in (0, s_0)$ such that
		$\pi_{F,\Phi}(T) = \operatorname{Li}\bigl( e^{s_0 T} \bigr)  + \bigO \bigl( e^{(s_0 - \delta)T} \bigr)$ as $T \to + \infty$.
	\end{enumerate}
\end{theorem*}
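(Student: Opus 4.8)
The plan is to deduce both statements from the analytic behaviour of a dynamical zeta function (equivalently, a dynamical Dirichlet series) attached to $F=f^{n}$ and $\Phi$, controlled through the spectral theory of the Ruelle operator $\RR_{-s\Phi}$; this follows the strategy of \cite{Na05,OW17} for hyperbolic rational maps, now adapted to the non-uniformly expanding, non-smooth setting. First I would invoke the thermodynamic formalism for expanding Thurston maps: for a visual metric $d$ and exponent $\alpha$, the operator $\RR_{-s\phi}$ acts boundedly on $\Holder{\alpha}(S^2,d)$, the function $s\mapsto P(f,-s\phi)$ is real-analytic, convex, and (since $\phi$ is eventually positive and $P(f,0)=\log(\deg f)>0$) strictly decreasing with a unique zero $s_0>0$; moreover for real $s$ near $s_0$ the operator has a spectral gap, with a simple leading eigenvalue $e^{P(f,-s\phi)}$ isolated from the rest of its spectrum. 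I would then fix $N_f$ large enough that, for every $n\ge N_f$, the iterate $F=f^{n}$ has a tile decomposition with the combinatorial regularity needed below; in particular $\Phi>0$ everywhere, $F$ is topologically exact on $1$-tiles, and the $\alpha$-strong non-integrability of $\phi$ with respect to $f$ promotes to $(F,\Phi)$.

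Next I would express the counting function analytically. Using the tile decomposition of $F$ as a substitute for a Markov partition, the weighted periodic-orbit sums $\sum_{F^{m}x=x}\exp\bigl(\sum_{i=0}^{m-1}(-s\Phi)(F^{i}x)\bigr)$ are related to iterates of $\RR_{-s\Phi}$, so that the dynamical zeta function $\zeta_{F,\Phi}(s)=\prod_{\tau\in\Orb(F)}\bigl(1-e^{-s\,l_{F,\Phi}(\tau)}\bigr)^{-1}$ and the dynamical Dirichlet series $\DS_{F,\Phi}(s)=\sum_{\tau\in\Orb(F)}l_{F,\Phi}(\tau)e^{-s\,l_{F,\Phi}(\tau)}$ both converge for $\RE(s)>s_0$, and their singularities govern the asymptotics of $\pi_{F,\Phi}$ via Tauberian machinery. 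The spectral gap, together with a resolvent argument (rather than a Fredholm-determinant one, since $\Phi$ is merely Hölder), shows that $\zeta_{F,\Phi}$ extends holomorphically and zero-free to a neighbourhood of $\{\RE(s)\ge s_0\}\setminus\{s_0\}$ with a simple pole at $s_0$ --- \emph{provided} there is no pole on the line $\RE(s)=s_0$. A pole at $s_0+\I t_0$ with $t_0\ne 0$ would force $1$ to lie in the spectrum of $\RR_{-(s_0+\I t_0)\Phi}$; a Liv\v{s}ic-type analysis of the peripheral spectrum shows this occurs precisely when $\phi$ is co-homologous to a constant in $\CCC(S^2)$. Feeding this dichotomy into the Wiener--Ikehara Tauberian theorem applied to $-\zeta_{F,\Phi}'/\zeta_{F,\Phi}$ (equivalently, to $\DS_{F,\Phi}$) yields statement~(i).

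For statement~(ii) I would upgrade the qualitative continuation to a quantitative one on a vertical strip $\RE(s)>s_0-\delta$: $\zeta_{F,\Phi}$ holomorphic and zero-free there apart from the pole at $s_0$, with at most polynomial growth in $\abs{\operatorname{Im}(s)}$. The engine is a Dolgopyat-type cancellation estimate for the twisted operators: there exist $\eta>0$, $C\ge 1$, $\rho\in(0,1)$, $t_1>0$ with $\Norm{\RR_{-s\Phi}^{m}}_{\alpha}\le C\rho^{m}(1+\abs{t})$ whenever $s=\sigma+\I t$, $\abs{\sigma-s_0}<\eta$, and $\abs{t}\ge t_1$. This is exactly where the $\alpha$-strong non-integrability condition is used: it quantifies, in the visual metric, the failure of the Birkhoff sums of $\phi$ along distinct inverse branches of $F^{m}$ to coincide, producing the oscillation of $e^{-\I t\Phi}$ needed for $L^{2}$-contraction; combined with bounded-distortion estimates for the tiles one iterates this to the stated norm bound. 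With holomorphy and polynomial bounds in hand, a contour-shift (explicit-formula) argument applied to the Chebyshev-type function $\psi_{F,\Phi}(T)=\sum_{k\ge1}\sum_{\tau:\,k\,l_{F,\Phi}(\tau)\le T}l_{F,\Phi}(\tau)$ yields $\psi_{F,\Phi}(T)=e^{s_0 T}+\bigO\bigl(e^{(s_0-\delta_1)T}\bigr)$, and partial summation converts this into $\pi_{F,\Phi}(T)=\operatorname{Li}\bigl(e^{s_0 T}\bigr)+\bigO\bigl(e^{(s_0-\delta)T}\bigr)$ after possibly shrinking $\delta$.

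The main obstacle is the Dolgopyat-type estimate. An expanding Thurston map is expanding only in the coarse sense of a visual metric, with genuine breakdown of local bi-Lipschitz regularity at critical points, and there is neither a smooth structure nor a canonical Markov partition; the combinatorics of the tile decomposition must do the work of both, so the distortion control for compositions of inverse branches, and the precise way the $\alpha$-strong non-integrability of $\phi$ feeds into the oscillatory-sum estimate, have to be extracted from the visual-metric geometry developed earlier in the series. Propagating the resulting $L^{2}$-contraction uniformly in $t$ through the iteration, while keeping the constants independent of $\sigma$ in a strip around $s_0$, is the technical heart of the argument. A more routine but necessary point is the choice of $N_f$: one must pass to an iterate for which the tile decomposition, topological exactness on $1$-tiles, positivity of $\Phi$, and the transfer of strong non-integrability from $(f,\phi)$ to $(F,\Phi)$ all hold simultaneously.
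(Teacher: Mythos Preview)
The paper does not contain a proof of this theorem at all: it is quoted verbatim from the second paper of the series \cite{LZ23b} as motivation for why the $\alpha$-strong non-integrability condition matters. The present paper's only contribution is Theorem~A (genericity of that condition), proved via Theorem~\ref{thmPerturbToStrongNonIntegrable} and the short argument in Section~\ref{sctGeneric}; the Prime Orbit Theorem itself is simply cited. So there is no ``paper's own proof'' against which to compare your proposal.

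That said, your outline is a plausible sketch of what \cite{LZ23b} actually does: thermodynamic formalism for $\RR_{-s\phi}$ on $\Holder{\alpha}(S^2,d)$, analytic continuation of the dynamical zeta function, Wiener--Ikehara for~(i), and a Dolgopyat-type cancellation bound driven by the $\alpha$-strong non-integrability condition for~(ii), with the visual-metric tile combinatorics replacing a Markov partition. You have correctly identified the main obstacle (the Dolgopyat estimate in the absence of smoothness and uniform expansion) and the role of $N_f$ (passing to an iterate with an invariant Jordan curve and the other needed regularities). One point worth flagging: the Ruelle operator in this setting is not run on all of $S^2$ directly but rather through a ``split'' version tied to the black/white tile structure (cf.\ the title of \cite{LZ23b}), so the passage from spectral information to zeta-function analyticity is somewhat more delicate than in the classical Markov case. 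But as a high-level roadmap your proposal matches the intended strategy; it just belongs to the previous paper, not this one.
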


Note that $\lim_{y\to+\infty}   \operatorname{Li}(y) / ( y / \log y )  = 1$, thus we also get $\pi_{F,\Phi}(T) \sim   e^{s_0 T} \big/ ( s_0 T)$ as $T\to + \infty$. 

%We remark that our proofs can be modified to derive equidistribution of holonomies similar to the corresponding result in \cite{OW17}, but we choose to omit them in order to emphasize our new ideas and to limit the length of this paper.

The $\alpha$-strong non-integrability condition in our settings above was inspired by the work of D.~Dolgopyat \cite{Do00} on exponentially mixing flows. As mentioned in \cite[Section~1]{Do00}, the idea of D.~Dolgopyat to work with his strong non-integrability condition in \cite{Do00} goes back to the work of W.~Parry and M.~Pollicott \cite{PP97} on the stability of mixing for compact group extensions over symbolic subshifts of finite type. In \cite{Do00}, D.~Dolgopyat established that the generic suspension flows over subshifts of finite type are stably exponential mixing, but remarked (\cite[Section~1]{Do00}) that this approach does not verify the stability of exponential mixing for smooth Axiom A systems. Notably, the result of W.~Parry and M.~Pollicott also influenced the work of M.~J.~Field, I.~Melbourne, and A.~T\"orok \cite{FMT07} on the stability of mixing and rapid mixing for Axiom A systems.

In this paper, we aim to demonstrate the genericity of the $\alpha$-strong non-integrability condition in our settings, thereby establishing the genericity of the prime orbit theorems with exponential error bounds above.

The following theorem is the primary goal of the current paper. See Definition~\ref{defStrongNonIntegrability} for the notion of the $\alpha$-strong non-integrability condition.

\begin{thmx}[Genericity]     \label{thmSNIGeneric}
	Let $f\: S^2 \rightarrow S^2$ be an expanding Thurston map, and $d$ be a visual metric on $S^2$ for $f$. Fix $\alpha\in(0,1]$. The space $\Holder{\alpha}(S^2,d)$ of real-valued H\"{o}lder continuous functions with an exponent $\alpha$ is equipped with the H\"{o}lder norm $\Hnorm{\alpha}{\cdot}{(S^2,d)}$. Let $\mathcal{S}^\alpha$ be the subset of  $\Holder{\alpha}(S^2,d)$ consisting of functions satisfying the $\alpha$-strong non-integrability condition in the sense of Definition~\ref{defStrongNonIntegrability}.
	
	Then $\mathcal{S}^\alpha$ is open in $\Holder{\alpha}(S^2,d)$. Moreover, the following statements hold:
	\begin{enumerate}
		\smallskip
		\item[(i)] $\mathcal{S}^\alpha$ is an open dense subset of $\Holder{\alpha}(S^2,d)$ if $\alpha \in (0,1)$.
		
		\smallskip
		\item[(ii)] $\mathcal{S}^1$ is an open dense subset of $\Holder{1}(S^2,d)$ if the expansion factor $\Lambda$ of $d$ is not equal to the combinatorial expansion factor $\Lambda_0(f)$ of $f$.
	\end{enumerate}
\end{thmx}

The H\"{o}lder norm $\Hnorm{\alpha}{\cdot}{(S^2,d)}$ is recalled in Section~\ref{sctNotation}. The definition of the combinatorial expansion factor $\Lambda_0(f)$ of $f$ is given in Section~\ref{sctGeneric}. See \cite[Chapter~16]{BM17} for a more detailed discussion on $\Lambda_0(f)$. In particular, we always have $\Lambda \leq \Lambda_0(f)$.

We note that for each $\alpha\in(0,1]$, the subset of $\phi \in \Holder{\alpha}(S^2,d)$ that are eventually positive is open in $\Holder{\alpha}(S^2,d)$ with respect to either the uniform norm or the H\"older norm.

\smallskip

We will now give a brief description of the structure of this paper.

After fixing some notation in Section~\ref{sctNotation}, we give a review of basic definitions and results in Section~\ref{sctPreliminaries}. A constructive proof of the density of functions satisfying the $\alpha$-strong integrability condition (Theorem~\ref{thmPerturbToStrongNonIntegrable}) occupies a significant part of Section~\ref{sctDensity}. Finally, in Section~\ref{sctGeneric}, we complete the proof of Theorem~\ref{thmSNIGeneric}.

\subsection*{Acknowledgments} 
The first-named author is grateful to the Institute for Computational and Experimental Research in Mathematics (ICERM) at Brown University for the hospitality during his stay from February to May 2016, where he learned about this area of research while participating in the Semester Program ``Dimension and Dynamics'' as a research postdoctoral fellow. The authors want to express their gratitude to Mark~Pollicott for his introductory discussion on dynamical zeta functions and Prime Orbit Theorems in ICERM and many conversations since then, and to Hee~Oh for her beautiful talks on her work and helpful comments, while the first-named author also wants to thank Dennis~Sullivan for his encouragement to initiating this project, and Jianyu~Chen and  Polina~Vytnova for interesting discussions on related areas of research. Part of this work was done during the first-named author's stay at the Institute for Mathematical Sciences (IMS) at Stony Brook University as a postdoctoral fellow. He wants to thank IMS and his postdoctoral advisor Mikhail~Yu.~Lyubich for the great support and hospitality. The authors also would like to thank the anonymous referees for valuable suggestions to improve the paper. Z.~Li was partially supported by NSF DMS-1301602, DMS-1600519, NSFC Nos.~12101017, 12090010, 12090015, and BJNSF No.~1214021.

\section{Notation} \label{sctNotation}
Let $\C$ be the complex plane and $\widehat{\C}$ be the Riemann sphere. The cardinality of a set $A$ is denoted by $\card{A}$. 

Consider real-valued functions $u$, $v$, and $w$ on $(0,+\infty)$. We write $u(T) \sim v(T)$ as $T\to +\infty$ if $\lim_{T\to+\infty} \frac{u(T)}{v(T)} = 1$, and write $u(T) = v(T) + \bigO ( w(T) )$ as $T\to +\infty$ if $\limsup_{T\to+\infty} \Absbig{ \frac{u(T) - v(T) }{w(T)} } < +\infty$.

Consider a map $f\: X\rightarrow X$ on a set $X$. For each $x \in X$, we call the set $\{x, \, f(x), \, f^2(x), \dots\}$  an \emph{orbit} (starting from $x$). If an orbit has finite cardinality, then it is called a \emph{primitive periodic orbit}. The set of all primitive periodic orbits of $f$ is denoted by $\Orb(f)$.

Given a complex-valued function $\varphi\: X\rightarrow \C$, we write
\begin{equation}    \label{eqDefSnPt}
S_n \varphi (x)  = S_{n}^f \varphi (x)  \coloneqq \sum_{j=0}^{n-1} \varphi(f^j(x)) 
\end{equation}
for $x\in X$ and $n\in\N_0$. The superscript $f$ is often omitted when the map $f$ is clear from the context. Note that when $n=0$, by definition, we always have $S_0 \varphi = 0$.

Let $(X,d)$ be a metric space. For each subset $Y\subseteq X$, we denote the diameter of $Y$ by $\diam_d(Y) \coloneqq \sup\{d(x,y) : x, \, y\in Y\}$ and the interior of $Y$ by $\inter Y$. For each $r>0$ and each $x\in X$, we denote the open (resp.\ closed) ball of radius $r$ centered at $x$ by $B_d(x, r)$ (resp.\ $\overline{B_d}(x,r)$).

The space of real-valued H\"{o}lder continuous functions with an exponent $\alpha\in (0,1]$ on a compact metric space $(X,d)$ is denoted by $\Holder{\alpha}(X,d)$. For each $\psi\in\Holder{\alpha}(X,d)$, we denote
\begin{equation}   \label{eqDef|.|alpha}
\Hseminorm{\alpha,\,(X,d)}{\psi} \coloneqq \sup \{ \abs{\psi(x)- \psi(y)} / d(x,y)^\alpha  :  x, \, y\in X, \,x\neq y \},
\end{equation}
and the \emph{H\"{o}lder norm} of $\psi$ is defined as
\begin{equation}  \label{eqDefNormalizedHolderNorm}
\Hnorm{\alpha}{\psi}{(X,d)} \coloneqq  \Hseminorm{\alpha,\,(X,d)}{\psi}  + \Norm{\psi}_{\CCC^0(X)}.
\end{equation}

\section{Preliminaries}  \label{sctPreliminaries}

\subsection{Thurston maps} \label{subsctThurstonMap}
In this subsection, we go over some key concepts and results on Thurston maps, and expanding Thurston maps in particular. For a more thorough treatment of the subject, we refer to \cite{BM17}.

Let $S^2$ denote an oriented topological $2$-sphere. A continuous map $f\:S^2\rightarrow S^2$ is called a \defn{branched covering map} on $S^2$ if for each point $x\in S^2$, there exists a positive integer $d\in \N$, open neighborhoods $U$ of $x$ and $V$ of $y=f(x)$, open neighborhoods $U'$ and $V'$ of $0$ in $\widehat{\C}$, and orientation-preserving homeomorphisms $\varphi\:U\rightarrow U'$ and $\eta\:V\rightarrow V'$ such that $\varphi(x)=0$, $\eta(y)=0$, and
\begin{equation*}
(\eta\circ f\circ\varphi^{-1})(z)=z^d
\end{equation*}
for each $z\in U'$. The positive integer $d$ above is called the \defn{local degree} of $f$ at $x$ and is denoted by $\deg_f (x)$.

The \defn{degree} of $f$ is
\begin{equation}   \label{eqDeg=SumLocalDegree}
\deg f=\sum_{x\in f^{-1}(y)} \deg_f (x)
\end{equation}
for $y\in S^2$ and is independent of $y$. If $f\:S^2\rightarrow S^2$ and $g\:S^2\rightarrow S^2$ are two branched covering maps on $S^2$, then so is $f\circ g$, and
\begin{equation} \label{eqLocalDegreeProduct}
 \deg_{f\circsmall g}(x) = \deg_g(x)\deg_f(g(x)), \qquad \text{for each } x\in S^2,
\end{equation}   
and moreover, 
\begin{equation}  \label{eqDegreeProduct}
\deg(f\circ g) =  (\deg f)( \deg g).
\end{equation}

A point $x\in S^2$ is a \defn{critical point} of $f$ if $\deg_f(x) \geq 2$. The set of critical points of $f$ is denoted by $\crit f$. A point $y\in S^2$ is a \defn{postcritical point} of $f$ if $y = f^n(x)$ for some $x\in\crit f$ and $n\in\N$. The set of postcritical points of $f$ is denoted by $\post f$. Note that $\post f=\post f^n$ for all $n\in\N$.

\begin{definition} [Thurston maps] \label{defThurstonMap}
A Thurston map is a branched covering map $f\:S^2\rightarrow S^2$ on $S^2$ with $\deg f\geq 2$ and $\card(\post f)<+\infty$.
\end{definition}

We now recall the notation for cell decompositions of $S^2$ used in \cite{BM17} and \cite{Li17}. A \defn{cell of dimension $n$} in $S^2$, $n \in \{1, \, 2\}$, is a subset $c\subseteq S^2$ that is homeomorphic to the closed unit ball $\overline{\B^n}$ in $\R^n$. We define the \defn{boundary of $c$}, denoted by $\partial c$, to be the set of points corresponding to $\partial\B^n$ under such a homeomorphism between $c$ and $\overline{\B^n}$. The \defn{interior of $c$} is defined to be $\inte (c) = c \setminus \partial c$. For each point $x\in S^2$, the set $\{x\}$ is considered as a \defn{cell of dimension $0$} in $S^2$. For a cell $c$ of dimension $0$, we adopt the convention that $\partial c=\emptyset$ and $\inte (c) =c$. 

We record the following three definitions from \cite{BM17}.

\begin{definition}[Cell decompositions]\label{defcelldecomp}
Let $\DD$ be a collection of cells in $S^2$.  We say that $\DD$ is a \defn{cell decomposition of $S^2$} if the following conditions are satisfied:

\begin{itemize}

\smallskip
\item[(i)]
the union of all cells in $\DD$ is equal to $S^2$,

\smallskip
\item[(ii)] if $c\in \DD$, then $\partial c$ is a union of cells in $\DD$,

\smallskip
\item[(iii)] for $c_1, \, c_2 \in \DD$ with $c_1 \neq c_2$, we have $\inte (c_1) \cap \inte (c_2)= \emptyset$,  

\smallskip
\item[(iv)] every point in $S^2$ has a neighborhood that meets only finitely many cells in $\DD$.

\end{itemize}
\end{definition}

\begin{definition}[Refinements]\label{defrefine}
Let $\DD'$ and $\DD$ be two cell decompositions of $S^2$. We
say that $\DD'$ is a \defn{refinement} of $\DD$ if the following conditions are satisfied:
\begin{itemize}

\smallskip
\item[(i)] every cell $c\in \DD$ is the union of all cells $c'\in \DD'$ with $c'\subseteq c$,

\smallskip
\item[(ii)] for every cell $c'\in \DD'$ there exists a cell $c\in \DD$ with $c'\subseteq c$.

\end{itemize}
\end{definition}

\begin{definition}[Cellular maps and cellular Markov partitions]\label{defcellular}
Let $\DD'$ and $\DD$ be two cell decompositions of  $S^2$. We say that a continuous map $f \: S^2 \rightarrow S^2$ is \defn{cellular} for  $(\DD', \DD)$ if for every cell $c\in \DD'$, the restriction $f|_c$ of $f$ to $c$ is a homeomorphism of $c$ onto a cell in $\DD$. We say that $(\DD',\DD)$ is a \defn{cellular Markov partition} for $f$ if $f$ is cellular for $(\DD',\DD)$ and $\DD'$ is a refinement of $\DD$.
\end{definition}

Let $f\:S^2 \rightarrow S^2$ be a Thurston map, and $\CC\subseteq S^2$ be a Jordan curve containing $\post f$. Then the pair $f$ and $\CC$ induces natural cell decompositions $\DD^n(f,\CC)$ of $S^2$, for $n\in\N_0$, in the following way:

By the Jordan curve theorem, the set $S^2\setminus\CC$ has two connected components. We call the closure of one of them the \defn{white $0$-tile} for $(f,\CC)$, denoted by $X^0_\w$, and the closure of the other the \defn{black $0$-tile} for $(f,\CC)$, denoted by $X^0_\b$. The set of \defn{$0$-tiles} is $\X^0(f,\CC) \coloneqq \bigl\{ X_\b^0, \, X_\w^0 \bigr\}$. The set of \defn{$0$-vertices} is $\V^0(f,\CC) \coloneqq \post f$. We set $\overline\V^0(f,\CC) \coloneqq \{ \{x\}  :  x\in \V^0(f,\CC) \}$. The set of \defn{$0$-edges} $\E^0(f,\CC)$ is the set of the closures of the connected components of $\CC \setminus  \post f$. Then we get a cell decomposition 
\begin{equation*}
\DD^0(f,\CC) \coloneqq \X^0(f,\CC) \cup \E^0(f,\CC) \cup \overline\V^0(f,\CC)
\end{equation*}
of $S^2$ consisting of \emph{cells of level $0$}, or \defn{$0$-cells}.

We can recursively define unique cell decompositions $\DD^n(f,\CC)$, $n\in\N$, consisting of \defn{$n$-cells} such that $f$ is cellular for $(\DD^{n+1}(f,\CC),\DD^n(f,\CC))$. We refer to \cite[Lemma~5.12]{BM17} for more details. We denote by $\X^n(f,\CC)$ the set of $n$-cells of dimension 2, called \defn{$n$-tiles}; by $\E^n(f,\CC)$ the set of $n$-cells of dimension 1, called \defn{$n$-edges}; by $\overline\V^n(f,\CC)$ the set of $n$-cells of dimension 0; and by $\V^n(f,\CC)$ the set $\bigl\{x :  \{x\}\in \overline\V^n(f,\CC)\bigr\}$, called the set of \defn{$n$-vertices}. The \defn{$k$-skeleton}, for $k\in\{0, \, 1, \, 2\}$, of $\DD^n(f,\CC)$ is the union of all $n$-cells of dimension $k$ in this cell decomposition. 

We record Proposition~5.16 of \cite{BM17} here in order to summarize properties of the cell decompositions $\DD^n(f,\CC)$ defined above.

\begin{prop}[M.~Bonk \& D.~Meyer \cite{BM17}] \label{propCellDecomp}
Let $k, \, n\in \N_0$, let   $f\: S^2\rightarrow S^2$ be a Thurston map,  $\CC\subseteq S^2$ be a Jordan curve with $\post f \subseteq \CC$, and   $m=\card(\post f)$. 
 
\smallskip
\begin{itemize}

\smallskip
\item[(i)] The map  $f^k$ is cellular for $\bigl( \DD^{n+k}(f,\CC), \DD^n(f,\CC) \bigr)$. In particular, if  $c$ is any $(n+k)$-cell, then $f^k(c)$ is an $n$-cell, and $f^k|_c$ is a homeomorphism of $c$ onto $f^k(c)$.

\smallskip
\item[(ii)]  Let  $c$ be  an $n$-cell.  Then $f^{-k}(c)$ is equal to the union of all 
$(n+k)$-cells $c'$ with $f^k(c')=c$.

\smallskip
\item[(iii)] The $1$-skeleton of $\DD^n(f,\CC)$ is  equal to  $f^{-n}(\CC)$. The $0$-skeleton of $\DD^n(f,\CC)$ is the set $\V^n(f,\CC)=f^{-n}(\post f )$, and we have $\V^n(f,\CC) \subseteq \V^{n+k}(f,\CC)$. 

\smallskip
\item[(iv)] $\card(\X^n(f,\CC))=2(\deg f)^n$,  $\card(\E^n(f,\CC))=m(\deg f)^n$,  and $\card (\V^n(f,\CC)) \leq m (\deg f)^n$.

\smallskip
\item[(v)] The $n$-edges are precisely the closures of the connected components of $f^{-n}(\CC)\setminus f^{-n}(\post f )$. The $n$-tiles are precisely the closures of the connected components of $S^2\setminus f^{-n}(\CC)$.

\smallskip
\item[(vi)] Every $n$-tile  is an $m$-gon, i.e., the number of $n$-edges and the number of $n$-vertices contained in its boundary are equal to $m$.  

\smallskip
\item[(vii)] Let $F\coloneqq f^k$ be an iterate of $f$ with $k \in \N$. Then $\DD^n(F,\CC) = \DD^{nk}(f,\CC)$.
\end{itemize}
\end{prop}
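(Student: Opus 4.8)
This is \cite[Proposition~5.16]{BM17}; the plan for a self-contained proof is the following. The engine is the elementary observation that every branch point of $f$ maps to a critical value, hence into $\post f$, so that $f$ restricts to a covering map $f\: S^2\setminus f^{-1}(\post f)\to S^2\setminus\post f$ of degree $\deg f$. With this I would induct on $n$. For $n=0$: the Jordan curve theorem furnishes the two $0$-tiles as the closures of the components of $S^2\setminus\CC$; since $\post f\subseteq\CC$ has $m$ points, $\CC\setminus\post f$ has $m$ components, whose closures are the $m$ $0$-edges, and together with the $m$ $0$-vertices these assemble into the cell decomposition $\DD^0(f,\CC)$, in which each $0$-tile visibly has $m$ boundary edges and $m$ boundary vertices. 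This gives (iv)--(vi) at level $0$.

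For the recursive step, assume $\DD^n(f,\CC)$ has been built. Over the interior of any $n$-cell of dimension $1$ or $2$ --- a simply connected open set disjoint from $\post f$, since $\post f\subseteq f^{-n}(\post f)$ lies in the $0$-skeleton of $\DD^n(f,\CC)$ --- the covering $f$ is trivial, so its preimage has exactly $\deg f$ components, each mapped homeomorphically onto it. Taking closures and checking that two such closed pieces can meet only along closures of pulled-back cells of strictly smaller dimension (here one uses that $f^{-n}(\CC)$ is a finite embedded graph and that path-lifting along $f$ preserves incidences) produces $\DD^{n+1}(f,\CC)$ and shows that $f$ is cellular for $(\DD^{n+1}(f,\CC),\DD^n(f,\CC))$; this is \cite[Lemma~5.12]{BM17}. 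Counting components in this step --- each $n$-tile acquires $\deg f$ preimage $(n{+}1)$-tiles and each $n$-edge $\deg f$ preimage $(n{+}1)$-edges --- gives $\card(\X^{n+1})=(\deg f)\card(\X^n)$ and $\card(\E^{n+1})=(\deg f)\card(\E^n)$, hence the exact formulas in (iv) by \eqref{eqDegreeProduct}, while $\card(\V^n)=\card(f^{-n}(\post f))\leq m(\deg f)^n$ follows from $\card(f^{-1}(y))\leq\deg f$.

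Granting cellularity of $f$ for consecutive levels, the rest is formal. Statement (i) for general $k$ comes from composing cellular maps together with \eqref{eqLocalDegreeProduct}--\eqref{eqDegreeProduct}; (ii) is the general fact that a cellular map carries cells to cells and that the full preimage of a cell is the union of the cells mapped onto it; for (iii), a cellular map pulls back the $j$-skeleton to the $j$-skeleton, so inductively the $1$-skeleton of $\DD^n(f,\CC)$ is $f^{-n}(\CC)$ and its $0$-skeleton is $f^{-n}(\post f)=\V^n(f,\CC)$, while $\V^n(f,\CC)\subseteq\V^{n+k}(f,\CC)$ follows from $\post f\subseteq f^{-1}(\post f)$ (a consequence of the forward invariance of $\post f$); (v) is the description of the $1$- and $2$-cells of any cell decomposition in terms of its $1$-skeleton; (vi) holds because by (i) the map $f^n$ restricts to a cellular homeomorphism from an $n$-tile onto a $0$-tile, transporting its $m$ boundary edges and $m$ boundary vertices; and (vii) follows from the uniqueness in the recursive construction together with the fact that one pull-back under $f^k$ coincides with $k$ successive pull-backs under $f$.

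The single substantive point is the recursive step: verifying that the pulled-back closed cells really form a cell decomposition --- in particular that they overlap only along lower-dimensional pulled-back cells --- and that $f$ is cellular for the resulting pair. Once that local-to-global lifting argument is in place (as in \cite[Section~5]{BM17}), every remaining clause reduces to induction and multiplicativity of the degree.
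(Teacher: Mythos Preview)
Your proposal is correct and follows the standard approach of the original reference \cite[Proposition~5.16]{BM17}. Note, however, that the present paper does not supply its own proof of this proposition: it merely records the statement from \cite{BM17} for later use, so there is no in-paper argument to compare against beyond the citation you already identified.
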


We also note that for each $n$-edge $e\in\E^n(f,\CC)$, $n\in\N_0$, there exist exactly two distinct $n$-tiles $X, \, X'\in\X^n(f,\CC)$ that contain $e$.

For $n\in \N_0$, we define the \defn{set of black $n$-tiles} as
\begin{equation*}
\X_\b^n(f,\CC) \coloneqq \bigl\{X\in\X^n(f,\CC)  :  f^n(X)=X_\b^0 \bigr\},
\end{equation*}
and the \defn{set of white $n$-tiles} as
\begin{equation*}
\X_\w^n(f,\CC) \coloneqq \bigl\{X\in\X^n(f,\CC) :  f^n(X)=X_\w^0 \bigr\}.
\end{equation*}
It follows immediately from Proposition~\ref{propCellDecomp} that
\begin{equation}   \label{eqCardBlackNTiles}
\card ( \X_\b^n(f,\CC) ) = \card  (\X_\w^n(f,\CC) ) = (\deg f)^n 
\end{equation}
for each $n\in\N_0$.

From now on, if the map $f$ and the Jordan curve $\CC$ are apparent from the context, we will sometimes omit $(f,\CC)$ in the notation above.

If we fix the cell decomposition $\DD^n(f,\CC)$, $n\in\N_0$, we can define for each $v\in \V^n$ the \defn{$n$-flower of $v$} as
\begin{equation}   \label{defFlower}
W^n(v) \coloneqq \bigcup  \{\inte (c)  :  c\in \DD^n,\, v\in c \}.
\end{equation}
Note that flowers are open (in the standard topology on $S^2$). Let $\overline{W}^n(v)$ be the closure of $W^n(v)$. We define the \defn{set of all $n$-flowers} by
\begin{equation}   \label{defSetNFlower}
\W^n \coloneqq \{W^n(v)  :  v\in\V^n\}.
\end{equation}
\begin{rem}  \label{rmFlower}
For $n\in\N_0$ and $v\in\V^n$, we have 
\begin{equation*}
\overline{W}^n(v)=X_1\cup X_2\cup \cdots \cup X_m,
\end{equation*}
where $m \coloneqq 2\deg_{f^n}(v)$, and $X_1, X_2, \dots X_m$ are all the $n$-tiles that contain $v$ as a vertex (see \cite[Lemma~5.28]{BM17}). Moreover, each flower is mapped under $f$ to another flower in such a way that is similar to the map $z\mapsto z^k$ on the complex plane. More precisely, for $n\in\N_0$ and $v\in \V^{n+1}$, there exist orientation preserving homeomorphisms $\varphi\: W^{n+1}(v) \rightarrow \D$ and $\eta\: W^{n}(f(v)) \rightarrow \D$ such that $\D$ is the unit disk on $\C$, $\varphi(v)=0$, $\eta(f(v))=0$, and 
\begin{equation*}
(\eta\circ f \circ \varphi^{-1}) (z) = z^k
\end{equation*}
for all $z\in \D$, where $k \coloneqq \deg_f(v)$. Let $\overline{W}^{n+1}(v)= X_1\cup X_2\cup \cdots \cup X_m$ and $\overline{W}^n(f(v))= X'_1\cup X'_2\cup \cdots \cup X'_{m'}$, where $X_1, X_2, \dots X_m$ are all the $(n+1)$-tiles that contain $v$ as a vertex, listed counterclockwise, and $X'_1, X'_2, \dots X'_{m'}$ are all the $n$-tiles that contain $f(v)$ as a vertex, listed counterclockwise, and $f(X_1)=X'_1$. Then $m= m'k$, and $f(X_i)=X'_j$ if $i\equiv j \pmod{k}$, where $k=\deg_f(v)$. (See also Case~3 of the proof of Lemma~5.24 in \cite{BM17} for more details.) In particular, $W^n(v)$ is simply connected.
\end{rem}

We denote, for each $x\in S^2$ and $n\in\Z$,
\begin{equation}  \label{defU^n}
U^n(x) \coloneqq \bigcup \{Y^n\in \X^n  :     \text{there exists } X^n\in\X^n  
                                        \text{ with } x\in X^n, \, X^n\cap Y^n \neq \emptyset  \}  
\end{equation}
if $n\geq 0$, and set $U^n(x) \coloneqq S^2$ otherwise. %We define the \defn{$n$-partition} $O_n$ of $S^2$ induced by $(f,\CC)$ as
%\begin{equation}   \label{defn-partition}
%O_n \coloneqq \{\inte (X^n) : X^n\in\X^n\} \cup \{\inte (e^n)  : e^n\in\E^n\} \cup \overline{\V}^n.
%\end{equation}

We can now give a definition of expanding Thurston maps.

\begin{definition} [Expansion] \label{defExpanding}
A Thurston map $f\:S^2\rightarrow S^2$ is called \defn{expanding} if there exists a metric $d$ on $S^2$ that induces the standard topology on $S^2$ and a Jordan curve $\CC\subseteq S^2$ containing $\post f$ such that 
\begin{equation*}
\lim_{n\to+\infty}\max \{\diam_d(X)  :  X\in \X^n(f,\CC)\}=0.
\end{equation*}
\end{definition}

\begin{rems}  \label{rmExpanding}
It is clear from Proposition~\ref{propCellDecomp}~(vii) and Definition~\ref{defExpanding} that if $f$ is an expanding Thurston map, so is $f^n$ for each $n\in\N$. We observe that being expanding is a topological property of a Thurston map and independent of the choice of the metric $d$ that generates the standard topology on $S^2$. By Lemma~6.2 in \cite{BM17}, it is also independent of the choice of the Jordan curve $\CC$ containing $\post f$. More precisely, if $f$ is an expanding Thurston map, then
\begin{equation*}
\lim_{n\to+\infty}\max \!\bigl\{ \! \diam_{\wt{d}}(X)  :  X\in \X^n\bigl(f,\wt\CC \hspace{0.5mm}\bigr)\hspace{-0.3mm} \bigr\}\hspace{-0.3mm}=0,
\end{equation*}
for each metric $\wt{d}$ that generates the standard topology on $S^2$ and each Jordan curve $\wt\CC\subseteq S^2$ that contains $\post f$.
\end{rems}

P.~Ha\"{\i}ssinsky and K.~M.~Pilgrim developed a notion of expansion in a more general context for finite branched coverings between topological spaces (see \cite[Sections~2.1 and~2.2]{HP09}). This applies to Thurston maps and their notion of expansion is equivalent to our notion defined above in the context of Thurston maps (see \cite[Proposition~6.4]{BM17}). Such concepts of expansion are natural analogs, in the contexts of finite branched coverings and Thurston maps, to some of the more classical versions, such as expansive homeomorphisms and forward-expansive continuous maps between compact metric spaces (see for example, \cite[Definition~3.2.11]{KH95}), and distance-expanding maps between compact metric spaces (see for example, \cite[Chapter~4]{PrU10}). Our notion of expansion is not equivalent to any such classical notion in the context of Thurston maps. One topological obstruction comes from the presence of critical points for (non-homeomorphic) branched covering maps on $S^2$. In fact, as mentioned in the introduction, there are subtle connections between our notion of expansion and some classical notions of weak expansion. More precisely, one can prove that an expanding Thurston map is asymptotically $h$-expansive if and only if it has no periodic points. Moreover, such a map is never $h$-expansive. Asymptotic $h$-expansiveness and $h$-expansiveness are two notions of weak expansion introduced by M.~Misiurewicz \cite{Mi73} and R.~Bowen \cite{Bow72}, respectively. See \cite{Li15} for details.

For an expanding Thurston map $f$, we can fix a particular metric $d$ on $S^2$ called a \emph{visual metric for $f$}. For the existence and properties of such metrics, see \cite[Chapter~8]{BM17}. For a visual metric $d$ for $f$, there exists a unique constant $\Lambda > 1$ called the \emph{expansion factor} of $d$ (see \cite[Chapter~8]{BM17} for more details). One major advantage of a visual metric $d$ is that in $(S^2,d)$, we have good quantitative control over the sizes of the cells in the cell decompositions discussed above. We summarize several results of this type (\cite[Proposition~8.4, Lemmas~8.10,~8.11]{BM17}) in the lemma below.

\begin{lemma}[M.~Bonk \& D.~Meyer \cite{BM17}]   \label{lmCellBoundsBM}
Let $f\:S^2 \rightarrow S^2$ be an expanding Thurston map, and $\CC \subseteq S^2$ be a Jordan curve containing $\post f$. Let $d$ be a visual metric on $S^2$ for $f$ with expansion factor $\Lambda>1$. Then there exist constants $C\geq 1$, $K\geq 1$, and $n_0\in\N_0$ with the following properties:
\begin{enumerate}
\smallskip
\item[(i)] $d(\sigma,\tau) \geq C^{-1} \Lambda^{-n}$ whenever $\sigma$ and $\tau$ are disjoint $n$-cells for $n\in \N_0$.

\smallskip
\item[(ii)] $C^{-1} \Lambda^{-n} \leq \diam_d(\tau) \leq C\Lambda^{-n}$ for all $n$-edges and all $n$-tiles $\tau$ for $n\in\N_0$.

\smallskip
\item[(iii)] $B_d(x,K^{-1} \Lambda^{-n} ) \subseteq U^n(x) \subseteq B_d(x, K\Lambda^{-n})$ for $x\in S^2$ and $n\in\N_0$.

\smallskip
\item[(iv)] $U^{n+n_0} (x)\subseteq B_d(x,r) \subseteq U^{n-n_0}(x)$ where $n= \lceil -\log r / \log \Lambda \rceil$ for $r>0$ and $x\in S^2$.

\smallskip
\item[(v)] For every $n$-tile $X^n\in\X^n(f,\CC)$, $n\in\N_0$, there exists a point $p\in X^n$ such that $B_d(p,C^{-1}\Lambda^{-n}) \subseteq X^n \subseteq B_d(p,C\Lambda^{-n})$.
\end{enumerate}

Conversely, if $\wt{d}$ is a metric on $S^2$ satisfying conditions \textnormal{(i)} and \textnormal{(ii)} for some constant $C\geq 1$, then $\wt{d}$ is a visual metric with expansion factor $\Lambda>1$.
\end{lemma}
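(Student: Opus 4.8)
The statement to be proved is the converse half of Lemma~\ref{lmCellBoundsBM}: a metric $\wt d$ on $S^2$ satisfying the cell estimates~(i) and~(ii) for the cell decompositions $\DD^n(f,\CC)$ of an expanding Thurston map $f$ (with some $C\geq1$ and the given $\Lambda>1$) is a visual metric for $f$ with expansion factor $\Lambda$. Recall from \cite[Chapter~8]{BM17} that this means two things: first, $\wt d$ induces the standard topology on $S^2$; second, there is a constant $C_0\geq1$ with $C_0^{-1}\Lambda^{-m_{f,\CC}(x,y)}\leq \wt d(x,y)\leq C_0\Lambda^{-m_{f,\CC}(x,y)}$ for all $x,y\in S^2$, where for distinct $x,y$ the combinatorial quantity $m_{f,\CC}(x,y)$ is the largest $n\in\N_0$ admitting $n$-tiles $X^n,Y^n\in\X^n(f,\CC)$ with $x\in X^n$, $y\in Y^n$, and $X^n\cap Y^n\neq\emptyset$. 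This largest $n$ is finite: if $\rho$ is any metric generating the standard topology, then $\max\{\diam_\rho(X):X\in\X^n\}\to0$ since $f$ is expanding, so for $n$ large no two touching $n$-tiles can contain $x$ and $y$ respectively; and it is $\geq0$ since the two $0$-tiles cover $S^2$ and meet along $\CC$. The plan is to read the two-sided estimate directly off~(i) and~(ii), and then deduce the topological claim using compactness.

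For the \emph{upper bound}, fix distinct $x,y$, set $n\coloneqq m_{f,\CC}(x,y)$, and pick $n$-tiles $X^n\ni x$, $Y^n\ni y$ with a common point $z\in X^n\cap Y^n$. Then by~(ii), $\wt d(x,y)\leq \wt d(x,z)+\wt d(z,y)\leq\diam_{\wt d}(X^n)+\diam_{\wt d}(Y^n)\leq 2C\Lambda^{-n}$. For the \emph{lower bound}, note that by maximality of $n$ there is no pair of intersecting $(n+1)$-tiles containing $x$ and $y$ respectively; since the $(n+1)$-tiles cover $S^2$ (Proposition~\ref{propCellDecomp}), any $(n+1)$-tile $X^{n+1}\ni x$ and any $(n+1)$-tile $Y^{n+1}\ni y$ are distinct (else $m_{f,\CC}(x,y)\geq n+1$) and hence disjoint $(n+1)$-cells, so~(i) gives $\wt d(x,y)\geq\dist_{\wt d}(X^{n+1},Y^{n+1})\geq C^{-1}\Lambda^{-(n+1)}$. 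Thus $(C\Lambda)^{-1}\Lambda^{-m_{f,\CC}(x,y)}\leq\wt d(x,y)\leq2C\Lambda^{-m_{f,\CC}(x,y)}$ for all distinct $x,y$, i.e.\ the required estimate with $C_0\coloneqq\max\{2C,\,C\Lambda\}$.

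It remains to show $\wt d$ induces the standard topology. Since $(S^2,\wt d)$ is a Hausdorff (metric) space and $S^2$ with the standard topology is compact, it suffices to show that the identity map from $S^2$ with the standard topology to $(S^2,\wt d)$ is continuous, for a continuous bijection from a compact space onto a Hausdorff space is a homeomorphism. Fix $x\in S^2$ and $\varepsilon>0$, and choose $n$ with $2C\Lambda^{-n}<\varepsilon$ (here $\Lambda>1$ is used). Because $f$ is expanding, $\{U^n(x):n\in\N_0\}$ is a neighborhood basis of $x$ for the standard topology: each $U^n(x)$ is a finite union of sets $X^n\cup Y^n$ with $X^n\ni x$, hence has $\rho$-diameter at most $2\max\{\diam_\rho(X):X\in\X^n\}\to0$, and each $U^n(x)$ contains a standard-open neighborhood of $x$ (see \cite{BM17}). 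So there is a standard neighborhood $V$ of $x$ with $V\subseteq U^n(x)$, and for $y\in V$ the definition of $U^n(x)$ supplies $n$-tiles $X^n\ni x$, $Y^n\ni y$ with $X^n\cap Y^n\neq\emptyset$, whence $\wt d(x,y)\leq2C\Lambda^{-n}<\varepsilon$ as in the upper bound above. This proves continuity, so $\wt d$ induces the standard topology and is a visual metric with expansion factor $\Lambda$. (The value $\Lambda>1$ is part of the data, being the constant appearing in~(i) and~(ii); note also $\Lambda<1$ is impossible, since by~(ii) the diameters of $n$-tiles would then be unbounded while $\diam_{\wt d}(S^2)<\infty$.)

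I do not expect a serious obstacle: the argument reduces to the definition of a visual metric and the two inequalities, obtained directly from~(i) and~(ii). The two places needing a little care are matching the multiplicative constant --- harmless, since the equivalent normalizations of $m_{f,\CC}$ in \cite{BM17} differ only by an additive $O(1)$ and so change only $C_0$ --- and the topological step, where one genuinely uses that $f$ is expanding (to know $\{U^n(x)\}_n$ is a standard neighborhood basis of $x$); this is the single input that is not purely formal in conditions~(i)--(ii).
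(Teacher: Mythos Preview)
The paper does not contain a proof of this lemma; it is quoted from \cite{BM17} (specifically \cite[Proposition~8.4, Lemmas~8.10,~8.11]{BM17}) and stated without proof, so there is no argument in the paper to compare yours against.

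That said, your argument for the converse direction is correct and is essentially the standard one. One cosmetic point: in the lower-bound step you write that the two $(n+1)$-tiles are ``distinct \dots\ and hence disjoint $(n+1)$-cells''; distinctness alone does not give disjointness, but you had already noted the real reason one line earlier (maximality of $n$ forbids any intersecting pair of $(n+1)$-tiles through $x$ and $y$), so the logic is sound even if the word ``hence'' is slightly misleading. The topological step via the compact--Hausdorff trick is fine and is exactly where the expanding hypothesis enters.
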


Recall that $U^n(x)$ is defined in (\ref{defU^n}).

Note that a visual metric $d$ induces the standard topology on $S^2$ (\cite[Proposition~8.3]{BM17}).

In fact, visual metrics play a crucial role in connecting the dynamical arguments with geometric properties for rational expanding Thurston maps, especially Latt\`{e}s maps.

A Jordan curve $\CC\subseteq S^2$ is \defn{$f$-invariant} if $f(\CC)\subseteq \CC$. We are interested in $f$-invariant Jordan curves that contain $\post f$, since for such a Jordan curve $\CC$, we get a cellular Markov partition $(\DD^1(f,\CC),\DD^0(f,\CC))$ for $f$. According to Example~15.11 in \cite{BM17}, such $f$-invariant Jordan curves containing $\post{f}$ need not exist. However, M.~Bonk and D.~Meyer \cite[Theorem~15.1]{BM17} proved that there exists an $f^n$-invariant Jordan curve $\CC$ containing $\post{f}$ for each sufficiently large $n$ depending on $f$. A slightly stronger version of this result was proved in \cite[Lemma~3.11]{Li16}, and we record it below.

\begin{lemma}[M.~Bonk \& D.~Meyer \cite{BM17}, Z.~Li \cite{Li16}]  \label{lmCexistsL}
Let $f\:S^2\rightarrow S^2$ be an expanding Thurston map, and $\wt{\CC}\subseteq S^2$ be a Jordan curve with $\post f\subseteq \wt{\CC}$. Then there exists an integer $N(f,\wt{\CC}) \in \N$ such that for each $n\geq N(f,\wt{\CC})$ there exists an $f^n$-invariant Jordan curve $\CC$ isotopic to $\wt{\CC}$ rel.\ $\post f$ such that no $n$-tile in $\X^n(f,\CC)$ joins opposite sides of $\CC$.
\end{lemma}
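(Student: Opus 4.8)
This lemma only records results from the literature, so the plan is to recall the construction of \cite[Chapter~15]{BM17}, with its refinement in \cite[Section~3]{Li16}, and to flag which of the preliminaries it rests on. First I would reduce the existence assertion to a purely combinatorial condition on $\wt{\CC}$: by Lemma~\ref{lmCellBoundsBM}(ii) the $n$-tiles in $\X^n(f,\wt{\CC})$ have diameter at most $C\Lambda^{-n}\to 0$, whereas the two ``opposite'' parts of $\wt{\CC}$ are disjoint compact sets lying at some fixed positive distance; hence for all $n$ beyond an explicit threshold, no $n$-tile in $\X^n(f,\wt{\CC})$ joins opposite sides of $\wt{\CC}$ in the sense of \cite{BM17}. (When $\card(\post f)=2$ one first adds auxiliary marked points to $\wt{\CC}$, as in \cite{BM17}.) This determines $N(f,\wt{\CC})$.

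Next, fix $n\ge N(f,\wt{\CC})$ and put $F\coloneqq f^n$, so that $\post F=\post f$ and $\DD^k(F,\cdot)=\DD^{kn}(f,\cdot)$ by Proposition~\ref{propCellDecomp}(vii); in particular $\X^n(f,\wt{\CC})=\X^1(F,\wt{\CC})$, so the reduction above says that no $1$-tile of $(F,\wt{\CC})$ joins opposite sides of $\wt{\CC}$. Starting from $\CC_0\coloneqq\wt{\CC}$ I would build Jordan curves $\CC_k$ by the edge-replacement procedure of \cite{BM17}: given that no $1$-tile of $(F,\CC_k)$ joins opposite sides of $\CC_k$, one replaces each arc of $\CC_k$ between two consecutive points of $\post f$ by a path in the $1$-skeleton $F^{-1}(\CC_k)$ of $\DD^1(F,\CC_k)$ joining the same two points and homotopic to the old arc rel $\post f$ within a thin collar of it, then assembles these into a Jordan curve $\CC_{k+1}\subseteq F^{-1}(\CC_k)$ with $\post f\subseteq\CC_{k+1}$, isotopic to $\CC_k$ rel $\post f$ and with $F(\CC_{k+1})\subseteq\CC_k$. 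One checks that $\CC_{k+1}$ again has the no-opposite-sides property, so the construction iterates; since $\CC_{k+1}\subseteq F^{-1}(\CC_k)$, induction gives $\CC_k\subseteq F^{-k}(\wt{\CC})$, i.e.\ $\CC_k$ lies in the $1$-skeleton of $\DD^{kn}(f,\wt{\CC})$, and the passage from $\CC_k$ to $\CC_{k+1}$ takes place at combinatorial scale $kn$.

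Then I would pass to the limit. By Lemma~\ref{lmCellBoundsBM}(ii) the $kn$-tiles across which $\CC_{k+1}$ differs from $\CC_k$ have diameter at most $C\Lambda^{-kn}$, so $\Hdist(\CC_k,\CC_{k+1})\le C\Lambda^{-kn}$ and $(\CC_k)_{k\in\N_0}$ is Hausdorff--Cauchy; let $\CC$ be its limit. Expansion, again via Lemma~\ref{lmCellBoundsBM}, is exactly what ensures that $\CC$ is a genuine Jordan curve rather than merely a closed connected set, that the isotopies at level $k$ (each supported on sets of diameter $\bigO(\Lambda^{-kn})$) compose to a well-defined isotopy carrying $\wt{\CC}$ to $\CC$ rel $\post f$, and that the no-opposite-sides property survives; finally $F(\CC)\subseteq\CC$, i.e.\ $f^n$-invariance of $\CC$, follows by continuity from $F(\CC_{k+1})\subseteq\CC_k$.

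The genuine content, and the step I expect to be hardest, is the combinatorial--topological core of the edge replacement: showing that ``no $n$-tile joins opposite sides'' is precisely what lets one both perform the replacement inside a controlled collar \emph{and} inherit the same property for the new curve, so that the iteration is self-sustaining; and then, in the limit, upgrading a Hausdorff limit of curves to an honest Jordan curve lying in the prescribed isotopy class rel $\post f$. Both are the substance of \cite[Chapter~15]{BM17}, and the sharpened conclusion stated here --- isotopy rel $\post f$ together with the no-opposite-sides property for the resulting curve --- is \cite[Lemma~3.11]{Li16}.
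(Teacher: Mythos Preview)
Your proposal is appropriate: the paper does not prove this lemma but merely records it from \cite[Theorem~15.1]{BM17} and \cite[Lemma~3.11]{Li16}, and your sketch accurately summarizes the iterative edge-replacement construction and Hausdorff-limit argument from those sources. One small correction: the parenthetical about $\card(\post f)=2$ is unnecessary here, since expanding Thurston maps always satisfy $\card(\post f)\ge 3$ (as noted just below Definition~\ref{defJoinOppositeSides}).
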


The phrase ``joining opposite sides'' has a specific meaning in our context. 

\begin{definition}[Joining opposite sides]  \label{defJoinOppositeSides} 
Fix a Thurston map $f$ with $\card(\post f) \geq 3$ and an $f$-invariant Jordan curve $\CC$ containing $\post f$.  A set $K\subseteq S^2$ \defn{joins opposite sides} of $\CC$ if $K$ meets two disjoint $0$-edges when $\card( \post f)\geq 4$, or $K$ meets  all  three $0$-edges when $\card(\post f)=3$. 
 \end{definition}
 
Note that $\card (\post f) \geq 3$ for each expanding Thurston map $f$ \cite[Lemma~6.1]{BM17}.

The following lemma proved in \cite[Lemma~3.13]{Li18} generalizes \cite[Lemma~15.25]{BM17}.

\begin{lemma}[M.~Bonk \& D.~Meyer \cite{BM17}, Z.~Li \cite{Li18}]   \label{lmMetricDistortion}
Let $f\:S^2 \rightarrow S^2$ be an expanding Thurston map, and $\CC \subseteq S^2$ be a Jordan curve that satisfies $\post f \subseteq \CC$ and $f^{n_\CC}(\CC)\subseteq\CC$ for some $n_\CC\in\N$. Let $d$ be a visual metric on $S^2$ for $f$ with expansion factor $\Lambda>1$. Then there exists a constant $C_0 > 1$, depending only on $f$, $d$, $\CC$, and $n_\CC$, with the following property:

If $k, \, n\in\N_0$, $X^{n+k}\in\X^{n+k}(f,\CC)$, and $x, \, y\in X^{n+k}$, then 
\begin{equation}   \label{eqMetricDistortion}
C_0^{-1} d(x,y) \leq \Lambda^{-n}  d(f^n(x),f^n(y)) \leq C_0 d(x,y).
\end{equation}
\end{lemma}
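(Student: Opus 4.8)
The plan is to pin both $d(x,y)$ and $d(f^n(x),f^n(y))$, up to multiplicative constants depending only on $f$, $d$, $\CC$, and $n_\CC$, to a single combinatorial quantity recording how deep into the cell decompositions the two points remain in a common tile. Write $N\coloneqq n+k$, abbreviate $X^N\coloneqq X^{n+k}$, and put $X^k\coloneqq f^n(X^N)$. By Proposition~\ref{propCellDecomp}~(i), $f^n$ is cellular for $\bigl(\DD^{N}(f,\CC),\DD^{k}(f,\CC)\bigr)$, so $X^k\in\X^k(f,\CC)$ and $f^n|_{X^N}\colon X^N\to X^k$ is a homeomorphism; applying Proposition~\ref{propCellDecomp}~(i),(ii) at level $N+jn_\CC$ and using this injectivity, for each $j\in\N_0$ the map $f^n|_{X^N}$ restricts to a bijection from the set of $(N+jn_\CC)$-tiles contained in $X^N$ onto the set of $(k+jn_\CC)$-tiles contained in $X^k$, and $P\cap Q=\emptyset$ if and only if $f^n(P)\cap f^n(Q)=\emptyset$ for any two $(N+jn_\CC)$-tiles $P,Q\subseteq X^N$. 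For $k'\in\{k,N\}$, a $k'$-tile $Y$, and distinct $p,q\in Y$, set
\begin{equation*}
\Delta_Y(p,q)\coloneqq\max\bigl\{j\in\N_0:\text{there exist }(k'+jn_\CC)\text{-tiles }P,Q\subseteq Y\text{ with }p\in P,\ q\in Q,\ P\cap Q\neq\emptyset\bigr\}.
\end{equation*}

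First, $\Delta_Y(p,q)$ is a well-defined nonnegative integer: $j=0$ is admissible (take $P=Q=Y$), and for large $j$ no admissible pair exists, since then $d(p,q)\le\diam_d(P)+\diam_d(Q)\le 2C\Lambda^{-(k'+jn_\CC)}$ by Lemma~\ref{lmCellBoundsBM}~(ii). This, like the whole construction, uses that $Y$ is the union of the $(k'+jn_\CC)$-tiles contained in it, i.e.\ that $\DD^{k'+jn_\CC}(f,\CC)$ refines $\DD^{k'}(f,\CC)$; this is precisely where the hypothesis $f^{n_\CC}(\CC)\subseteq\CC$ is needed, as together with the forward invariance of $\post f$ under $f$ it makes $\DD^{M+n_\CC}(f,\CC)$ a refinement of $\DD^{M}(f,\CC)$ for every $M\in\N_0$ (compare the $1$- and $0$-skeletons using Proposition~\ref{propCellDecomp}~(iii)). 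With $\Delta\coloneqq\Delta_Y(p,q)$ and $m\coloneqq k'+n_\CC\Delta$, I would next prove
\begin{equation*}
C^{-1}\Lambda^{-n_\CC}\Lambda^{-m}\le d(p,q)\le 2C\Lambda^{-m}.
\end{equation*}
The right-hand inequality is immediate: at level $m$ there are $m$-tiles $P\ni p$ and $Q\ni q$ inside $Y$ with $P\cap Q\neq\emptyset$, so $d(p,q)\le\diam_d(P)+\diam_d(Q)\le 2C\Lambda^{-m}$ by Lemma~\ref{lmCellBoundsBM}~(ii). For the left-hand one, pick any $(m+n_\CC)$-tiles $P'\ni p$ and $Q'\ni q$ with $P',Q'\subseteq Y$; maximality of $\Delta$ forces $P'\cap Q'=\emptyset$, hence $P'\neq Q'$, and Lemma~\ref{lmCellBoundsBM}~(i) gives $d(p,q)\ge d(P',Q')\ge C^{-1}\Lambda^{-(m+n_\CC)}$.

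It remains to combine these. If $x\neq y$, the tile bijection above shows that a given $j\in\N_0$ is admissible for $\Delta_{X^N}(x,y)$ exactly when it is admissible for $\Delta_{X^k}(f^n(x),f^n(y))$ — pass between admissible pairs via $P,Q\mapsto f^n(P),f^n(Q)$ and its inverse, using the injectivity of $f^n|_{X^N}$ to carry over the conditions $x\in P$, $y\in Q$, $P\cap Q\neq\emptyset$ — so these two quantities coincide; call the common value $\Delta$. Setting $m\coloneqq N+n_\CC\Delta=(n+k)+n_\CC\Delta$ and noting $k+n_\CC\Delta=m-n$, the two-sided estimate applied to $(X^N,x,y)$ and to $(X^k,f^n(x),f^n(y))$ shows that both $d(x,y)$ and $\Lambda^{-n}d(f^n(x),f^n(y))$ lie in the interval $\bigl[\,C^{-1}\Lambda^{-n_\CC}\Lambda^{-m},\ 2C\Lambda^{-m}\,\bigr]$; hence their ratio lies in $\bigl[\,(2C^2\Lambda^{n_\CC})^{-1},\ 2C^2\Lambda^{n_\CC}\,\bigr]$. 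Taking $C_0\coloneqq 2C^2\Lambda^{n_\CC}>1$ — which depends only on $f$, $d$, $\CC$, $n_\CC$ through $C$ and $\Lambda$ — yields \eqref{eqMetricDistortion}, the case $x=y$ being trivial.

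The step I expect to be the real work is combinatorial rather than metric: justifying with care that $f^n|_{X^N}$ restricts to the claimed level-by-level bijection between subtiles and respects the ``non-empty intersection'' relation (which rests on $f^n|_{X^N}$ being a homeomorphism and on Proposition~\ref{propCellDecomp}~(i),(ii)), and that a $k'$-tile really is the union of the $(k'+jn_\CC)$-tiles it contains — that is, the refinement of $\DD^{k'}(f,\CC)$ by $\DD^{k'+jn_\CC}(f,\CC)$ — which is exactly the place where the periodicity hypothesis $f^{n_\CC}(\CC)\subseteq\CC$ cannot be dropped. Once those facts are in place, the entire metric content comes from parts (i) and (ii) of Lemma~\ref{lmCellBoundsBM}.
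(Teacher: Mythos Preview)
Your argument is correct. The paper does not supply its own proof of this lemma: it simply records the statement and cites \cite[Lemma~3.13]{Li18} (which in turn generalizes \cite[Lemma~15.25]{BM17}). So there is nothing in the present paper to compare against line by line.

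That said, your route is essentially the standard one used in those references. The quantity you call $\Delta_Y(p,q)$ is a variant of the combinatorial separation level $m_{f,\CC}(x,y)$ that underlies the definition of visual metrics in \cite[Chapter~8]{BM17}; the two-sided estimate $C^{-1}\Lambda^{-n_\CC}\Lambda^{-m}\le d(p,q)\le 2C\Lambda^{-m}$ is exactly the mechanism by which a visual metric is tied to the cell structure (Lemma~\ref{lmCellBoundsBM}~(i),(ii)); and the invariance $\Delta_{X^N}(x,y)=\Delta_{X^k}(f^n(x),f^n(y))$ is the combinatorial heart of the matter, resting on the homeomorphism $f^n|_{X^N}$ and on the refinement property coming from $f^{n_\CC}(\CC)\subseteq\CC$. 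Your identification of that refinement step as the place where the invariance hypothesis enters is on the mark, and the explicit constant $C_0=2C^2\Lambda^{n_\CC}$ you obtain is of the same shape as what appears in the cited proofs. One small point worth making explicit when you write it up: in the lower bound you only need \emph{some} pair $P',Q'\subseteq Y$ at level $m+n_\CC$ to be disjoint (which maximality gives for every such pair), not that every $(m+n_\CC)$-tile through $p$ lies in $Y$; you have this right, but it is easy to misread.
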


The following distortion lemma serves as a cornerstone in the development of thermodynamic formalism for expanding Thurston maps in \cite{Li18} (see \cite[Lemma~5.1]{Li18}).

\begin{lemma}[Z.~Li \cite{Li18}]    \label{lmSnPhiBound}
Let $f\:S^2 \rightarrow S^2$ be an expanding Thurston map and $\CC \subseteq S^2$ be a Jordan curve containing $\post f$ with the property that $f^{n_\CC}(\CC)\subseteq \CC$ for some $n_\CC\in\N$. Let $d$ be a visual metric on $S^2$ for $f$ with expansion factor $\Lambda>1$. Let $\phi\in \Holder{\alpha}(S^2,d)$ be a real-valued H\"{o}lder continuous function with an exponent $\alpha\in(0,1]$. Then there exists a constant $C_1=C_1(f,\CC,d,\phi,\alpha)$ depending only on $f$, $\CC$, $d$, $\phi$, and $\alpha$ such that
\begin{equation}  \label{eqSnPhiBound}
\Abs{S_n\phi(x)-S_n\phi(y)}  \leq C_1 d(f^n(x),f^n(y))^\alpha,
\end{equation}
for $n, \, m\in\N_0$ with $n\leq m $, $X^m\in\X^m(f,\CC)$, and $x, \, y\in X^m$. Quantitatively, we can choose 
\begin{equation}   \label{eqC1Expression}
C_1 \coloneqq  \Hseminorm{\alpha,\, (S^2,d)}{\phi} C_0  ( 1-\Lambda^{-\alpha} )^{-1},
\end{equation}
where $C_0 > 1$ is the constant depending only on $f$, $\CC$, and $d$ from Lemma~\ref{lmMetricDistortion}.
\end{lemma}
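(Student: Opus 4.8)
The plan is a telescoping estimate controlled by the metric distortion Lemma~\ref{lmMetricDistortion}, followed by the summation of a geometric series. First I would fix $n, \, m\in\N_0$ with $n\leq m$, a tile $X^m\in\X^m(f,\CC)$, and points $x, \, y\in X^m$, and write
\[
S_n\phi(x) - S_n\phi(y) = \sum_{j=0}^{n-1}\bigl(\phi(f^j(x)) - \phi(f^j(y))\bigr).
\]
By the definition of the H\"older seminorm $\Hseminorm{\alpha,\,(S^2,d)}{\phi}$ we have $\abs{\phi(u)-\phi(v)}\leq \Hseminorm{\alpha,\,(S^2,d)}{\phi}\, d(u,v)^\alpha$ for all $u, \, v\in S^2$, so the absolute value of the left-hand side is at most $\Hseminorm{\alpha,\,(S^2,d)}{\phi}\sum_{j=0}^{n-1} d(f^j(x), f^j(y))^\alpha$. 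The whole matter then reduces to showing that the distances $d(f^j(x), f^j(y))$ decay geometrically in $n-j$, with ratio $\Lambda^{-\alpha}$ per step, measured from $d(f^n(x), f^n(y))$ at the top.

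To get this decay I would use that $x$ and $y$ lie in one common $m$-tile with $m\geq n$. By Proposition~\ref{propCellDecomp}~(i), for each $j\in\{0, \dots, n-1\}$ the restriction $f^j|_{X^m}$ is a homeomorphism of $X^m$ onto the $(m-j)$-tile $f^j(X^m)\in\X^{m-j}(f,\CC)$; in particular $f^j(x)$ and $f^j(y)$ both lie in this single $(m-j)$-tile. Writing $m-j = (n-j) + (m-n)$ with $n-j\geq 0$ and $m-n\geq 0$, Lemma~\ref{lmMetricDistortion}, applied to this $(m-j)$-tile, to the two points $f^j(x), \, f^j(y)$ of it, and to the iterate $f^{n-j}$, yields
\[
d(f^j(x), f^j(y)) \leq C_0 \Lambda^{-(n-j)}\, d(f^n(x), f^n(y)),
\]
where $C_0 > 1$ is the constant of that lemma, depending only on $f$, $\CC$, $d$, and $n_\CC$.

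Substituting this bound, using $C_0^\alpha\leq C_0$ (since $C_0 > 1$ and $\alpha\leq 1$), and summing $\sum_{j=0}^{n-1}\Lambda^{-\alpha(n-j)} = \sum_{i=1}^{n}\Lambda^{-\alpha i} \leq (1-\Lambda^{-\alpha})^{-1}$ (valid because $\Lambda > 1$), I would arrive at
\[
\Abs{S_n\phi(x) - S_n\phi(y)} \leq \Hseminorm{\alpha,\,(S^2,d)}{\phi}\, C_0 (1-\Lambda^{-\alpha})^{-1}\, d(f^n(x), f^n(y))^\alpha,
\]
which is exactly \eqref{eqSnPhiBound} with the quantitative choice of $C_1$ in \eqref{eqC1Expression}. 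The case $n=0$ is immediate since $S_0\phi\equiv 0$.

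The argument presents no genuine obstacle; it is the standard ``bounded distortion along a descending chain of tiles'' computation. The one step that needs care, and the only place the hypotheses $x, \, y\in X^m$ and $n\leq m$ are used, is the level bookkeeping in the middle paragraph: one must verify that $f^j(x)$ and $f^j(y)$ remain in a common tile of level exactly $m-j$ (not merely of some lower level), since this is what produces the sharp contraction factor $\Lambda^{-(n-j)}$ and hence the summable geometric tail $(1-\Lambda^{-\alpha})^{-1}$ that appears in $C_1$.
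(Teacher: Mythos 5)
Your proof is correct, and it is essentially the same telescoping argument as in the cited reference \cite[Lemma~5.1]{Li18} (this paper does not reproduce the proof): expand $S_n\phi(x)-S_n\phi(y)$ as a sum, apply the H\"older seminorm bound to each term, use Proposition~\ref{propCellDecomp}~(i) to place $f^j(x),f^j(y)$ in the common $(m-j)$-tile $f^j(X^m)$, invoke Lemma~\ref{lmMetricDistortion} with the level decomposition $m-j=(n-j)+(m-n)$ to get $d(f^j(x),f^j(y))\le C_0\Lambda^{-(n-j)}d(f^n(x),f^n(y))$, and sum the resulting geometric series using $C_0^\alpha\le C_0$. The level bookkeeping you flag is indeed the one place the hypotheses $n\le m$ and $x,y\in X^m$ are used, and your handling of it is correct.
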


\begin{definition}[Eventually positive functions]  \label{defEventuallyPositive}
Let $g\: X\rightarrow X$ be a map on a set $X$, and $\varphi\:X\rightarrow\C$ be a complex-valued function on $X$. Then $\varphi$ is \defn{eventually positive} if there exists $N\in\N$ such that $S_n\varphi(x)>0$ for each $x\in X$ and each $n\in\N$ with $n\geq N$.
\end{definition}

\subsection{Combinatorial expansion factor}
We first recall some concepts related to the expansion of expanding Thurston maps from a combinatorial point of view. Suppose that $f\: S^2 \rightarrow S^2$ is a Thurston map and $\CC\subseteq S^2$ is a Jordan curve with $\post f \subseteq \CC$. For each $n\in\N_0$, we denote by $D_n(f,\CC)$ the minimal number of $n$-tiles required to form a connected set joining opposite sides of $\CC$; more precisely,
\begin{align}   \label{eqDefDn}
D_n(f,\CC) \coloneqq \min \biggl\{ N\in\N  &:   \text{there exist } X_1,  \, X_2,  \, \dots,  \, X_N \in \X^n(f,\CC)  \text{ such that } \\
                                                                                            &\;\;\; \bigcup_{j=1}^{N} X_j \text{ is connected and joins opposite sides of } \CC \biggr\}.  \notag 
\end{align}
See \cite[Section~5.7]{BM17} for more properties of $D_n(f,\CC)$. M.~Bonk and D.~Meyer showed in \cite[Proposition~16.1]{BM17} that the limit
\begin{equation}  \label{eqDefCombExpansionFactor}
\Lambda_0(f) \coloneqq \lim_{n\to+\infty} D_n(f,\CC)^{1/n}
\end{equation}
exists and is independent of $\CC$. We have $\Lambda_0(f) \in (1,+\infty)$. The constant $\Lambda_0(f)$ is called the \defn{combinatorial expansion factor} of $f$.

The combinatorial expansion factor $\Lambda_0(f)$ serves as a sharp upper bound for the expansion factors of visual metrics of $f$; more precisely, for an expanding Thurston map $f$, the following statements hold (\cite[Theorem~16.3]{BM17}):
\begin{enumerate}
\smallskip
\item[(i)] If $\Lambda$ is the expansion factor of a visual metric for $f$, then $\Lambda\in (1, \Lambda_0(f) ]$.

\smallskip
\item[(ii)] Conversely, if $\Lambda \in (1, \Lambda_0(f) )$, then there exists a visual metric for $f$ with expansion factor $\Lambda$.
\end{enumerate}

\subsection{Strong non-integrability condition}

We recall the strong non-integrability condition from \cite[Subsection~7.1]{LZ24b}.

\begin{definition}[Strong non-integrability condition]   \label{defStrongNonIntegrability}
Let $f\: S^2 \rightarrow S^2$ be an expanding Thurston map and $d$ be a visual metric on $S^2$ for $f$. Fix $\alpha \in (0,1]$. Let $\phi\in \Holder{\alpha}(S^2,d)$ be a real-valued H\"{o}lder continuous function with an exponent $\alpha$. 

\begin{enumerate}

\smallskip
\item[(1)] We say that $\phi$ satisfies the \defn{$(\CC, \alpha)$-strong non-integrability condition} (with respect to $f$ and $d$), for a Jordan curve $\CC\subseteq S^2$ with $\post f \subseteq \CC$, if there exist 
\begin{enumerate}
	\smallskip
	\item[(a)] numbers $N_0,\, M_0\in\N$, $\varepsilon \in (0,1)$, and
	
	\smallskip
	\item[(b)] $M_0$-tiles $Y^{M_0}_\b\in \X^{M_0}_\b(f,\CC)$,  $Y^{M_0}_\w\in \X^{M_0}_\w(f,\CC)$
\end{enumerate}
such that for each $\c\in\{\b, \, \w\}$, each integer $M \geq M_0$, and each $M$-tile $X\in \X^M(f,\CC)$ with $X \subseteq Y^{M_0}_\c$, there exist two points $x_1(X),\, x_2(X) \in X$ with the following properties:
\begin{enumerate}
\smallskip
\item[(i)] $\min \{ d(x_1(X), S^2 \setminus X), \, d(x_2(X), S^2 \setminus X), d(x_1(X), x_2(X)) \} \geq \varepsilon \diam_d(X)$, and

\smallskip
\item[(ii)] for each integer $N \geq N_0$, there exist two  $(N+M_0)$-tiles $X^{N+M_0}_{\c,1}, \, X^{N+M_0}_{\c,2} \in \X^{N+M_0}(f,\CC)$ such that $Y^{M_0}_\c = f^N\bigl(  X^{N+M_0}_{\c,1}  \bigr) =   f^N\bigl(  X^{N+M_0}_{\c,2}  \bigr)$, and that
\begin{equation}   \label{eqSNIBoundsDefn}
\frac{ \Abs{  S_{N }\phi ( \varsigma_1 (x_1(X)) ) -  S_{N }\phi ( \varsigma_2 (x_1(X)) )  -S_{N }\phi ( \varsigma_1 (x_2(X)) ) +  S_{N }\phi ( \varsigma_2 (x_2(X)) )   }  } {d(x_1(X),x_2(X))^\alpha}
\geq \varepsilon,
\end{equation}
where we write $\varsigma_1 \coloneqq \bigl(f^{N}\big|_{X^{N+M_0}_{\c,1}} \bigr)^{-1}$ and $\varsigma_2 \coloneqq \bigl(f^{N}\big|_{X^{N+M_0}_{\c,2}} \bigr)^{-1}$. 
\end{enumerate}

\smallskip
\item[(2)] We say that $\phi$ satisfies the \defn{$\alpha$-strong non-integrability condition} (with respect to $f$ and $d$) if $\phi$ satisfies the $(\CC, \alpha)$-strong non-integrability condition with respect to $f$ and $d$ for some  Jordan curve $\CC\subseteq S^2$ with $\post f \subseteq \CC$.

\smallskip
\item[(3)] We say that $\phi$ satisfies the \defn{strong non-integrability condition} (with respect to $f$ and $d$) if $\phi$ satisfies the $\alpha'$-strong non-integrability condition with respect to $f$ and $d$ for some  $\alpha' \in (0, \alpha]$.
\end{enumerate}
\end{definition}

We have shown in \cite[Lemma~7.2]{LZ24b} that the strong non-integrability condition is independent of the Jordan curve $\CC$. We record it here for the convenience of the reader.

\begin{lemma}   \label{lmSNIwoC}
Let $f\: S^2 \rightarrow S^2$ be an expanding Thurston map and $d$ be a visual metric on $S^2$ for $f$. Let $\CC$ and $\widehat\CC$ be Jordan curves on $S^2$ with $\post f \subseteq \CC \cap \widehat\CC$. Let $\phi \in \Holder{\alpha}(S^2,d)$ be a real-valued H\"{o}lder continuous function with an exponent $\alpha \in (0,1]$. Fix arbitrary integers $n,\, \widehat{n} \in \N$. Let $F\coloneqq f^n$ and $\widehat{F} \coloneqq f^{\widehat{n}}$ be iterates of $f$. Then $\Phi \coloneqq S_n^f \phi$ satisfies the $(\CC, \alpha)$-strong non-integrability condition with respect to $F$ and $d$ if and only if $\widehat\Phi \coloneqq S_{\widehat{n}}^f \phi$ satisfies the $( \widehat\CC, \alpha )$-strong non-integrability condition with respect to $\widehat{F}$ and $d$.

In particular, if $\phi$ satisfies the $\alpha$-strong non-integrability condition with respect to $f$ and $d$, then it satisfies the $(\CC, \alpha)$-strong non-integrability condition with respect to $f$ and $d$.
\end{lemma}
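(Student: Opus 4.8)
The plan is to prove the two directions of the equivalence by symmetry: it suffices to show that if $\Phi = S_n^f\phi$ satisfies the $(\CC,\alpha)$-strong non-integrability condition with respect to $F = f^n$ and $d$, then $\widehat\Phi = S_{\widehat n}^f\phi$ satisfies the $(\widehat\CC,\alpha)$-strong non-integrability condition with respect to $\widehat F = f^{\widehat n}$ and $d$; the converse follows by swapping the roles. The key point is that both conditions are ``eventually'' statements about Birkhoff-type sums $S_N^f\phi$ of the \emph{same} underlying function $\phi$ under the \emph{same} underlying map $f$, expressed along two different cell-decomposition hierarchies $\DD^\bullet(F,\CC) = \DD^{n\bullet}(f,\CC)$ and $\DD^\bullet(\widehat F,\widehat\CC) = \DD^{\widehat n\bullet}(f,\widehat\CC)$ (using Proposition~\ref{propCellDecomp}~(vii)). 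Since both hierarchies refine the common $f$-cell structure at comparable scales --- the $k$-tiles for $(F,\CC)$ have diameters comparable to $\Lambda^{-nk}$ and those for $(\widehat F,\widehat\CC)$ to $\Lambda^{-\widehat n k}$, all uniformly by Lemma~\ref{lmCellBoundsBM}~(ii) --- the combinatorial data witnessing the condition on one side can be transported to the other.

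Concretely, I would first observe that passing from $f$ to an iterate $f^m$ and back is harmless: the condition for $S_n^f\phi$ with respect to $f^n$ is equivalent to the condition for $S_{nn'}^f\phi = S_{n'}^{f^n}(S_n^f\phi)$ with respect to $f^{nn'}$, because the cell decompositions coincide ($\DD^\bullet(f^{nn'},\CC)=\DD^{nn'\bullet}(f,\CC)$) and the relevant Birkhoff sums $S_N^{f^n}(S_n^f\phi) = S_{Nn}^f\phi$ are just reindexed sums of $S_\bullet^f\phi$; a change of the index $N$ by the factor $n$ does not affect an ``eventually, for all large $N$'' statement, and the inverse-branch maps $\varsigma_i$ are literally the same point maps. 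This reduces the problem to the case $n = \widehat n$: both sides now use the same function $\Phi = S_n^f\phi$ and the same map $F = f^n$, and only the curve changes from $\CC$ to $\widehat\CC$. For the curve change, the strategy is to unfold the $\CC$-condition along $f$ down to a scale finer than what $\widehat\CC$ can resolve: given the witnessing tiles $Y_\c^{M_0}\in\X^{M_0}(F,\CC)$, choose $M_1$ large (in terms of $M_0$ and the visual-metric constants $C,K$ of Lemma~\ref{lmCellBoundsBM}) so that every $(F,\widehat\CC)$-tile $Z$ of level $\geq M_1$ that meets $\inte Y_\c^{M_0}$ is contained in $Y_\c^{M_0}$; pick one such $Z_\c\subseteq Y_\c^{M_0}$ of each color, and then the sub-tiles $X\in\X^M(F,\widehat\CC)$ with $X\subseteq Z_\c$ are automatically sub-tiles of $Y_\c^{M_0}$ for the $(F,\CC)$-structure, so the points $x_1(X),x_2(X)\in X$ provided by the $\CC$-condition, together with their inverse branches $\varsigma_1,\varsigma_2$ (which are defined purely in terms of $F$, not of the curve), witness property~(i) up to a multiplicative constant comparing $\diam_d$ with respect to the two tile structures, and witness the lower bound~\eqref{eqSNIBoundsDefn} with possibly a smaller $\varepsilon$ after one more application of Lemma~\ref{lmCellBoundsBM}. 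One must also check that the $(N+M_1)$-tiles for $(F,\widehat\CC)$ mapping onto $Z_\c$ under $F^N$ exist, which is immediate from Proposition~\ref{propCellDecomp}~(ii).

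The last sentence of the lemma is then a special case: taking $\widehat\CC = \CC$, $n = 1$, and $\widehat n$ the integer from the definition of the $\alpha$-strong non-integrability condition for which $\phi$ satisfies the $(\widehat\CC,\alpha)$-condition with respect to $f^{\widehat n}$, the equivalence gives the $(\CC,\alpha)$-condition with respect to $f$ itself (i.e.\ with $n=1$).

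I expect the main obstacle to be the bookkeeping in the curve-change step: tracking how the two independent constants $N_0$ and $M_0$ from the definition interact when one re-expresses ``for all $M\geq M_0$'' and ``for all $N\geq N_0$'' across the two tile hierarchies, and verifying that property~(i)'s quantitative separation $\varepsilon\diam_d(X)$ survives replacing $\diam_d(X)$ for a $(F,\CC)$-tile by $\diam_d$ of the containing $(F,\widehat\CC)$-tile --- this requires the two-sided comparison of both quantities with $\Lambda^{-(\text{level})}$ from Lemma~\ref{lmCellBoundsBM}~(ii) and a uniform choice of the scale offset $M_1 - M_0$. The analytic content, namely the bound~\eqref{eqSNIBoundsDefn}, transfers essentially unchanged because $S_N\phi$, the inverse branches $\varsigma_i$, and the points $x_i(X)$ are all curve-independent objects; only their packaging into tiles needs adjusting.
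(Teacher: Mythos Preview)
The paper does not prove this lemma here; the sentence preceding the statement records that it was established in the companion article \cite{LZ23b}. So there is no in-paper argument to compare against, and I can only assess your outline on its own terms.

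Your two-step plan --- pass to a common iterate, then change the curve at that iterate --- is the right shape, and your observation that the inverse branches $\varsigma_i$ and the Birkhoff sums $S^f_\bullet\phi$ are curve-independent is the correct reason the analytic bound \eqref{eqSNIBoundsDefn} transfers. However, one step is not right as written. You say that ``the sub-tiles $X\in\X^M(F,\widehat\CC)$ with $X\subseteq Z_\c$ are automatically sub-tiles of $Y_\c^{M_0}$ for the $(F,\CC)$-structure, so the points $x_1(X),x_2(X)\in X$ provided by the $\CC$-condition\dots''. An $(F,\widehat\CC)$-tile is \emph{not} an $(F,\CC)$-tile, so the $(\CC,\alpha)$-condition supplies no points for it. What is missing is an explicit nesting: given such an $X$, use Lemma~\ref{lmCellBoundsBM}~(ii) and~(v) to locate an $(F,\CC)$-tile $X'\subseteq X$ at a level $M'$ with $M'-M$ bounded by a constant depending only on $f,d,\CC,\widehat\CC$, and set $x_i(X)\coloneqq x_i(X')$. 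Property~(i) then holds with $\varepsilon$ weakened by the uniformly bounded ratio $\diam_d(X')/\diam_d(X)$, while \eqref{eqSNIBoundsDefn} is literally unchanged. The same nesting device is needed to make your pass-to-iterate step work in both directions, since without $F(\CC)\subseteq\CC$ the decompositions $\DD^k(F,\CC)$ do not refine one another combinatorially; you should spell it out there as well.

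A smaller point: your derivation of the ``in particular'' clause introduces an iterate $f^{\widehat n}$, but Definition~\ref{defStrongNonIntegrability}(2) quantifies only over the curve, not over iterates. The deduction is simply the main equivalence with $n=\widehat n=1$, taking $\widehat\CC$ to be the witnessing curve and $\CC$ the given one.
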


Denote
\begin{equation}   \label{eqDefSigma-}
	\Sigma_{f,\,\CC}^- \coloneqq \bigl\{\{X_{\minus i}\}_{i\in\N_0}  :  X_{\minus i}\in \X^1(f,\CC) \text{ and } f \bigl(X_{\minus (i+1)}\bigr) \supseteq X_{\minus i},\text{ for } i\in\N_0 \bigr\}.
\end{equation}

The next notion is crucial in Section~\ref{sctDensity}.

\begin{definition}[Temporal distance]  \label{defTemporalDist}
	Let $f\: S^2 \rightarrow S^2$ be an expanding Thurston map and $d$ be a visual metric on $S^2$ for $f$. Let $\CC$ be a Jordan curve on $S^2$ with $\post f \subseteq \CC$ and $f(\CC)\subseteq \CC$. Let $\phi \in \Holder{\alpha}(S^2,d)$ be a real-valued H\"{o}lder continuous function with an exponent $\alpha \in (0,1]$. 
	
	For $\xi=\{ \xi_{\minus i} \}_{i\in\N_0} \in  \Sigma_{f,\,\CC}^-$ and $\eta=\{ \eta_{\minus i} \}_{i\in\N_0} \in  \Sigma_{f,\,\CC}^-$ with $f(\xi_0) = f(\eta_0)$, we define the \defn{temporal distance} $\psi^{f,\,\CC}_{\xi,\,\eta}$ as 
	\begin{equation*}
	\psi^{f,\,\CC}_{\xi,\,\eta}(x,y) \coloneqq \Delta^{f,\,\CC}_{\psi,\,\xi} (x,y) - \Delta^{f,\,\CC}_{\psi,\,\eta} (x,y)
	\end{equation*}
	for each
	$
	(x,y)\in \bigcup\limits_{\substack{X\in\X^1(f,\CC) \\ X\subseteq f(\xi_0)}}X \times X.
	$
\end{definition}

For the definition and characterizations of a qualitative version of the strong non-integrability condition, formulated in terms of the temporal distance, see \cite[Definition~7.3 and Theorem~F]{LZ24a}.

\section{A constructive proof of density}   \label{sctDensity}

The main result of this section is Theorem~\ref{thmPerturbToStrongNonIntegrable}. I first need to establish the following lemma.

\begin{lemma}   \label{lmDisjointBackwardOrbits}
Let $f\: S^2 \rightarrow S^2$ be an expanding Thurston map with a Jordan curve $\CC\subseteq S^2$ satisfying $\post f\subseteq \CC$ and $f(\CC)\subseteq \CC$. Then there exist two sequences of $1$-tiles $\{ \xi_{\minus i} \}_{i\in\N_0}, \, \{ \xi'_{\minus i'} \}_{i'\in\N_0}  \in \Sigma_{f,\,\CC}^-$ such that $f (\xi_0 ) = f (\xi'_0)$ and $\xi_{\minus i} = \xi_0 \neq \xi'_{\minus i'}$ for all $i, \, i'\in\N_0$.
\end{lemma}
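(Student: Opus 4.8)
The plan is to build the constant sequence $\{\xi_{\minus i}\}$ around a single $1$-tile that maps over itself, and to obtain the second sequence $\{\xi'_{\minus i'}\}$ as a greedy backward orbit avoiding that tile, exploiting that each step of such an orbit admits at least $\deg f\geq 2$ choices. The key combinatorial fact is: there is a $1$-tile $\xi_0\in\X^1(f,\CC)$ with $\xi_0\subseteq f(\xi_0)$. To establish it, recall that since $f(\CC)\subseteq\CC$ and $\post f\subseteq\CC$, the pair $\bigl(\DD^1(f,\CC),\DD^0(f,\CC)\bigr)$ is a cellular Markov partition, so each $1$-tile $X$ lies in a unique $0$-tile (its \emph{position}), while $f(X)$ is one of $X^0_\b,X^0_\w$ (its \emph{color}, by Proposition~\ref{propCellDecomp}~(i)); the condition $\xi_0\subseteq f(\xi_0)$ says exactly that the position and color of $\xi_0$ coincide. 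If no $1$-tile had this property, then the union of the white $1$-tiles, which equals $f^{-1}(X^0_\w)$ by Proposition~\ref{propCellDecomp}~(ii), would be contained in $X^0_\b$, and symmetrically $f^{-1}(X^0_\b)\subseteq X^0_\w$; intersecting gives $f^{-1}(\CC)\subseteq X^0_\b\cap X^0_\w=\CC$, and combined with $\CC\subseteq f^{-1}(\CC)$ (immediate from $f(\CC)\subseteq\CC$) this forces $f^{-1}(\CC)=\CC$, so the $1$-tiles would be merely the closures of the two complementary components of $\CC$ (Proposition~\ref{propCellDecomp}~(v)), whence $\card\X^1(f,\CC)=2$, contradicting $\card\X^1(f,\CC)=2\deg f\geq 4$ from Proposition~\ref{propCellDecomp}~(iv).

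Interchanging the colors $\b$ and $\w$ if necessary, I may assume $\xi_0\in\X_\w^1(f,\CC)$, so that $f(\xi_0)=X^0_\w\supseteq\xi_0$; then the constant sequence $\xi_{\minus i}\coloneqq\xi_0$, $i\in\N_0$, lies in $\Sigma_{f,\,\CC}^-$. For the second sequence I would first record the branching dictionary, immediate from the description of $1$-tiles above: for $1$-tiles $Y$ and $Z$ one has $f(Y)\supseteq Z$ if and only if the color of $Y$ equals the position of $Z$; hence, for any fixed $1$-tile $Z$, the set $\bigl\{Y\in\X^1(f,\CC):f(Y)\supseteq Z\bigr\}$ consists of all $1$-tiles of one fixed color and so has cardinality $\deg f\geq 2$ by \eqref{eqCardBlackNTiles}. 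Applying this with $Z=\xi_0$, whose position is white, there is a white $1$-tile $\xi'_0\neq\xi_0$, and then $f(\xi'_0)=X^0_\w=f(\xi_0)$ while $\xi'_0\neq\xi_0$.

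Finally, I would construct $\{\xi'_{\minus i'}\}_{i'\in\N_0}$ recursively: having chosen $\xi'_{\minus i'}$, its set of admissible predecessors $\bigl\{Y\in\X^1(f,\CC):f(Y)\supseteq\xi'_{\minus i'}\bigr\}$ has at least two elements by the branching dictionary, so I may pick $\xi'_{\minus(i'+1)}$ in it with $\xi'_{\minus(i'+1)}\neq\xi_0$. This yields a sequence in $\Sigma_{f,\,\CC}^-$ with $\xi'_{\minus i'}\neq\xi_0$ for every $i'\in\N_0$ (the base case being the choice of $\xi'_0$ above), which together with the constant sequence and $f(\xi_0)=f(\xi'_0)$ gives the claim. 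The only genuine obstacle is the first step, the existence of a $1$-tile whose color matches its position; once this is secured, the remainder is a routine greedy construction relying solely on $\deg f\geq 2$ and the standard structure of the cell decompositions $\DD^n(f,\CC)$ recalled in Proposition~\ref{propCellDecomp}.
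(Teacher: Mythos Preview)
Your proof is correct and takes a genuinely different route from the paper's. The paper proceeds by case analysis: it first shows that if the white $0$-tile $X^0_\w$ contains no white $1$-tile then $X^0_\w$ must equal a single black $1$-tile (and symmetrically), then observes that neither $0$-tile can equal a $1$-tile of its own color (else $f$ would not be expanding), and finally splits into two explicit cases---either one $0$-tile coincides with a $1$-tile of the opposite color, or each $0$-tile contains $1$-tiles of both colors---writing down eventually periodic sequences $\{\xi'_{\minus i'}\}$ by hand in each case.

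Your argument is more uniform: you bypass the case split by proving directly that some $1$-tile has matching color and position, via the clean contradiction that otherwise $f^{-1}(\CC)=\CC$ and hence $\card\X^1(f,\CC)=2<2\deg f$. The greedy construction of $\{\xi'_{\minus i'}\}$ via the ``branching dictionary'' is then automatic from $\deg f\geq 2$. This makes the role of the hypotheses more transparent and avoids any case analysis; the paper's approach, in exchange, gives explicit eventually periodic second sequences, though the subsequent application (Theorem~\ref{thmPerturbToStrongNonIntegrable}) does not seem to exploit that extra structure.
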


\begin{proof}
We first claim that if the white $0$-tile $X^0_\w \in\X^0$ does not contain a white $1$-tile, then there exists a black $1$-tile $X^1_\b \in \X^1_\b$ such that $X^1_\b = X^0_\w$.

Indeed, note that for each $1$-edge $e^1\in\E^1$, there exists a unique black $1$-tile $X_\b\in \X^1_\b$ and a unique white $1$-tile $X_\w\in \X^1_\w$ such that $X_\b \cap X_\w = e^1$. Suppose that $X^0_\w$ is a union  $X^0_\w = \bigcup_{i=1}^k X_i$ of $k$ distinct black $1$-tiles $X_i \in \X^1_\b$, $i\in\{1, \, 2, \, \dots, \, k\}$, then $\bigcup_{i=1}^k \partial X_i \subseteq \partial X^0_\w = \CC$. Since each of $\CC$ and $\partial X_i$, $i\in\{1, \, 2, \, \dots, \, k\}$, is a Jordan curve and $\partial X_j \neq \partial X_{j'}$ for $1 \leq j < j' \leq k$, we conclude that $k=1$, establishing the claim.

\smallskip

A similar statement holds if we exchange black and white.

Next, we observe that if the white $0$-tile $X^0_\w$ is also a white $1$-tile or the black $0$-tile $X^0_\b$ is also a black $1$-tile, then $f$ cannot be expanding.

Hence, it suffices to construct the sequences $\{ \xi_{\minus i} \}_{i\in\N_0}$ and $\{ \xi'_{\minus i'} \}_{i'\in\N_0}$ in the following two cases:

\smallskip

\emph{Case 1.} Either $X^0_\w = X^1_\b$ for some black $1$-tile $X^1_\b \in \X^1_b$ or $X^0_\b = X^1_\w$ for some white $1$-tile $X^1_\w \in \X^1_w$. Without loss of generality, we assume the former holds. Since $\deg f \geq 2$, we can choose a black $1$-tile $Y^1_\b \in \X^1_\b$ and a white $1$-tile $Y^1_\w \in \X^1_\w$ such that $Y^1_\b \cup Y^1_\w \subseteq X^0_\b$. Then we define $\xi_{\minus i} \coloneqq Y^1_\b$ for all $i\in\N_0$, $\xi'_{\minus i'} \coloneqq X^1_\b$ if $i'\in\N_0$ is even, and $\xi'_{\minus i'} \coloneqq Y^1_\w$ if $i'\in\N_0$ is odd.

\smallskip

\emph{Case 2.} There exist black $1$-tiles $X^1_\b, \, Y^1_\b \in \X^1_\b$ and white $1$-tiles $X^1_\w,  \, Y^1_\w \in \X^1_\w$ such that $X^1_\b \cup X^1_\w \subseteq X^0_\w$ and $Y^1_\b \cup Y^1_\w \subseteq X^0_\b$. Then we define $\xi_{\minus i} \coloneqq Y^1_\b$ for all $i\in\N_0$, $\xi'_0 \coloneqq X^1_\b$, and $\xi'_{\minus i'} \coloneqq X^1_\w$ for all $i'\in\N$.

\smallskip

It is trivial to check that in both cases, $\{ \xi_{\minus i} \}_{i\in\N_0},  \,  \{ \xi'_{\minus i'} \}_{i'\in\N_0}  \in \Sigma_{f,\,\CC}^-$,  $f  (\xi_0  ) = f  (\xi'_0 )$, and $\xi_{\minus i} = \xi_0 \neq \xi'_{\minus i'}$ for all $i, \, i'\in\N_0$.
\end{proof}

\begin{theorem}     \label{thmPerturbToStrongNonIntegrable}
Let $f\: S^2 \rightarrow S^2$ be an expanding Thurston map with a Jordan curve $\CC\subseteq S^2$ satisfying $\post f\subseteq \CC$ and $f(\CC)\subseteq \CC$. Let $d$ be a visual metric on $S^2$ for $f$ with expansion factor $\Lambda>1$. Fix $\alpha\in(0,1]$. Assume that $\Lambda^\alpha < \Lambda_0(f)$. Then there exists a constant $C_{\sharp} > 0$ such that for each $\varepsilon>0$ and each  real-valued H\"{o}lder continuous function $\varphi \in \Holder{\alpha}(S^2,d)$ with an exponent $\alpha$, there exist integers $N_0, \, M_0\in \N$, $M_0$-tiles $Y^{M_0}_\b \in \X^{M_0}_\b(f,\CC)$, $Y^{M_0}_\w \in \X^{M_0}_\w(f,\CC)$, and a real-valued H\"{o}lder continuous function $\phi \in \Holder{\alpha}(S^2,d)$ such that for each $\c\in\{\b, \, \w\}$, each integer $M\geq M_0$, and each $M$-tile $X\in\X^M(f,\CC)$ with $X\subseteq Y^{M_0}_\c$, there exist two points $x_1(X),\, x_2(X) \in X$ with the following properties:
\begin{enumerate}
\smallskip
\item[(i)] $\min \{ d(x_1(X), S^2 \setminus X), \, d(x_2(X), S^2 \setminus X), \, d(x_1(X), x_2(X)) \} \geq \varepsilon \diam_d(X)$.

\smallskip
\item[(ii)] for each integer $N' \geq N_0$, there exist two  $(N'+M_0)$-tiles $X^{N'+M_0}_{\c,1},\,  X^{N'+M_0}_{\c,2} \in \X^{N'+M_0}(f,\CC)$ such that $Y^{M_0}_\c = f^{N'}\bigl(  X^{N'+M_0}_{\c,1}  \bigr) =   f^{N'}\bigl(  X^{N'+M_0}_{\c,2}  \bigr)$, and that
\begin{equation}   \label{eqSNIBoundsPerturb}
\frac{ \Abs{  S_{N' }\phi ( \varsigma_1 (x_1(X)) ) -  S_{N' }\phi ( \varsigma_2 (x_1(X)) )  -S_{N' }\phi ( \varsigma_1 (x_2(X)) ) +  S_{N' }\phi ( \varsigma_2 (x_2(X)) )   }  } {d(x_1(X),x_2(X))^\alpha}
\geq \varepsilon,
\end{equation}
where we write $\varsigma_1 \coloneqq \bigl(f^{N'}\big|_{X^{N'+M_0}_{\c,1}} \bigr)^{-1}$ and $\varsigma_2 \coloneqq \bigl(f^{N'}\big|_{X^{N'+M_0}_{\c,2}} \bigr)^{-1}$. 

\smallskip
\item[(iii)] $\Hnorm{\alpha}{\phi - \varphi}{(S^2,d)} \leq C_{\sharp} \varepsilon$.
\end{enumerate}
\end{theorem}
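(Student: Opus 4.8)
The plan is to produce $\phi$ by adding to $\varphi$ a small, explicitly constructed ``bump'' perturbation supported near a carefully chosen point, so that the left-hand side of \eqref{eqSNIBoundsPerturb} is forced to be bounded below by the perturbation's contribution minus the (uniformly controlled) contribution of $\varphi$. First I would invoke Lemma~\ref{lmDisjointBackwardOrbits} to fix two sequences $\{\xi_{\minus i}\}_{i\in\N_0}$ and $\{\xi'_{\minus i'}\}_{i'\in\N_0}$ in $\Sigma_{f,\CC}^-$ with $f(\xi_0)=f(\xi'_0)$, $\xi_{\minus i}=\xi_0\neq\xi'_{\minus i'}$ for all $i,i'$. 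These give, for every $N'$, two distinct $(N'+M_0)$-tiles inside $\xi_0$ and $\xi'_0$ respectively whose image under $f^{N'}$ is a fixed $M_0$-tile; the branches $\varsigma_1,\varsigma_2$ are the corresponding inverse branches. Because $\xi_{\minus i}=\xi_0$ is constant along one sequence, the compositions $\varsigma_1$ stabilize geometrically (Lemma~\ref{lmMetricDistortion}), so the ``tail'' of $S_{N'}\phi\circ\varsigma_1$ is essentially independent of $N'$, and the same for $\varsigma_2$; the quantity in the numerator of \eqref{eqSNIBoundsPerturb} converges, as $N'\to\infty$, to a limiting bilinear-type expression in $\phi$ evaluated at $x_1(X),x_2(X)$ through the two branches. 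I would choose $M_0$ so that both $Y^{M_0}_\b$ and $Y^{M_0}_\w$ lie in the appropriate $0$-tiles dictated by the two sequences, and choose $N_0$ large enough that the geometric tails are within, say, $\varepsilon/10$ of their limits for all $N'\geq N_0$.

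The key quantitative input is the hypothesis $\Lambda^\alpha<\Lambda_0(f)$, equivalently (via $D_n(f,\CC)^{1/n}\to\Lambda_0(f)$) that $D_n(f,\CC)\Lambda^{-\alpha n}\to\infty$. Its role: a single ``unit'' bump of $\Holder{\alpha}$-norm $O(\varepsilon)$ placed on one $n_1$-tile $\xi_0$ contributes to $S_{N'}\phi\circ\varsigma_1$ a term of size comparable to $\varepsilon\,\diam_d(\xi_0)^\alpha$, while after normalizing by $d(x_1(X),x_2(X))^\alpha\asymp\diam_d(X)^\alpha$ one needs the bump to survive the rescaling; the point is that the bump is seen through a single branch $\varsigma_1$ but \emph{not} through $\varsigma_2$ (since $\xi_0\neq\xi'_{\minus i'}$), so there is no cancellation, and one gets a lower bound that does not degrade with $N'$. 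To make the bound uniform over \emph{all} $M\geq M_0$ and all $M$-tiles $X\subseteq Y^{M_0}_\c$ simultaneously — this is the subtle part — I would have the perturbation act not at a single scale but along the whole backward orbit: define $\phi=\varphi+\psi$ where $\psi$ is a sum $\sum_{k} t_k \rho_k$ of bumps $\rho_k$ supported on nested neighborhoods shrinking at rate $\Lambda^{-k}$ along the $\xi$-sequence, with coefficients $t_k$ chosen so that (a) $\sum_k t_k \Hseminorm{\alpha}{\rho_k}\lesssim\varepsilon$ (convergence of the Hölder norm, using the geometric decay and the standard fact that Hölder norms of rescaled bumps on $\Lambda^{-k}$-scale tiles are uniformly bounded), yet (b) for each scale the corresponding bump produces the required $\geq\varepsilon$ lower bound at that scale. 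The condition $\Lambda^\alpha<\Lambda_0(f)$ is exactly what guarantees there is enough ``room'' (enough tiles joining opposite sides, hence enough distinct inverse branches to separate) to fit these bumps without the Hölder norms blowing up.

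Then I would verify (i): the points $x_1(X),x_2(X)$ are chosen as the two ``centers'' of sub-tiles of $X$ coming from a two-step refinement, so by Lemma~\ref{lmCellBoundsBM}(i),(ii),(v) they are at distance $\gtrsim\diam_d(X)$ from $S^2\setminus X$ and from each other, with the implied constant depending only on $f,\CC,d$; shrinking $\varepsilon$ if necessary absorbs the constant. Property (iii) is immediate from construction (a): $\Hnorm{\alpha}{\phi-\varphi}{(S^2,d)}=\Hnorm{\alpha}{\psi}{(S^2,d)}\leq C_{27}\varepsilon$ with $C_{27}$ depending only on $f,\CC,d,\alpha$. Property (ii) is where the work is: expand $S_{N'}\phi\circ\varsigma_j=S_{N'}\varphi\circ\varsigma_j+S_{N'}\psi\circ\varsigma_j$; bound the four $\varphi$-terms' alternating sum by $C_1 d(x_1(X),x_2(X))^\alpha$ using Lemma~\ref{lmSnPhiBound} (this is $O(\diam_d(X)^\alpha)$, not small, so it must be \emph{beaten}, not neglected — hence the bumps must be chosen with amplitude a fixed large multiple of $C_1+\varepsilon$, and then \emph{that} fixes $C_{27}$), and bound the $\psi$-terms' alternating sum below using the non-cancellation across branches together with the geometric-tail estimate. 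The main obstacle is engineering the bump family $\{\rho_k,t_k\}$ so that the lower bound in (ii) holds \emph{uniformly in $M$ and $X$ at once} — i.e. the perturbation must ``look the same'' at every scale along the backward orbit — while keeping a single finite Hölder norm; this is precisely where $\Lambda^\alpha<\Lambda_0(f)$ enters, and where the case $\alpha=1$, $\Lambda=\Lambda_0(f)$ (excluded in Theorem~\ref{thmSNIGeneric}(ii)) would break the construction.
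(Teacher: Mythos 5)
Your proposal identifies the right ingredients---Lemma~\ref{lmDisjointBackwardOrbits}, a multi-scale bump perturbation supported along a fixed backward orbit, the role of $\Lambda^\alpha<\Lambda_0(f)$ in H\"{o}lder-norm convergence, and Lemma~\ref{lmSnPhiBound} for bounding the tail---but one step, as written, breaks the theorem, and the paper uses a specific mechanism that your proposal lacks.

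You write that the alternating $\varphi$-sum can be as large as $C_1\, d(x_1(X),x_2(X))^\alpha$ and therefore ``must be beaten, not neglected---hence the bumps must be chosen with amplitude a fixed large multiple of $C_1+\varepsilon$, and then that fixes $C_{27}$.'' But $C_1$ depends on $\Hseminorm{\alpha,\,(S^2,d)}{\varphi}$ via (\ref{eqC1Expression}), so if the bump amplitudes were $\gtrsim C_1$ then $\Hnorm{\alpha}{\phi-\varphi}{(S^2,d)}\gtrsim\Hseminorm{\alpha,\,(S^2,d)}{\varphi}$, which cannot be $\leq C_{27}\varepsilon$ for a $C_{27}$ chosen \emph{before} $\varphi$---and the theorem quantifies $C_{27}$ before both $\varepsilon$ and $\varphi$. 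The paper's proof avoids this with a conditional flag $\delta_X\in\{0,1\}$ assigned recursively at each scale, see (\ref{eqPfthmPerturbToStrongNonIntegrable_deltaX}): at a tile $X$ at scale $m_0$, one adds a bump only when the current iterate $\varphi_{m_0-1}$ fails the $\geq 2\varepsilon$ lower bound; if $\varphi_{m_0-1}$ already beats $2\varepsilon$, no bump is added there. In the ``bad'' case the existing contribution is $<2\varepsilon d(\cdot)^\alpha$ by hypothesis, so a bump of amplitude $\asymp\varepsilon$ (with the universal constant $C_{26}=4C^\alpha\Lambda^\alpha$) already pushes the total above $2\varepsilon$; the sign and size of the pre-existing $\varphi$-contribution never need to be controlled uniformly. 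The $\varphi$-dependent quantity $C_1$ enters only the choice of $N_0$, see (\ref{eqPfthmPerturbToStrongNonIntegrable_N0}), and the theorem is permitted to let $N_0$ depend on $\varphi$ and $\varepsilon$. Without this flag your bump amplitude and hence $C_{27}$ would inherit a dependence on $\varphi$, which is a genuine gap.

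A secondary, structural gap: you leave the construction of the individual bumps $\rho_k$ unspecified, flagging it as ``the main obstacle'' without resolving it. A naive bump of radius $\asymp\Lambda^{-n}$ and height $\asymp\varepsilon\Lambda^{-\alpha n}$ has $\alpha$-H\"{o}lder seminorm $\asymp\varepsilon$, but infinitely many such bumps nested around one point do not automatically sum to a finite H\"{o}lder seminorm: the oscillation of a single bump on a much smaller subtile must be shown to decay. The paper constructs $\Upsilon_{v,n}$ as a Cantor-staircase function on a nested family of tile-unions $\{U_{\underline i}\}$ indexed by finite words in $\{0,\dots,D_N-1\}$, arranged so that the oscillation of $\Upsilon_{v,n}$ on any $(n+mN)$-tile decays like $(D_N-1)^{-(m-1)}$ (Property~(c) of the bump functions). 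This is precisely where $\Lambda^\alpha<\Lambda_0(f)$ enters: after fixing $N$ so that $\Lambda^{\alpha N}<D_N(f,\CC)-1$ one gets $\rho=\Lambda^{\alpha N}(D_N-1)^{-1}<1$ (see (\ref{eqPfthmPerturbToStrongNonIntegrable_rho})), and the resulting geometric series in $m$ with ratio $\rho$ is what makes the H\"{o}lder seminorm of the full perturbation finite and $\asymp\varepsilon$, yielding Property~(iii) with a universal $C_{27}$.
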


\begin{proof}

Denote
\begin{equation}   \label{eqDefC26}
C_{\dagger} \coloneqq 4 C^\alpha \Lambda^\alpha > 1.
\end{equation}
Here $C\geq 1$ is the constant from Lemma~\ref{lmCellBoundsBM} depending only on $f$, $\CC$, and $d$.

Since $\Lambda^\alpha < \Lambda_0(f) = \lim_{n\to+\infty} D_n(f,\CC)^{1/n}$ (see (\ref{eqDefCombExpansionFactor})), we can fix $N\in\N$ large enough such that the following statements hold:
\begin{itemize}
\smallskip
\item $3< 3  C_{\dagger} C<\Lambda^{\alpha N} < D_N(f,\CC) - 1$.

\smallskip
\item There exist $u^1_\b,\, u^2_\b,\, u^1_\w,\, u^2_\w \in \V^N$ such that for all $\c \in \{ \b,  \, \w \}$,
\begin{equation}   \label{eqPfthmPerturbToStrongNonIntegrable_Flower}
\overline{W}^N \bigl( u^1_\c \bigr) \cup  \overline{W}^N \bigl( u^2_\c \bigr)   \subseteq \inte \bigl( X^0_\c \bigr)  \quad \mbox{ and }\quad    
\overline{W}^N \bigl( u^1_\c \bigr) \cap  \overline{W}^N \bigl( u^2_\c \bigr)    = \emptyset.   
\end{equation}
\end{itemize}

We denote $D_N \coloneqq D_N(f,\CC)$ in the remaining part of this proof.

It suffices to establish the theorem for $\varepsilon >0$ sufficiently small. Fix arbitrary
\begin{equation}   \label{eqPfthmPerturbToStrongNonIntegrable_varepsilon}
\varepsilon \in \bigl(0, C^{-2} \Lambda^{ - 2N } \bigr)  \subseteq (0,1).
\end{equation} 

We define the following constants
\begin{align}
\rho    & \coloneqq  \Lambda^{\alpha N} ( D_N - 1 )^{-1} \in (0,1), \label{eqPfthmPerturbToStrongNonIntegrable_rho}  \\
C_{\sharp} & \coloneqq 1 + C_{\dagger}   C \bigl( 4   (1-\rho)^{-1} + \Lambda^{ \alpha N} \bigl( 1 - \Lambda^{ - \alpha N} \bigr)^{-1} \bigr),  \label{eqDefC27}  \\
N_0    & \coloneqq  \bigl\lceil  \alpha^{ - 1 } \log_\Lambda \bigl( 2 C^2 \varepsilon^{-1 - \alpha} (\Hnorm{\alpha}{\varphi}{(S^2,d)} + \varepsilon C_{\sharp} ) C_0 / ( 1-\Lambda^{-\alpha} ) \bigr)  \bigr\rceil.    \label{eqPfthmPerturbToStrongNonIntegrable_N0}  
\end{align}
Here $C_0 > 1$ is the constant depending only on $f$, $\CC$, and $d$ from Lemma~\ref{lmMetricDistortion}.

Choose two sequences of $1$-tiles $\xi \coloneqq \{ \xi_{\minus i} \}_{i\in\N_0}  \in \Sigma_{f,\,\CC}^-$ and $\xi' \coloneqq \{ \xi'_{ \minus i'} \}_{i'\in\N_0}  \in \Sigma_{f,\,\CC}^-$ as in Lemma~\ref{lmDisjointBackwardOrbits} such that $f  ( \xi_0  ) = f  ( \xi'_0 )$ and $\xi_{\minus i} = \xi_0 \neq \xi'_{\minus i'}$ for all $i, \, i'\in\N_0$. We denote, for each $j\in\N$,
\begin{equation}  \label{eqPfthmPerturbToStrongNonIntegrable_tau}
\tau_j \coloneqq \bigl( f|_{\xi_{1-j}} \bigr)^{-1} \circ \cdots \circ ( f|_{\xi_{ \minus 1}} )^{-1} \circ ( f|_{\xi_{0}} )^{-1} \text{ and }
\tau'_j \coloneqq \bigl( f|_{\xi'_{1-j}} \bigr)^{-1} \circ \cdots \circ \bigl( f|_{\xi'_{ \minus 1}} \bigr)^{-1} \circ \bigl( f|_{\xi'_{0}} \bigr)^{-1}.
\end{equation}

Since $f$ is an expanding Thurston map and $\xi_0$ is a $1$-tile, we have $f  ( \xi_0  ) \supsetneq \xi_0$, for otherwise $\xi_0$ would have been an $n$-tile for each $n\in\N_0$. Thus, we can fix a constant
\begin{equation}  \label{eqPfthmPerturbToStrongNonIntegrable_M0} 
M_0  \geq    \alpha^{-1} \log_\Lambda \bigl( 2  C_{\dagger} \big/ \bigl( 1 - \Lambda^{-\alpha N} \bigr) \bigr)  
\end{equation}
large enough such that we can choose $Y^{M_0}_\b \in \X^{M_0}_\b$ and $Y^{M_0}_\w \in \X^{M_0}_\w$ with $Y^{M_0}_\b \cap Y^{M_0}_\w \neq \emptyset$ and
\begin{equation}   \label{eqPfthmPerturbToStrongNonIntegrable_Y_location}
Y^{M_0}_\b \cup Y^{M_0}_\w \subseteq \inte  ( f  ( \xi_0  )  ) \setminus \xi_0.
\end{equation}
We fix such $Y^{M_0}_\b \in \X^{M_0}_\b$ and $Y^{M_0}_\w \in \X^{M_0}_\w$. See Figure~\ref{figPerturb}.

\smallskip

\begin{figure}
    \centering
    \begin{overpic}
    [width=15cm, %grid, 
    tics=20]{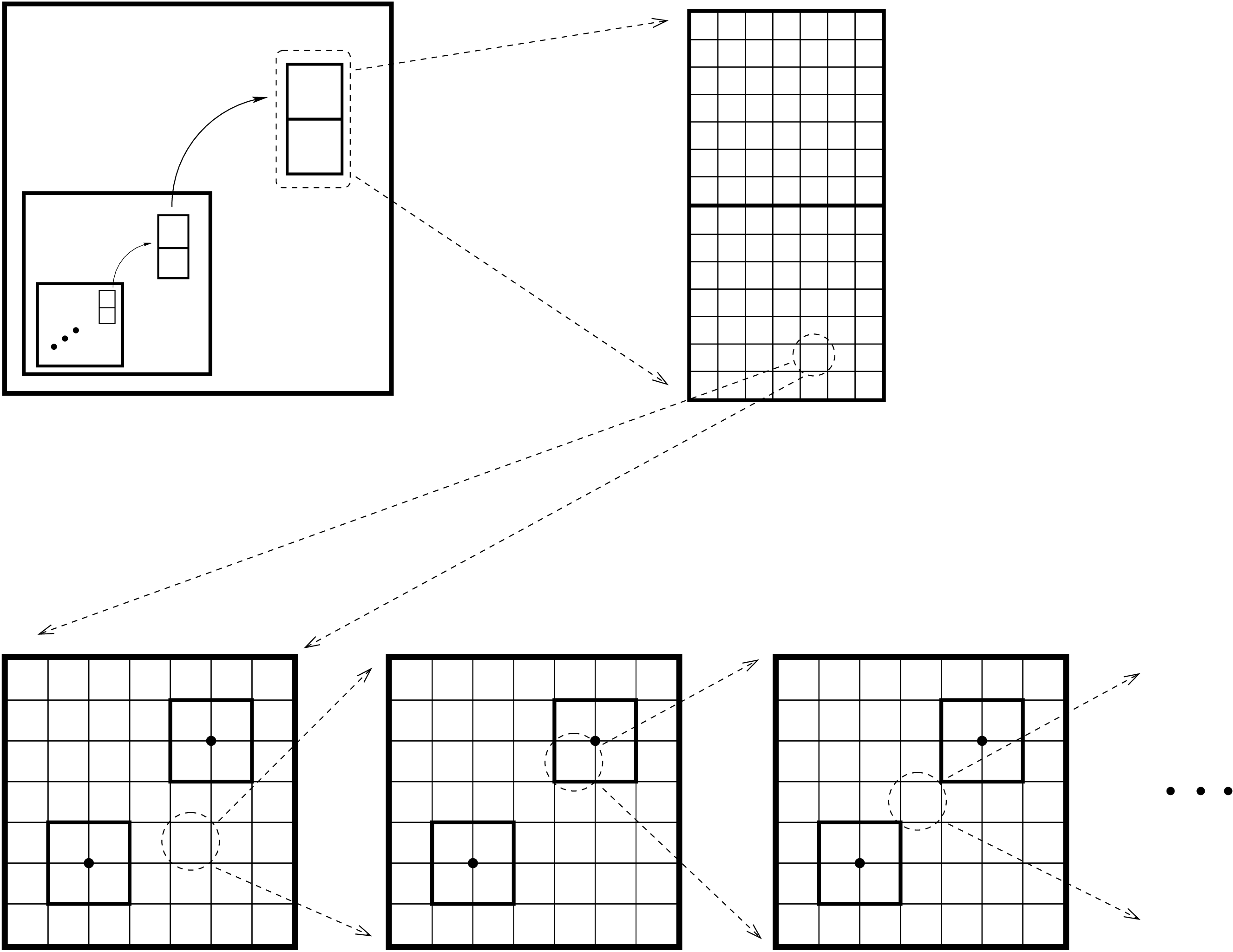}
    \put(100,316){$Y^{M_0}_\w$}
    \put(100,252){$Y^{M_0}_\b$}
    \put(62,288){$f$}
    \put(38,242){$f$}
    \put(105,202){$f(\xi_0)$}
    \put(59,207){$\xi_0$}
    \put(310,300){$Y^{M_0}_\w$}
    \put(310,210){$Y^{M_0}_\b$}
    \put(35,108){$X\in\X^{M_0 + N}$}
    \put(155,108){$X\in\X^{M_0 + 2N}$}
    \put(290,108){$X\in\X^{M_0 + 3N}$}
    \put(21,23){$v_1$}
    \put(62,66){$v_2$}
%    \put(154,23){$v_1$}    
%    \put(195,66){$v_2$}
%    \put(288,23){$v_1$}  
%    \put(330,66){$v_2$}       
    \end{overpic}
    \caption{Constructions for the proof of Theorem~\ref{thmPerturbToStrongNonIntegrable}.}
    \label{figPerturb}
\end{figure}

We want to construct, for each $n\in\N_0$ and each $(n+N)$-vertex $v\in \V^{n+N}$, a non-negative bump function $\Upsilon_{v,n} \: S^2 \rightarrow [0,+\infty)$ that satisfies the following properties:
\begin{enumerate}
\smallskip
\item[(a)] $\Upsilon_{v,n} (v) = C_{\dagger}\Lambda^{-\alpha n} \varepsilon$ and $\Upsilon_{v,n} (x) = 0$ if $x\in S^2 \setminus W^{n+N} (v)$.

\smallskip
\item[(b)] $\norm{\Upsilon_{v,n}}_{\CCC^0(S^2)} = C_{\dagger}\Lambda^{-\alpha n} \varepsilon$.

\smallskip
\item[(c)] For each $m\in\N$, each $X\in \X^{n+mN}$, and each pair of points $x, \, y\in X$,
\begin{equation}  \label{eqPfthmPerturbToStrongNonIntegrable_SingleBumpDiffInEachScale}
\abs{ \Upsilon_{v,n} (x) - \Upsilon_{v,n} (y) } \leq C_{\dagger} \Lambda^{-\alpha n} \varepsilon (D_N - 1)^{-(m-1)}.
\end{equation}
\end{enumerate}

\smallskip

Fix arbitrary $n\in\N_0$ and $v\in \V^{n+N}$.

In order to construct such $\Upsilon_{v,n}$, we first need to construct a collection of sets whose boundaries serve as level sets of $\Upsilon_{v,n}$. More precisely, we will construct a collection of closed subsets $\{U_{\underline{i}} \}_{\underline{i} \in I}$ of $W^{n+N}(v)$ indexed by
\begin{equation}  \label{eqPfthmPerturbToStrongNonIntegrable_I}
I \coloneqq \bigcup_{k\in \N} \{0, \, 1, \, \dots,  \, D_N-1 \}^k
\end{equation}
that satisfy the following properties:
\begin{enumerate}
\smallskip
\item[(1)] $U_{\underline{i}}$ is either $\{v\}$ or a nonempty union of $(n+(k+1)N)$-tiles if the \emph{length} of $\underline{i}\in I$ is $k\in\N$, i.e., $\underline{i} \in \{ 0,  \, 1,  \, \dots,  \, D_N - 1 \}^k$. Moreover, $U_{\underline{i}} = \{v\}$ if and only if $\underline{i} \eqqcolon (i_1, i_2, \dots, i_k) = (0, 0 , \dots, 0)$.

\smallskip
\item[(2)] $S^2 \setminus U_{\underline{i}}$ is a finite disjoint union of simply connected open sets for each $\underline{i}\in I$.

\smallskip
\item[(3)] $U_{(i_1,i_2,\dots, i_k)} = U_{(i_1,i_2,\dots, i_k,0)}$ for each $k\in\N$ and each $\underline{i} = (i_1,i_2,\dots, i_k) \in I$.

\smallskip
\item[(4)] $U_{\underline{i}} \subseteq \inter U_{\underline{j}}  \subseteq  U_{\underline{j}}  \subseteq  W^{n+N}(v)$ for all $\underline{i},  \, \underline{j} \in I$ with $\underline{i} < \underline{j}$.
\end{enumerate}
Here we say $\underline{i} < \underline{j}$, for $\underline{i} = (i_1, i_2, \dots, i_k) \in I$ and $\underline{j} = (j_1, j_2, \dots, j_{k'}) \in I$, if one of the following statements holds:
\begin{itemize}
\smallskip
\item $k<k'$, $i_l = j_l$ for all $l \in \N$ with $l\leq k$, and $j_{l'} \neq 0$ for some $l'\in\N$ with $k < l' \leq k'$. 

\smallskip
\item There exists $l'\in\N$ with $l' \leq \min\{k, k'\}$ such that $i_{l'} < j_{l'}$ and $i_l = j_l$ for all $l\in\N$ with $l< l'$.
\end{itemize}

We say $\underline{i} \leq \underline{j}$ for $\underline{i}, \underline{j} \in I$ if either $\underline{i} < \underline{j}$ or $\underline{i} = \underline{j}$.

We denote
\begin{equation}  \label{eqPfthmPerturbToStrongNonIntegrable_Ik}  
I_0 \coloneqq \emptyset, \text{ and } I_l \coloneqq \bigcup_{k=1}^l  \{1,  \, \dots,  \, D_N - 1\}^k \text{ for each } l\in\N.
\end{equation}
\smallskip

We construct $U_{\underline{i}}$ recursively on the length of $\underline{i}\in I$. 

We set $U_{(0)} \coloneqq \{v\}$. For $\underline{i} = (i_1)$, $i_1\in \{1,  \, \dots,  \, D_N - 1\}$, we define a connected closed set 
\begin{align*}
U_{(i_1)}  \coloneqq \bigcup \biggl\{  X_{i_1} & :    \text{there exist } X_1,  \, X_2,  \, \dots,  \, X_{i_1} \in \X^{n+2N} \\
                                                                                           &\quad \text{ such that }  \bigcup_{m=1}^{i_1} X_m \text{ is connected and } v \in   X_1   \biggr\}.
\end{align*}
Note that $U_{(i_1)} \subseteq W^{n+N}(v)$ for $i_1\in \{1,  \, 2,  \, \dots,  \, D_N - 1\}$ since otherwise there would exist $X_1,  \, X_2,  \, \dots,  \, X_{i_1} \in \X^{n+2N}$ such that the union $\bigcup_{m=1}^{i_1} f^{n+N}(X_m)$ of $N$-tiles $f^{n+N}(X_m)\in \X^N$ (see Proposition~\ref{propCellDecomp}~(i)), $m \in \{1, \, 2, \, \dots, \, i_1\}$, is connected and joins opposite sides of $\CC$ which is impossible due to the definition of $D_N$ (see (\ref{eqDefDn})). Then Properties~(1), (2), and (4) hold for $\underline{i}, \, \underline{j} \in \{0,  \, 1,  \, \dots,  \, D_N - 1\}^1$ by our construction.

\begin{figure}
    \centering
    \begin{overpic}
    [width=15cm, %grid, 
    tics=20]{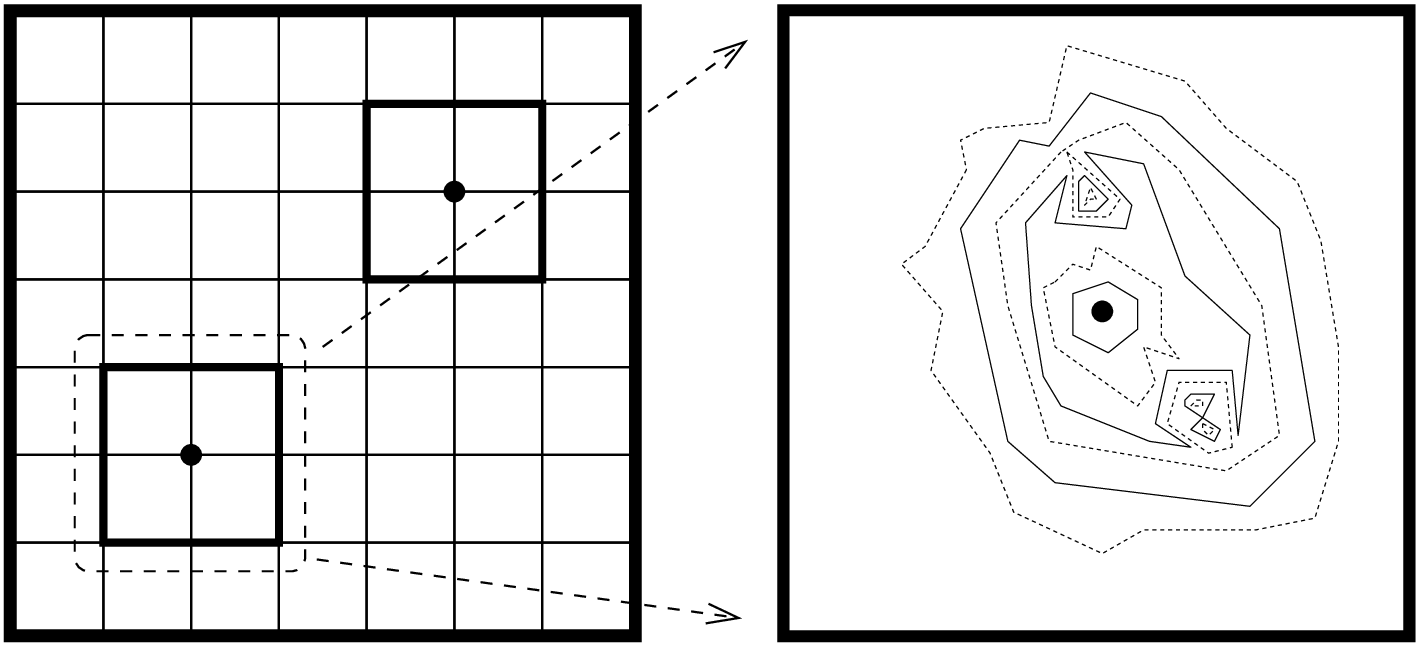}
    \put(47,45){$v_1(X)$}
    \put(126,142){$v_2(X)$}
    \put(5,-10){$X\in\X^{M_0 + N}$}
    \put(239,9){$W^{M_0 + 2 N} (v_1(X))$}
    \end{overpic}
    \caption{Level sets $\partial U_{(i_1)}$, $i_1\in\{1, \, 2, \, \dots,  \, D_N - 1\}$, of $\Upsilon_{v_1(X),\,M_0 + N}$.}
    \label{figBumpFn1}
%\end{figure}

%\begin{figure}
    \centering
    \begin{overpic}
    [width=15cm, %grid, 
    tics=20]{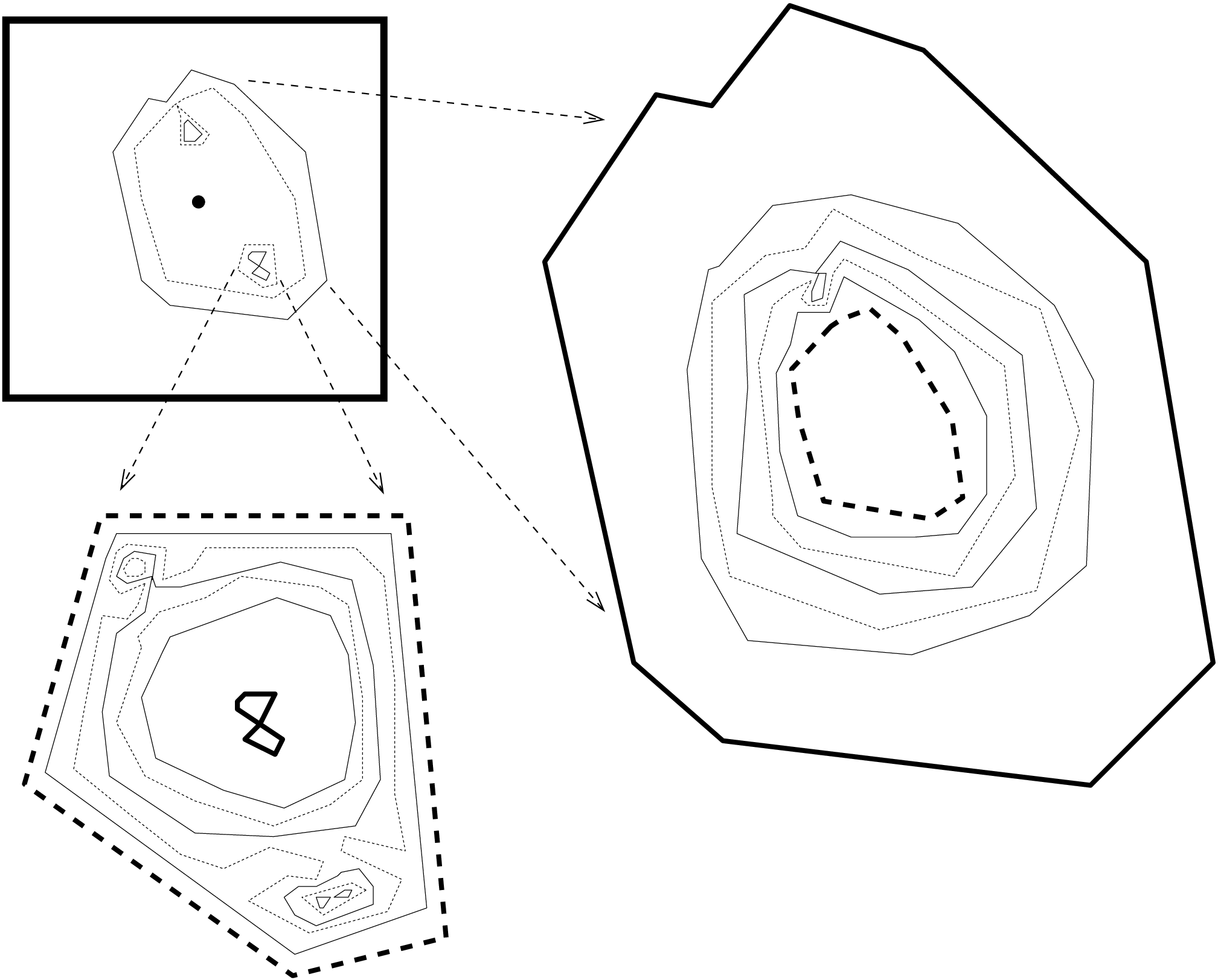}
    \put(60,278){$v_1(X)$}
    \put(5,324){$W^{M_0 + 2 N} (v_1(X))$}
    \end{overpic}
    \caption{Level sets $\partial U_{(4,i_2)}$, $i_2\in\{1, \, 2, \, \dots,  \, D_N - 1\}$, of $\Upsilon_{v_1(X),\,M_0 + N}$.}
    \label{figBumpFn2}
\end{figure}

Assume that we have constructed $U_{\underline{i}} \subseteq W^{n+N}(v)$ for each $\underline{i} \in I_l$ for some $l\in\N$, that Property~(3) holds for each $\underline{i}\in I_{l-1}$, and that Properties~(1), (2), and (4) hold for all $\underline{i},  \, \underline{j} \in I_l$.

Fix arbitrary $\underline{i} = (i_1, i_2, \dots, i_l ) \in  \{0,  \, 1, \,  \dots,  \, D_N - 1\}^l$ and $i_{l + 1} \in \{1,  \, 2,  \, \dots,  \, D_N - 1\}$. Denote $\underline{j} \coloneqq (i_1, i_2, \dots, i_l, i_{l + 1})$. Set $U_{(i_1, i_2, \dots, i_l, 0 )} \coloneqq U_{\underline{i}}$. We define a connected closed set
\begin{align*}
U_{\underline{j}}  \coloneqq U_{\underline{i}}  \cup 
                           \bigcup \biggl\{  X_{i_{l + 1}}  & :    \text{there exist } X_1,  \, X_2,  \, \dots,  \, X_{i_{l + 1}} \in \X^{n+(l + 2)N} \\
                                                                                               &\quad \text{ such that }  \bigcup_{m=1}^{i_{l + 1}} X_m \text{ is connected and } U_{\underline{i}} \cap  X_1 \neq \emptyset   \biggr\}.
\end{align*} 

\smallskip

\emph{Claim~1.} $U_{\underline{j}} \subseteq \inter U_{(i_1, i_2, \dots, i_{l-1}, 1 + i_l)}$ if $i_l \neq D_N - 1$, and $U_{\underline{j}} \subseteq W^{n+N}(v)$ if $i_l = D_N - 1$.

\smallskip

We first establish Claim~1 in the case $i_l \neq D_N - 1$. Denote $\underline{i'} \coloneqq (i_1, i_2, \dots, i_{l-1}, 1 + i_l)$. By Property~(1) of $\{U_{\underline{i}} \}_{\underline{i} \in I_l}$, $U_{\underline{i}}$ and $U_{\underline{i'}}$ are unions of $(n + (l+1)N)$-tiles. By Property~(4) of $\{U_{\underline{i}} \}_{\underline{i} \in I_l}$, $U_{\underline{i}} \subseteq \inter U_{\underline{i'}}$, so $\partial U_{\underline{i}} \cap \partial U_{\underline{i'}} = \emptyset$. We argue by contradiction and assume that $U_{\underline{j}} \nsubseteq \inter U_{\underline{i'}}$. Then there exist $X_1,  \, X_2,  \, \dots,  \, X_{i_{l+1}} \in \X^{n+(l+2)N}$ such that the union $K \coloneqq \bigcup_{m=1}^{i_{l + 1}} X_m$ is a connected set that intersects both $\partial U_{\underline{i}}$ and $\partial U_{\underline{i'}}$ nontrivially. Then $K$ cannot be a subset of a single $(n+(l+1)N)$-flower (of an $(n+(l+1)N)$-vertex).  Since each connected component of the preimage of a $0$-flower under $f^{n+(l+1)N}$ is an $(n+(l+1)N)$-flower, we observe that $f^{n+(l+1)N}(K)$ cannot be a subset of a single $0$-flower (of a $0$-vertex), or equivalently (see \cite[Lemma~5.33]{BM17}), $f^{n+(l+1)N}(K)$ joins opposite sides of $\CC$. Since $f^{n+(l+1)N}(K) = \bigcup_{m=1}^{i_{l + 1}} f^{n+(l+1)N} (X_m)$ is connected, $\bigl\{  f^{n+(l+1)N} (X_m)  :  m\in\{1, \, 2, \, \dots, \, i_{l+1} \}  \bigr\}    \subseteq \X^N$ (see Proposition~\ref{propCellDecomp}~(i))
, and $i_{l+1} \leq D_N - 1$, we get a contradiction to the definition of $D_N$ (see (\ref{eqDefDn})).

Claim~1 is now proved in the case $i_l \neq D_N - 1$. The argument for the proof of the case $i_l = D_N - 1$ is similar, and we omit it here.

\smallskip

By Claim~1 and Property~(4) of $\{U_{\underline{i}} \}_{\underline{i} \in I_l}$, we have  $U_{\underline{j}} \subseteq W^{n+N}(v)$.

Then Properties~(1) and (2) hold for each $\underline{i} \in \{0,  \, 1,  \, \dots,  \, D_N - 1\}^{l+1}$, Property~(3) holds for each $\underline{i} \in \{0,  \, 1,  \, \dots,  \, D_N - 1\}^l$. In order to verify Property~(4) of $\{U_{\underline{i}} \}_{\underline{i} \in I_{l+1}}$, it suffices to observe that by Claim~1 and our construction, for all $\underline{j} \in I_l$ and 
\begin{equation*}
	i_1,  \, i_2,  \, \dots,  \, i_l, i_{l+1},  \, i'_{l+1} \in \{0,  \, 1,  \, \dots,  \, D_N - 1\}
\end{equation*}
with $1 \leq i_{l+1} < i'_{l+1}$ and $\underline{i} \coloneqq (i_1, i_2, \dots, i_l) < \underline{j}$, we have
\begin{equation*}
U_{\underline{i}} \subseteq \inter U_{\underline{i^1}}   \subseteq  U_{\underline{i^1}}  \subseteq \inter U_{\underline{i^2}}  \subseteq U_{\underline{i^2}}  \subseteq \inter U_{\underline{j}},
\end{equation*}
where $\underline{i^1} \coloneqq (i_1, i_2, \dots, i_l, i_{l+1})$ and $\underline{i^2} \coloneqq (i_1, i_2, \dots, i_l, i'_{l+1})$.

The construction of $\{U_{\underline{i}} \}_{\underline{i} \in I}$ and the verification of Properties~(1) through (4) is now complete.

\smallskip

We can now construct the bump function $\Upsilon_{v,\,n} \: S^2 \rightarrow [0, +\infty)$ and verify that it satisfies Properties~(a) through (c) of the bump functions.

We define 
\begin{equation}   \label{eqPfthmPerturbToStrongNonIntegrable_UpsilonDefOnBoundary}
\Upsilon_{v,\,n}(v) \coloneqq C_{\dagger} \Lambda^{-\alpha n} \varepsilon 
\qquad\text{ and } \qquad 
\Upsilon_{v,\,n}(x) \coloneqq 0 \text{ if } x\in S^2 \setminus U_{(D_N-1)}.
\end{equation}
Property~(a) of the bump functions follows from Property~(4) of $\{U_{\underline{i}} \}_{\underline{i} \in I}$.

We denote, for each $k\in\N$,
\begin{equation*}    
I^*_k \coloneqq \{ (i_1, i_2, \dots, i_k) \in I_k  :  i_k \neq 0,  \, i_l \neq D_N -1 \text{ for } 1 \leq l < k \}.
\end{equation*}
Define $I^* \coloneqq \bigcup_{k\in\N}  I^*_k$.

For arbitrary $k\in \N$ and $\underline{i} = (i_1, i_2, \dots, i_k) \in I^*_k$, we define a subset $A_{\underline{i}}$ of $W^{n+N} (v)$ by
\begin{equation} \label{eqPfthmPerturbToStrongNonIntegrable_Annulus}
A_{\underline{i}}  \coloneqq U_{(i_1, i_2, \dots, i_{k-1}, i_k)} \setminus U_{(i_1, i_2, \dots, i_{k-1}, i_k - 1, D_N - 1)}.
\end{equation}
In particular $A_{(i_1)}  = U_{(i_1)} \setminus U_{ (i_1 - 1, D_N - 1) }$ for $i_1 \in \{1, \, 2,  \, \dots,  \, D_N - 1 \}$. We note that by Property~(4) of $\{U_{\underline{i}} \}_{\underline{i} \in I}$,
\begin{equation} \label{eqPfthmPerturbToStrongNonIntegrable_DistinctADisjoint}
A_{\underline{i}}  \cap  A_{\underline{j}}  = \emptyset    \qquad\text{for all } \underline{i}, \underline{j}  \in I^* \text{ with } \underline{i} \neq \underline{j}.
\end{equation}
Thus, we define, for each $k\in \N$ and each $\underline{i} = (i_1, i_2, \dots, i_k) \in I^*_k$,
\begin{equation}  \label{eqPfthmPerturbToStrongNonIntegrable_UpsilonDefOnAnnulus}
\Upsilon_{v, \, n}(x) \coloneqq C_{\dagger} \Lambda^{-\alpha n} \varepsilon \bigg( 1 - \sum_{j=1}^k \frac{ i_j } { (D_N - 1)^j } \biggr) 
\end{equation}
for each $x \in A_{\underline{i}}$.

With abuse of notation, for each $\underline{i}\in I^*$, we write $\Upsilon_{v,\,n}( A_{\underline{i}} ) \coloneqq \Upsilon_{v,\,n}(x)$ for any $x\in A_{\underline{i}}$.

So far we have defined $\Upsilon_{v,\,n}$ on
\begin{equation}   \label{eqPfthmPerturbToStrongNonIntegrable_UnionA}
\mathfrak{U} \coloneqq  \{v\} \cup \bigl(S^2 \setminus U_{(D_N - 1)}  \bigr) \cup \bigcup_{\underline{i} \in I^*}  A_{\underline{i}} .
\end{equation}

\smallskip

\emph{Claim~2.} The set $\mathfrak{U}$ contains all vertices, i.e., $\bigcup_{k\in\N_0} \V^k  \subseteq  \mathfrak{U}$.

\smallskip

In order to establish Claim~2, it suffices to show that $x \in \mathfrak{U}$ for each $x\in \V^{n + (m+1) N} \cap U_{(D_N - 1)}  \setminus \{v\}$ and each $m\in\N$. We fix an arbitrary integer $m\in\N$ and an arbitrary vertex $x\in \V^{n + (m+1) N} \cap U_{(D_N - 1)} \setminus \{v\}$. We choose a sequence $\{i_k\}_{k\in\N}$ in $\{0,  \, 1,  \, \dots,  \, D_N - 2\}$ recursively as follows:

Let $i_1$ be the largest integer in $\{0, \, 1, \,  \dots,  \, D_N - 2\}$ with $x\notin U_{(i_1)}$. Assume that we have chosen $\{ i_k \}_{k=1}^l$ in $\{0, \, 1,  \, \dots,  \, D_N - 2\}$ for some $l\in\N$ with the property that $x \notin U_{(i_1,i_2,\dots,i_l)}$ and $x \in U_{(i_1,i_2,\dots,i_{l-1}, 1 + i_l)}$, then by Properties~(3) and (4) of  $\{U_{\underline{i}} \}_{\underline{i} \in I}$, we can choose $i_{l+1}$ to be the largest integer in $\{0, \, 1,  \, \dots,  \, D_N - 1\}$ with $x\notin U_{(i_1, i_2, \dots, i_{l+1})}$. Assume that $i_{l+1} = D_N - 1$. Thus, $(i_1, i_2, \dots, i_{l-1}, 1 + i_l) \in I^*$ and $x\in U_{(i_1, i_2, \dots, i_{l-1}, 1 + i_l)}  \setminus U_{(i_1, i_2, \dots, i_l, D_N - 1)} = A_{(i_1, i_2, \dots, i_{l-1}, 1 + i_l)}$. 

So we can assume, without loss of generality, that $i_{k} \neq D_N - 1$ for all $k\in\N$, i.e., $\{ i_k:  k\in \N \} \subseteq \{ 0,  \, 1,  \, \dots,  \, D_N - 2\}$ can be constructed above. Then $x \in U_{(i_1, i_2, \dots, i_{m-1}, 1 + i_m)}$. Since both $U_{(i_1, i_2, \dots, i_{m-1}, 1 + i_m)}$ and $U_{(i_1, i_2, \dots,  i_m)}$ are unions of $(n + (m+1) N)$-tiles (see Property~(1) of $\{U_{\underline{i}} \}_{\underline{i} \in I}$), we can see that $x\notin U_{(i_1, i_2, \dots,  i_m, D_N - 1)}$ since otherwise there would exist $X_1,  \, X_2,  \, \dots,  \, X_{D_N - 1} \in \X^{n + (m + 2) N}$ such that the union $K \coloneqq \bigcup_{k=1}^{D_N - 1} X_k$ is connected and have nontrivial intersections with $U_{(i_1, i_2, \dots,i_m)}$ and $\{x\}$, and consequently $K\cap \partial W^{n + (m + 1) N} (x) \neq \emptyset$. This is impossible since $f^{n + (m + 1) N} (K)$, as a union of $N$-tiles $f^{n + (m + 1) N}(X_l)$ (see Proposition~\ref{propCellDecomp}~(i)), $l\in\{1, \, 2, \, \dots,  \, D_N - 1\}$, cannot join opposite sides of $\CC$ due to the definition of $D_N$ in (\ref{eqDefDn}). Hence, $(i_1, i_2, \dots, i_{m-1}, 1 + i_m) \in I^*$ and $x\in U_{(i_1, i_2, \dots, i_{m-1}, 1 + i_m)}  \setminus U_{(i_1, i_2, \dots, i_m, D_N - 1)} = A_{(i_1, i_2, \dots, i_{m-1}, 1 + i_m)}$. Claim~2 is now established.

\smallskip

\emph{Claim~3.} For the function $\Upsilon_{v,\,n}$ defined on $\mathfrak{U}$, inequality~(\ref{eqPfthmPerturbToStrongNonIntegrable_SingleBumpDiffInEachScale}) holds for each $m\in\N$, each $X\in \X^{n+mN}$, and each pair of points $x, \, y\in X\cap \mathfrak{U}$.

\smallskip

Fix arbitrary $m\in\N$, $X\in\X^{n+mN}$, and $x, \, y\in X\cap \mathfrak{U}$. Inequality~(\ref{eqPfthmPerturbToStrongNonIntegrable_SingleBumpDiffInEachScale}) holds for $x, \, y \in X \cap \mathfrak{U}$ trivially if $m=1$ by (\ref{eqPfthmPerturbToStrongNonIntegrable_UpsilonDefOnBoundary}) and (\ref{eqPfthmPerturbToStrongNonIntegrable_UpsilonDefOnAnnulus}). So without loss of generality, we can assume $m\geq 2$. We choose a sequence $\{i_k\}_{k\in\N}$ in $\{0,  \, 1,  \, \dots,  \, D_N - 1\}$ recursively as follows:

Let $i_1$ be the largest integer in $\{0, \, 1,  \, \dots,  \, D_N - 1\}$ with $X \nsubseteq U_{(i_1)}$. Assume that we have chosen $\{ i_k \}_{k=1}^l$ for some $l\in\N$ with the property that $X \nsubseteq U_{(i_1,i_2,\dots,i_l)}$, then by Properties~(3) and (4) of  $\{U_{\underline{i}} \}_{\underline{i} \in I}$, we can choose $i_{l+1}$ to be the largest integer in $\{0, \, 1,  \, \dots,  \, D_N - 1\}$ with $X \nsubseteq U_{(i_1, i_2, \dots, i_{l+1})}$.

We establish Claim~3 by considering the following two cases:

\smallskip

\emph{Case 1.} $i_k = D_N - 1$ for some integer $k\in [1, m-1]$. Without loss of generality, we assume that $k$ is the smallest such integer. Recall that $m\geq 2$. If $k=1$, then by Property~(1) of $\{U_{\underline{i}} \}_{\underline{i} \in I}$, $X\subseteq \bigl(S^2 \setminus \inter U_{(D_N - 1)} \bigr) \subseteq \bigl(S^2 \setminus  U_{(D_N - 1)} \bigr) \cup A_{(D_N - 1)}$, and consequently $\Upsilon(x) = 0 = \Upsilon(y)$ by (\ref{eqPfthmPerturbToStrongNonIntegrable_UpsilonDefOnBoundary}) and (\ref{eqPfthmPerturbToStrongNonIntegrable_UpsilonDefOnAnnulus}). If $k \geq 2$, then $(i_1, i_2, \dots, i_{k-2}, 1 + i_{k-1} ),  \, (i_1, i_2, \dots, i_{k-1}, D_N - 1 )  \in I^*$, and
\begin{equation*}
X   \subseteq U_{(i_1, i_2, \dots, i_{k-2}, 1 + i_{k-1} )} \setminus \inter U_{(i_1, i_2, \dots, i_{k-1}, D_N - 1 )}  
      \subseteq A_{(i_1, i_2, \dots, i_{k-2}, 1 + i_{k-1} )} \cup A_{(i_1, i_2, \dots, i_{k-1}, D_N - 1 )}
\end{equation*}
by our choice of $i_{k-1}$, the fact that both $U_{(i_1, i_2, \dots, i_{k-2}, 1 + i_{k-1} )}$ and $U_{(i_1, i_2, \dots, i_{k-1}, D_N - 1 )}$ are unions of $(n + (k+1) N)$-tiles (by Property~(1) of $\{U_{\underline{i}} \}_{\underline{i} \in I}$), and (\ref{eqPfthmPerturbToStrongNonIntegrable_Annulus}).  Hence, by (\ref{eqPfthmPerturbToStrongNonIntegrable_UpsilonDefOnAnnulus}), $\Upsilon_{v, \, n}(x) = C_{\dagger} \Lambda^{-\alpha n} \varepsilon \bigl( 1 - \sum_{j=1}^k \frac{ i_j } { (D_N - 1)^j } \bigr) = \Upsilon_{v, \, n}(y)$.

\smallskip

\emph{Case 2.} $i_k \leq D_N - 2$ for all integer $k\in [1, m-1]$. Then by our choice of $i_{m-1}$ and Properties~(1) and (4) of $\{U_{\underline{i}} \}_{\underline{i} \in I}$,
\begin{equation}   \label{eqPfthmPerturbToStrongNonIntegrable_XinAnnulus}
X \subseteq U_{(i_1, i_2, \dots, i_{m-2}, 1 + i_{m-1})}  \setminus \inter U_{(i_1, i_2, \dots,  i_{m-1})}  \subseteq U_{(i_1, i_2, \dots, i_{m-2}, 1 + i_{m-1})}  \setminus U_{\underline{j}}
\end{equation}
for each $\underline{j} \in I$ with $\underline{j} < (i_1, i_2, \dots,  i_{m-1})$.

Note that by (\ref{eqPfthmPerturbToStrongNonIntegrable_Annulus}) and Property~(4) of $\{U_{\underline{i}} \}_{\underline{i} \in I}$,
\begin{equation}  \label{eqPfthmPerturbToStrongNonIntegrable_AnnulusInU}
A_{\underline{i}} \subseteq U_{\underline{j}}  \text{ for all } \underline{i} \in I^* \text{ and } \underline{j}\in I \text{ with } \underline{i} \leq \underline{j}.
\end{equation}
By (\ref{eqPfthmPerturbToStrongNonIntegrable_UpsilonDefOnBoundary}) and (\ref{eqPfthmPerturbToStrongNonIntegrable_UpsilonDefOnAnnulus}),
\begin{equation}  \label{eqPfthmPerturbToStrongNonIntegrable_UpsilonMonotonicity}
\Upsilon_{v,\,n}(A_{\underline{i}})  \geq \Upsilon_{v,\,n} \bigl( A_{\underline{j}} \bigr)  \text{ for all } \underline{i},  \, \underline{j} \in I^* \text{ with } \underline{i} \leq \underline{j}.
\end{equation}

Thus, by (\ref{eqPfthmPerturbToStrongNonIntegrable_XinAnnulus}), (\ref{eqPfthmPerturbToStrongNonIntegrable_AnnulusInU}), and (\ref{eqPfthmPerturbToStrongNonIntegrable_UpsilonMonotonicity}),
\begin{align*}
              \abs{ \Upsilon_{v,\,n} (x) - \Upsilon_{v,\,n} (y) } 
&\leq     \inf  \bigl\{  \Upsilon_{v,\,n} ( A_{\underline{i}} )    :    \underline{j} \in I, \, \underline{i} \in I^*,  \, \underline{i} \leq \underline{j} < (i_1, i_2, \dots, i_{m-1}) \bigr\}    \\
         &\quad  - \inf  \{  \Upsilon_{v,\,n} ( A_{\underline{i}} )    :    \underline{i} \in I^*,  \, \underline{i} \leq  (i_1, i_2, \dots, i_{m-2}, 1 + i_{m-1})  \}   \\
&\leq   C_{\dagger} \Lambda^{-\alpha n} \varepsilon (D_N - 1)^{-(m-1)},
\end{align*}
where the last identity follows easily from (\ref{eqPfthmPerturbToStrongNonIntegrable_UpsilonDefOnAnnulus}) and the definition of $I^*$ by separate explicit calculations depending on $i_{m-1} = 0$ or not.

Claim~3 is now established.

\smallskip

\emph{Claim~4.} The function $\Upsilon_{v,\,n}$ is continuous on $\mathfrak{U}$.

\smallskip

Fix arbitrary $x, \, y\in \mathfrak{U}$ and $m\in\N$ with $x\neq y$ and $y\in U^{n + m N}(x)$ (see (\ref{defU^n})). Then there exist $X_1,\, X_2 \in \X^{n + m N}$ such that $x\in X_1$, $y\in X_2$, and $X_1 \cap X_2 \neq \emptyset$. It follows immediately from Definition~\ref{defcelldecomp}~(iii) that there exists an $( n + m N )$-vertex $z$ in $X_1 \cap X_2$. Then by Claim~2 and Claim~3, 
\begin{align*}
             \abs{ \Upsilon_{v,\,n}(x)  -  \Upsilon_{v,\,n}(y) } 
&\leq   \abs{ \Upsilon_{v,\,n}(x)  -  \Upsilon_{v,\,n}(z) }   +  \abs{ \Upsilon_{v,\,n}(z)  -  \Upsilon_{v,\,n}(y) }   \\
&\leq   2 C_{\dagger} \Lambda^{-\alpha n} \varepsilon (D_N - 1)^{-(m-1)}. 
\end{align*}
Hence, Claim~4 follows from Lemma~\ref{lmCellBoundsBM}~(iv) and the fact that $D_N - 1 > 1$.

\smallskip

Since we have defined $\Upsilon_{v,n}$ continuously on a dense subset $\mathfrak{U}$ of $S^2$ by Claim~2 and Claim~4, we can now extend $\Upsilon_{v,n}$ continuously to $S^2$. Property~(b) of the bump functions follows immediately from (\ref{eqPfthmPerturbToStrongNonIntegrable_UpsilonDefOnBoundary}) and (\ref{eqPfthmPerturbToStrongNonIntegrable_UpsilonDefOnAnnulus}). Property~(c) of the bump functions follows from Claim~3.

\smallskip

Recall $u^1_\b,  \, u^2_\b,  \, u^1_\w,  \, u^2_\w \in \V^N$ defined above.

For each $n\in\N_0$, each $n$-tile $X\in\X^n$, and each $i\in\{1, \, 2\}$, we define a point
\begin{equation}   \label{eqPfthmPerturbToStrongNonIntegrable_vi}
v_i(X) \coloneqq  \begin{cases} (f^n|_X)^{-1} \bigl( u^i_\b \bigr) & \text{if } X\in\X^n_\b, \\ (f^n|_X)^{-1}  \bigl( u^i_\w  \bigr)   & \text{if } X\in\X^n_\w.  \end{cases}
\end{equation}

Fix an arbitrary real-valued H\"{o}lder continuous function $\varphi \in \Holder{\alpha}(S^2,d)$ with an exponent $\alpha$. 

We are going to construct $\phi \in \Holder{\alpha}(S^2,d)$ for the given $\varphi$ by defining their difference $\Upsilon \in \Holder{\alpha}(S^2,d)$ supported on the (disjoint) backward orbits of $Y^{M_0}_\b \cup Y^{M_0}_\w$ along $\{ \xi_{\minus i} \}_{i\in\N_0}$ as the sum of a collection of non-negative bump functions constructed above.

We construct $\varphi_m \in  \Holder{\alpha}(S^2,d)$ recursively on $m\in\N_0$.

Set $\varphi_0 \coloneqq \varphi$.

Assume that $\varphi_i \in \Holder{\alpha}(S^2,d)$ has been constructed for some $i\in\N_0$, we define a number $\delta_X\in\{0, \, 1\}$, for each $X\in\X^{M_0 + (i+1) N}$ with $X\subseteq Y^{M_0}_\b \cup Y^{M_0}_\w$, by
\begin{equation} \label{eqPfthmPerturbToStrongNonIntegrable_deltaX}
\delta_X \coloneqq  \begin{cases} 1 & \text{if } \Absbig{ (\varphi_i)^{f,\,\CC}_{\xi,\,\xi'}   ( v_1(X), v_2(X) ) } < 2 \varepsilon d( v_1(X), v_2(X) )^\alpha,  \\ 0    &  \text{otherwise}.  \end{cases}
\end{equation} 
We define
\begin{equation}  \label{eqPfthmPerturbToStrongNonIntegrable_varphi_i}
\varphi_{i+1} \coloneqq \varphi_i  +  \sum_{j\in\N}        \sum\limits_{\substack{X \in\X^{M_0 + (i+1) N} \\X \subseteq Y^{M_0}_\b \cup Y^{M_0}_\w}}  
                                                                      \delta_X \Upsilon_{v_1 ( \tau_j(X) ),  \, M_0 + (i+1) N + j}  ,
\end{equation}
and finally define the non-negative bump function $\Upsilon\: S^2 \rightarrow [0,1)$ by
\begin{equation}  \label{eqPfthmPerturbToStrongNonIntegrable_Upsilon}
\Upsilon   \coloneqq   \sum_{j\in\N}    \sum_{m\in\N}     \sum\limits_{\substack{X \in\X^{M_0 + m N} \\X \subseteq Y^{M_0}_\b \cup Y^{M_0}_\w}}  
                                                                      \delta_X \Upsilon_{v_1 ( \tau_j(X) ),  \, M_0 + m N + j}  .
\end{equation}
Here the function $\tau_j$ is defined in (\ref{eqPfthmPerturbToStrongNonIntegrable_tau}). It follows immediately from Property~(b) of the bump functions that the series in (\ref{eqPfthmPerturbToStrongNonIntegrable_varphi_i}) and (\ref{eqPfthmPerturbToStrongNonIntegrable_Upsilon}) converge uniformly and absolutely.

We set $\phi \coloneqq \varphi + \Upsilon$.

For each $\c\in\{\b, \, \w\}$, each integer $M\geq M_0$, and each $M$-tile $X\in \X^M$ with $X\subseteq Y^{M_0}_\c$, we choose an arbitrary $\bigl(M_0 + \bigl\lceil \frac{ M - M_0 }{N} \bigr\rceil N \bigr)$-tile $X'$ with $X' \subseteq X$ and define $x_i(X) \coloneqq v_i(X')$ for each $i\in\{1, \, 2\}$.

Now we discuss some properties of the supports of the terms in the series defining $\Upsilon$ in (\ref{eqPfthmPerturbToStrongNonIntegrable_Upsilon}).  See Figure~\ref{figPerturb}.

Fix arbitrary integers $m, \, j\in\N$, by Property~(a) of the bump functions, (\ref{eqPfthmPerturbToStrongNonIntegrable_vi}), and properties of $u^1_\b,  \, u^1_\w \in \V^N$, we have
\begin{align}   
                        \supp \Upsilon_{ v_1 ( \tau_j(X) ), \, M_0 + m N + j }   
&\subseteq   \overline{W}^{M_0 + (m+1) N + j} \bigl( v_1 \bigl( \tau_j(X) \bigr) \bigr)    \label{eqPfthmPerturbToStrongNonIntegrable_SuppInTile} \\
&\subseteq  \inte \bigl( \tau_j(X) \bigr)
\subseteq \tau_j \bigl( Y^{M_0}_\b \cup Y^{M_0}_\w  \bigr),   \notag
\end{align}
for each $(M_0 + m N)$-tile $X\in \X^{M_0 + m N}$ with $X\subseteq Y^{M_0}_\b \cup Y^{M_0}_\w$. Consequently, by (\ref{eqPfthmPerturbToStrongNonIntegrable_SuppInTile}) and the fact that $\tau_{j_1} \bigl( Y^{M_0}_\b \cup Y^{M_0}_\w \bigr)$ and $\tau_{j_2} \bigl( Y^{M_0}_\b \cup Y^{M_0}_\w \bigr)$ are disjoint for distinct $j_1, \, j_2\in\N$ (see Figure~\ref{figPerturb}), we have
\begin{equation}  \label{eqPfthmPerturbToStrongNonIntegrable_SuppDisjoint}
 \supp \Upsilon_{ v_1 ( \tau_{j_1} (X_1) ), \, M_0 + m N + j_1 }  \cap  \supp \Upsilon_{ v_1 ( \tau_{j_2} (X_2) ), \, M_0 + m N + j_2 }  = \emptyset
\end{equation}
for each pair of integers $j_1,  \, j_2 \in \N$ and each pair of  $(M_0 + m N)$-tiles $X_1,  \, X_2 \in\X^{M_0 + m N}$ with $X_1 \cup X_2 \subseteq Y^{M_0}_{\b} \cup Y^{M_0}_{\w}$ and $(j_1, X_1) \neq (j_2, X_2)$.

We are now ready to verify Property~(iii) in Theorem~\ref{thmPerturbToStrongNonIntegrable}.

\smallskip

\emph{Property~(iii).} By (\ref{eqPfthmPerturbToStrongNonIntegrable_SuppDisjoint}), Property~(b) of the bump functions, and (\ref{eqPfthmPerturbToStrongNonIntegrable_M0}),
\begin{align*}
&                \norm{\Upsilon}_{\CCC^0(S^2)}  \\
&\qquad\leq     \sum_{ m\in\N}  \sup \bigl\{  \norm{   \Upsilon_{ v_1 ( \tau_j(X) ), \, M_0 + m N + j }   }_{\CCC^0(S^2)}   :  j\in\N, \, X\in \X^{M_0 + m N}, \, X\subseteq   Y^{M_0}_\b \cup Y^{M_0}_\w  \bigr\}  \\
&\qquad\leq     \sum_{ m\in\N}  C_{\dagger} \Lambda ^{ - \alpha (M_0 + m N)}  \varepsilon\\
&\qquad\leq        C_{\dagger}   \Lambda^{-\alpha M_0} \varepsilon \big/ \bigl( 1-\Lambda^{-\alpha N} \bigr) \\
&\qquad\leq        \varepsilon /2.
\end{align*}

Fix $x, \, y\in S^2$ with $x\neq y$. 

Note that $\supp \Upsilon \subseteq \bigcup_{j\in\N} \tau_j \bigl( Y^{M_0}_\b \cup Y^{M_0}_\w \bigr)$ and that this union is a disjoint union. We bound $\frac{ \abs{\Upsilon(x) - \Upsilon(y)} } { d(x,y)^\alpha }$ by considering the following cases:

\smallskip

\emph{Case 1.} $x \notin \supp \Upsilon$ and $y\notin \supp \Upsilon$. Then $\Upsilon(x) - \Upsilon(y)=0$.

\smallskip

\emph{Case 2.} $\{x,  \, y\} \cap \tau_j \bigl( Y^{M_0}_\b \cup Y^{M_0}_\w \bigr) \neq \emptyset$ and $\{x,  \, y\} \nsubseteq \tau_j  ( f(\xi_0) \setminus \xi_0  )$ for some $j\in\N$. Without loss of generality, we can assume that $j$ is the smallest such integer. Then by (\ref{eqPfthmPerturbToStrongNonIntegrable_Y_location}), Lemma~\ref{lmCellBoundsBM}~(i), and Property~(b) of the bump functions,
\begin{align*}
&                  \abs{\Upsilon(x) - \Upsilon(y)} / d(x,y)^\alpha  \\
&\qquad  \leq \frac{  \sum_{ m\in\N}  \sup \bigl\{  \norm{   \Upsilon_{ v_1 ( \tau_j(X) ), \, M_0 + m N + j }   }_{\CCC^0(S^2)}   :  X\in \X^{M_0 + m N}, \, X\subseteq   Y^{M_0}_\b \cup Y^{M_0}_\w \bigr\}  }  
                                                            { C^{-\alpha} \Lambda^{-\alpha (M_0 + j) } }  \\
&\qquad  \leq  C^{\alpha} \Lambda^{\alpha (M_0 + j) }      \sum_{ m\in\N}  C_{\dagger} \Lambda ^{ - \alpha (M_0 + m N + j)}  \varepsilon      \\
&\qquad\leq \varepsilon  C  C_{\dagger} \Lambda^{ - \alpha N }  \big/ \bigl( 1 -  \Lambda^{ - \alpha N }   \bigr)     \\
&\qquad\leq \varepsilon    C   C_{\dagger} (3C C_{\dagger})^{-1}  \bigl( 1 -  3^{-1}   \bigr)^{-1}        \\
&\qquad =     \varepsilon / 2 .
\end{align*} 
The last inequality follows from our choice of $N$ at the beginning of this proof.

\smallskip

\emph{Case 3.} $\{x,  \, y\} \cap \tau_j \bigl( Y^{M_0}_\b \cup Y^{M_0}_\w \bigr) \neq \emptyset$ and $\{x,  \, y\} \subseteq \tau_j  ( f(\xi_0) \setminus \xi_0  )$ for some $j\in\N$.  Note that such $j$ is unique. Then by (\ref{eqPfthmPerturbToStrongNonIntegrable_Upsilon}) and our constructions of $Y^{M_0}_\b,  \, Y^{M_0}_\w \in \X^{M_0}$ and $\xi \in \Sigma_{f,\,\CC}^-$, we get that for each $z\in\{x, \, y\}$,
\begin{equation}  \label{eqPfthmPerturbToStrongNonIntegrable_PropertyIIICase3}
\Upsilon (z)  =    \sum_{m\in\N}     \sum\limits_{\substack{X \in\X^{M_0 + m N} \\X \subseteq Y^{M_0}_\b \cup Y^{M_0}_\w}}  
                                                                      \delta_X \Upsilon_{v_1 ( \tau_j(X) ),  \, M_0 + m N + j}  (z).
\end{equation}

Since $f$ is an expanding Thurston map, we can define an integer
\begin{align*}
m_1 \coloneqq \max \bigl\{  k \in \Z    :  \text{there exist } &  X_1,\, X_2 \in \X^{M_0 + k N + j}  \text{ such that } \\
                                                                                        &x\in X_1, \, y\in X_2, \text{ and } X_1 \cap X_2 \neq \emptyset   \bigr\}.
\end{align*}

If $m_1 \leq 0$, then by (\ref{eqPfthmPerturbToStrongNonIntegrable_PropertyIIICase3}), (\ref{eqPfthmPerturbToStrongNonIntegrable_SuppInTile}), Property~(b) of the bump functions, Lemma~\ref{lmCellBoundsBM}~(i), and (\ref{eqDefC27}), we have
\begin{align*}
&                        \abs{\Upsilon(x) - \Upsilon(y)} / d(x,y)^\alpha \\
&\qquad  \leq \sum_{m\in\N}  \frac{ \sup \bigl\{  \norm{   \Upsilon_{ v_1 ( \tau_j(X) ), \, M_0 + m N + j }   }_{\CCC^0(S^2)}   :  X\in \X^{M_0 + m N}, \, X\subseteq   Y^{M_0}_\b \cup Y^{M_0}_\w \bigr\}    } 
                                                                          { d(x,y)^\alpha }                    \\
&\qquad  \leq \bigl( C^{-1}  \Lambda^{ -  (M_0 + N + j )}  \bigr)^{-\alpha} \sum_{m\in\N}  C_{\dagger} \Lambda^{ - \alpha (M_0 + m N + j) } \varepsilon   \\      
&\qquad  \leq   C_{\dagger}   C \bigl(  1 - \Lambda^{ - \alpha N}   \bigr)^{-1}  \varepsilon     \\
&\qquad  \leq    (C_{\sharp} - 1 ) \varepsilon.                                                                 
\end{align*}

If $m_1 \geq 1$, then $y\in U^{M_0 + m_1 N + j} (x)$ and $y \notin U^{M_0 + (m_1 + 1) N + j} (x)$ (see (\ref{defU^n})). Choose $X_1, \, X_2 \in \X^{M_0 + m_1 N + j}$ such that $x\in X_1$, $y\in X_2$, and $X_1 \cap X_2 = \emptyset$. For each $i\in\{1, \, 2\}$ and each $m\in\N$ with $1\leq m \leq m_1$, we denote the unique $(M_0 + m N + j)$-tile containing $X_i$ by $Y^i_m$. Then by (\ref{eqPfthmPerturbToStrongNonIntegrable_PropertyIIICase3}), (\ref{eqPfthmPerturbToStrongNonIntegrable_SuppInTile}), Properties~(b) and (c) of the bump functions, Lemma~\ref{lmCellBoundsBM}~(i), (\ref{eqPfthmPerturbToStrongNonIntegrable_rho}), and (\ref{eqDefC27}),

\begin{align*}
&                       \abs{\Upsilon(x) - \Upsilon(y)} / d(x,y)^\alpha   \\
&\qquad  \leq  \sum_{m\in\N}    \sum\limits_{\substack{X \in\X^{M_0 + m N} \\X \subseteq Y^{M_0}_\b \cup Y^{M_0}_\w}}  
                           \frac{ \delta_X  \abs{ \Upsilon_{v_1 ( \tau_j(X) ),  \, M_0 + m N + j} (x) -  \Upsilon_{v_1 ( \tau_j(X) ),  \, M_0 + m N + j} (y) } } { d(x,y)^\alpha }                                                  \\
&\qquad  \leq    \sum_{m=m_1}^{+\infty} 
                                 \frac{ \sup \bigl\{  \norm{   \Upsilon_{ v_1 ( \tau_j(X) ), \, M_0 + m N + j }   }_{\CCC^0(S^2)}   :  X\in \X^{M_0 + m N}, \, X\subseteq   Y^{M_0}_\b \cup Y^{M_0}_\w \bigr\} } { d(x,y)^\alpha }   \\
&\qquad\qquad + \sum_{m=1}^{m_1 - 1}      \sum_{i\in\{1, \, 2\}}
                                \frac{   \abs{ \Upsilon_{v_1 ( Y^i_m ),  \, M_0 + m N + j} (x) -  \Upsilon_{v_1 ( Y^i_m ),  \, M_0 + m N + j} (y) }  } { d(x,y)^\alpha }                                         \\
&\qquad  \leq \frac{  \sum_{m=m_1}^{+\infty}  C_{\dagger} \Lambda^{ - \alpha (M_0 + m N + j) } \varepsilon 
                                   +  \sum_{m=1}^{m_1 - 1}   4 C_{\dagger} \Lambda^{ - \alpha (M_0 + m N + j) } \varepsilon (D_N - 1)^{ - (m_1 - m - 1)  }  } 
                                   {  C^{-\alpha}  \Lambda^{ - \alpha (M_0 + (m_1 + 1) N + j )}  }   \\
&\qquad  \leq   C_{\dagger}   C \bigl( \Lambda^{ \alpha N} \bigl( 1 - \Lambda^{ - \alpha N} \bigr)^{-1} + 4   (1-\rho)^{-1} \bigr)  \varepsilon     \\
&\qquad       =    (C_{\sharp} - 1 ) \varepsilon.                                                                        
\end{align*}

\smallskip

To summarize, we have shown that 
\begin{equation*}
	\Hnorm{\alpha}{\phi - \varphi}{(S^2,d)} \leq \biggl(\frac{1}{2} +\frac{1}{2} + C_{\sharp} - 1 \biggr) \varepsilon  = C_{\sharp} \varepsilon, 
\end{equation*}
establishing Property~(iii) in Theorem~\ref{thmPerturbToStrongNonIntegrable}.

\smallskip

Finally, we are going to verify Properties~(i) and (ii) in Theorem~\ref{thmPerturbToStrongNonIntegrable}.

Fix arbitrary $\c\in\{\b, \, \w\}$, $M\in\N$ with $M\geq M_0$, and $X_0 \in \X^M$ with $X_0 \subseteq Y^{M_0}_\c$. Denote $m_0 \coloneqq \bigl\lceil \frac{ M - M_0 }{N} \bigr\rceil$, $M' \coloneqq M_0 + m_0 N \in [M, M+N)$, and fix $X' \in \X^{M'}$ with $x_1(X_0) = v_1(X') \in \V^{M' + N}$ and $x_2(X_0) = v_2(X') \in \V^{M' + N}$.

\smallskip

\emph{Property~(i).} Fix arbitrary $i\in\{1, \, 2\}$. Since $\overline{W}^{M' + N} (x_i(X_0)) \subseteq \inte(X') \subseteq \inte(X_0)$ and $\overline{W}^{M' + N} (x_1(X_0)) \cap \overline{W}^{M' + N} (x_2(X_0)) = \emptyset$ (which follows from (\ref{eqPfthmPerturbToStrongNonIntegrable_Flower})), we get from Lemma~\ref{lmCellBoundsBM}~(i) and (ii) that
\begin{equation*}
d \bigl( x_i(X_0), S^2\setminus X_0 \bigr) 
\geq C^{-1} \Lambda^{ - (M'+N) } 
\geq C^{-1} \Lambda^{ - M - 2N } 
\geq C^{-2} \Lambda^{ - 2N }  \diam_d( X_0 ),
\end{equation*}
and similarly,
\begin{equation*}
d  ( x_1(X_0),  x_2(X_0)  ) 
\geq C^{-1} \Lambda^{ - (M'+N) } 
\geq C^{-1} \Lambda^{ - M - 2N } 
\geq C^{-2} \Lambda^{ - 2N }  \diam_d( X_0 ).
\end{equation*}

Property~(i) in Theorem~\ref{thmPerturbToStrongNonIntegrable} now follows from (\ref{eqPfthmPerturbToStrongNonIntegrable_varepsilon}).

\smallskip

\emph{Property~(ii).} We first show
\begin{equation}   \label{eqPfthmPerturbToStrongNonIntegrable_PrePropertyII}
 \Absbig{ \phi^{f,\,\CC}_{\xi,\,\xi'} ( x_1(X_0), x_2(X_0) ) } \geq   2 \varepsilon d( x_1(X_0), x_2(X_0) )^\alpha.
\end{equation}

Indeed, observe that by our construction and (\ref{eqPfthmPerturbToStrongNonIntegrable_SuppInTile}), for each integer $m>m_0$, the sets
\begin{equation*}
                     \bigcup_{j\in\N}     \bigcup\limits_{\substack{X \in\X^{M_0 + m N} \\X \subseteq Y^{M_0}_\b \cup Y^{M_0}_\w}}  \supp \Upsilon_{ v_1 ( \tau_j(X) ), \, M_0 + m N + j }
\subseteq   \bigcup_{j\in\N}     \bigcup\limits_{\substack{X \in\X^{M_0 + m N} \\X \subseteq Y^{M_0}_\b \cup Y^{M_0}_\w}}    \inte( \tau_j(X) )
\end{equation*}
are disjoint from the backward orbits of $v_1(X') \in \V^{M_0 + (m_0 +1) N}$ and $v_2(X') \in \V^{M_0 + (m_0 +1) N}$ under $\xi$ and $\xi'$. Thus, by (\ref{eqPfthmPerturbToStrongNonIntegrable_varphi_i}),
\begin{align*}
&                   \Absbig{ \phi^{f,\,\CC}_{\xi,\,\xi'} ( x_1(X_0), x_2(X_0) ) } \\
&\qquad =   \Absbig{ \phi^{f,\,\CC}_{\xi,\,\xi'} ( v_1(X'),    v_2(X') ) }    \\
&\qquad =   \Absbigg{ \Bigl( \varphi_{m_0} +  \sum_{j\in\N}    \sum_{m=m_0 + 1}^{+\infty}     \sum\limits_{\substack{X \in\X^{M_0 + m N} \\X \subseteq Y^{M_0}_\b \cup Y^{M_0}_\w}}  
                                                                      \delta_X \Upsilon_{v_1 ( \tau_j(X) ),  \, M_0 + m N + j}  \Bigr)^{f,\,\CC}_{\xi,\,\xi'} ( v_1(X'), v_2(X') ) }  \\
&\qquad =   \Absbig{ ( \varphi_{m_0} )^{f,\,\CC}_{\xi,\,\xi'} ( v_1(X'),    v_2(X') ) }                                                                  .
\end{align*}

We observe that for each $j\in\N$, the sets
\begin{equation*}
                   \bigcup_{j\in\N}    \bigcup\limits_{\substack{X \in\X^{M_0 + m_0 N}  \setminus \{ X' \} \\ X \subseteq  Y^{M_0}_\b \cup Y^{M_0}_\w   }}  \supp \Upsilon_{ v_1 ( \tau_j(X) ), \, M_0 + m_0 N + j }
\subseteq  \bigcup_{j\in\N}    \bigcup\limits_{\substack{X \in\X^{M_0 + m_0 N}  \setminus \{ X' \} \\ X \subseteq  Y^{M_0}_\b \cup Y^{M_0}_\w   }}   \inte( \tau_j(X) )
\end{equation*}
are disjoint from the backward orbits of $v_1(X')$ and $v_2(X')$ under $\xi$ and $\xi'$ (by (\ref{eqPfthmPerturbToStrongNonIntegrable_SuppInTile}) and our choices of $\xi$ and $\xi'$ from Lemma~\ref{lmDisjointBackwardOrbits}). See Figure~\ref{figPerturb}. Thus, for each $X\in \X^{M_0 + m_0 N}$ with $X \subseteq  Y^{M_0}_\b \cup Y^{M_0}_\w$ and $X \neq X'$,  we have
\begin{equation}    \label{eqPfthmPerturbToStrongNonIntegrable_UpsilonTempDistr0}
( \Upsilon_{v_1 ( \tau_j(X) ),  \, M_0 + m_0 N + j}  )^{f,\,\CC}_{\xi,\,\xi'} ( v_1(X'), v_2(X') )  = 0.
\end{equation}

By our construction in (\ref{eqPfthmPerturbToStrongNonIntegrable_deltaX}) and (\ref{eqPfthmPerturbToStrongNonIntegrable_varphi_i}), if
\begin{equation*}
\Absbig{ ( \varphi_{m_0 - 1} )^{f,\,\CC}_{\xi,\,\xi'} ( v_1(X'),    v_2(X') ) }  \geq 2 \varepsilon d( v_1(X'), v_2(X') )^\alpha,
\end{equation*}
then $\delta_{X'} = 0$, and consequently, by (\ref{eqPfthmPerturbToStrongNonIntegrable_varphi_i}) and (\ref{eqPfthmPerturbToStrongNonIntegrable_UpsilonTempDistr0}), we have
\begin{equation*}
         \Absbig{ ( \varphi_{m_0} )^{f,\,\CC}_{\xi,\,\xi'} ( v_1(X'),    v_2(X') ) }  
=       \Absbig{ ( \varphi_{m_0 - 1} )^{f,\,\CC}_{\xi,\,\xi'} ( v_1(X'),    v_2(X') ) }  
\geq  2 \varepsilon d( v_1(X'), v_2(X') )^\alpha.
\end{equation*}
On the other hand, if
\begin{equation*}
\Absbig{ ( \varphi_{m_0 - 1} )^{f,\,\CC}_{\xi,\,\xi'} ( v_1(X'),    v_2(X') ) }  < 2 \varepsilon d( v_1(X'), v_2(X') )^\alpha,
\end{equation*}
then $\delta_{X'} = 1$ (see (\ref{eqPfthmPerturbToStrongNonIntegrable_deltaX})), and consequently, by (\ref{eqPfthmPerturbToStrongNonIntegrable_varphi_i}), (\ref{eqPfthmPerturbToStrongNonIntegrable_UpsilonTempDistr0}), Property~(a) of the bump functions, Lemma~\ref{lmCellBoundsBM}~(ii), and (\ref{eqDefC26}), we get
\begin{align*}
&                            \Absbig{ ( \varphi_{m_0} )^{f,\,\CC}_{\xi,\,\xi'} ( v_1(X'),    v_2(X') ) }    \\
&\qquad \geq      \AbsBig{ \sum_{j\in\N}   ( \Upsilon_{v_1(\tau_j(X')), \, M' + j}   )^{f,\,\CC}_{\xi,\,\xi'}  ( v_1(X'), v_2(X') ) } 
                            - \Absbig{ ( \varphi_{m_0 - 1} )^{f,\,\CC}_{\xi,\,\xi'} ( v_1(X'),    v_2(X') ) }    \\
&\qquad \geq     \AbsBig{ \sum_{j\in\N}    \Upsilon_{v_1(\tau_j(X')), \, M' + j}  ( v_1 ( \tau_j (X') ) )  } - 2 \varepsilon d( v_1(X'), v_2(X') )^\alpha    \\
&\qquad  =           \sum_{j\in\N} C_{\dagger} \Lambda^{ - \alpha (M'+j) } \varepsilon   -  2 \varepsilon d( v_1(X'), v_2(X') )^\alpha   \\
&\qquad \geq      \Lambda^{-\alpha}  ( 1- \Lambda^{-\alpha} )^{-1} \varepsilon C_{\dagger} C^{-\alpha}   ( \diam_d(X')  )^\alpha  -  2 \varepsilon d( v_1(X'), v_2(X') )^\alpha  \\
&\qquad \geq     2 \varepsilon d( v_1(X'), v_2(X') )^\alpha.
\end{align*}

Hence, we have proved (\ref{eqPfthmPerturbToStrongNonIntegrable_PrePropertyII}). Now we are going to establish (\ref{eqSNIBoundsPerturb}).

Fix arbitrary $N'\geq N_0$. Define $X^{N'+M_0}_{\c,1} \coloneqq \tau_{N'} \bigl( Y^{M_0}_\c \bigr)$ and $X^{N'+M_0}_{\c,2} \coloneqq \tau'_{N'} \bigl( Y^{M_0}_\c \bigr)$ (see also (\ref{eqPfthmPerturbToStrongNonIntegrable_tau})). Note that $\varsigma_1 = \tau_{N'}|_{ Y^{M_0}_\c }$ and $\varsigma_2 = \tau'_{N'}|_{ Y^{M_0}_\c }$.

Then by Lemmas~\ref{lmSnPhiBound},~\ref{lmCellBoundsBM}~(i) and (ii), Proposition~\ref{propCellDecomp}~(i), and Properties~(i) and (iii) in Theorem~\ref{thmPerturbToStrongNonIntegrable},
\begin{align*}
&                       \frac{ \abs{  S_{N' }\phi ( \varsigma_1 (x_1(X_0)) ) -  S_{N' }\phi ( \varsigma_2 (x_1(X_0)) )  -S_{N' }\phi ( \varsigma_1 (x_2(X_0)) ) +  S_{N' }\phi ( \varsigma_2 (x_2(X_0)) )   }  } 
                                  {  d(x_1(X_0),x_2(X_0))^\alpha  }  \\
&\qquad \geq  \frac{ \Absbig{ \phi^{f,\,\CC}_{\xi,\,\xi'} (x_1(X_0)) , x_2(X_0))  }   } { d(x_1(X_0),x_2(X_0))^\alpha }
                                 -  \limsup_{n\to+\infty}       \frac{ \abs{  S_{n - N' }\phi ( \tau_n (v_1(X')) ) -  S_{n - N' }\phi ( \tau_n (v_2(X')) )  }  }   {  \varepsilon^\alpha ( \diam_d (X_0))^\alpha  }  \\
&\qquad\qquad    -  \limsup_{n\to+\infty}       \frac{ \abs{  S_{n - N' }\phi ( \tau'_n (v_1(X')) ) -  S_{n - N' }\phi ( \tau'_n (v_2(X')) )  }  }   {  \varepsilon^\alpha ( \diam_d (X_0))^\alpha  }              \\
&\qquad \geq 2 \varepsilon -  \frac{\Hseminorm{\alpha,\, (S^2,d)}{\phi} C_0}{1-\Lambda^{-\alpha}} \cdot
                                                     \frac{    d ( \tau_{N'} (v_1(X')) , \tau_{N'} (v_2(X')))^\alpha  +  d ( \tau'_{N'} (v_1(X')) , \tau'_{N'} (v_2(X')))^\alpha  }   {  \varepsilon^\alpha ( \diam_d (X_0))^\alpha  }   \\
&\qquad \geq 2 \varepsilon -  \frac{\Hseminorm{\alpha,\, (S^2,d)}{\phi} C_0}{1-\Lambda^{-\alpha}} \cdot
                                                     \frac{   ( \diam_d ( \tau_{N'} (X') ))^\alpha  +  ( \diam_d ( \tau'_{N'} (X') ))^\alpha  }   {  \varepsilon^\alpha ( \diam_d (X_0))^\alpha  }          \\
&\qquad \geq 2 \varepsilon -  \frac{ \bigl(\Hnorm{\alpha}{\varphi}{(S^2,d)} + \varepsilon C_{\sharp} \bigr) C_0}{1-\Lambda^{-\alpha}} \cdot
                                                     \frac{  2 C^\alpha \Lambda^{ - \alpha (M_0 + m_0  N + N' )}  }   {  \varepsilon^\alpha C^{-\alpha} \Lambda^{ - \alpha (M_0 + m_0 N )} }          \\
&\qquad \geq 2 \varepsilon -    2 C^2 \varepsilon^{-\alpha} \bigl(\Hnorm{\alpha}{\varphi}{(S^2,d)} + \varepsilon C_{\sharp} \bigr) C_0    \Lambda^{ - \alpha N_0}  ( 1-\Lambda^{-\alpha} )^{-1}  \\
&\qquad \geq \varepsilon.                                                                                                 
\end{align*}
The last inequality follows from (\ref{eqPfthmPerturbToStrongNonIntegrable_N0}). Property~(ii) in Theorem~\ref{thmPerturbToStrongNonIntegrable} is now established.

\smallskip

The proof of Theorem~\ref{thmPerturbToStrongNonIntegrable} is now complete.
\end{proof}

\begin{rem}
	As remarked by the referee, instead of our more combinatorial approach to construct the bump functions $\Upsilon_{v,n}$ in the proof above, one may approach it from a more metric point of view.  More precisely, as in \cite[(16.7)]{BM17}, one may define a distance function $\rho$ with the modification that only $(n+mN)$-tiles, $m\in\N$, are allowed in any tile chain joining the points, while the weight of an $(n+mN)$-tile is chosen to be $(D_N - 1)^{-(m-1)}$. Then it should follow from the definition that $\rho(x,y) \leq (D_N - 1)^{-(m-1)}$ if $x$ and $y$ are contained in an $(n+mN)$-tile. Moreover, following arguments similar to ones on pp.~319--320 in \cite{BM17}, one should get that $\rho(x,v) \geq 1$ if $v$ is an $(n+N)$-vertex and $x \in S^2 \setminus W^{n+N}(v)$. Then we can alternatively define $\Upsilon_{v,n} \coloneqq \max\{0, \, C_{\dagger} \Lambda^{-\alpha n} (1- \rho(x,v) \}$ and Properties~(a), (b), and~(c) follow.
\end{rem}

\section{Genericity}   \label{sctGeneric}

\begin{proof}[Proof of Theorem~\ref{thmSNIGeneric}]
Note that for each $n\in\N$, the map $F\coloneqq f^n$ is an expanding Thurston map with $\post F = \post f$ and with the combinatorial expansion factor $\Lambda_0(F) = (\Lambda_0(f))^n$ (by (\ref{eqDefCombExpansionFactor}) and Lemma~\ref{lmCellBoundsBM}~(vii)), and  $d$ is a visual metric for $F$ with expansion factor $\Lambda^n$ (by Lemma~\ref{lmCellBoundsBM}). Thus, by \cite[Theorem~15.1]{BM17} (see also Lemma~\ref{lmCexistsL}) and Lemma~\ref{lmSNIwoC}, it suffices to prove Theorem~\ref{thmSNIGeneric} under the additional assumption on the existence of a Jordan curve $\CC \subseteq S^2$ satisfying $\post f \subseteq \CC$ and $f(\CC) \subseteq \CC$. We fix such a curve $\CC$ and consider the cell decompositions induced by the pair $(f,\CC)$ in this proof.

We first show that $\mathcal{S}^\alpha$ is an open subset of $\Holder{\alpha}(S^2,d)$, for each $\alpha\in(0,1]$.

Fix $\alpha\in (0,1]$ and $\phi\in \mathcal{S}^\alpha$ with associated constants $N_0,  \, M_0 \in \N$, $\varepsilon \in (0,1)$, and $M_0$-tiles $Y^{M_0}_\b \in \X^{M_0}_\b$ and $Y^{M_0}_\w \in \X^{M_0}_\w$ as in Definition~\ref{defStrongNonIntegrability}. For each $\c\in\{\b, \, \w\}$, each integer $M\geq M_0$, and each $X\in \X^M$ with $X\subseteq Y^{M_0}_\c$, we choose two points $x_1(X),  \, x_2(X) \in X$ associated to $\phi$ as in Definition~\ref{defStrongNonIntegrability}.

Recall $C_0 > 1$ is the constant depending only on $f$, $\CC$, and $d$ from Lemma~\ref{lmMetricDistortion}.

\smallskip

\emph{Claim.} Fix an arbitrary $\psi \in \Holder{\alpha}(S^2,d)$ with
\begin{equation}   \label{eqPfthmSNIGeneric}
\Hnorm{\alpha}{\phi - \psi}{(S^2,d)} \leq C_0 ( 1- \Lambda^{-\alpha} ) \varepsilon /4.
\end{equation}
Then $\psi$ satisfies Properties~(i) and (ii) in Definition~\ref{defStrongNonIntegrability} with the constant $\varepsilon$ for $\phi$ replaced by $\frac{\varepsilon}{2}$ for $\psi$, and with the same constants $N_0,  \, M_0\in\N$, $M_0$-tiles $Y^{M_0}_\b$, $Y^{M_0}_\w$, and points $x_1(X)$, $x_2(X)$ as those for $\phi$.

\smallskip

Indeed, Property~(i) in Definition~\ref{defStrongNonIntegrability} for $\psi$ follows trivially from that for $\phi$. To establish Property~(ii) for $\psi$, we fix arbitrary integer $N\geq N_0$, and $(N+M_0)$-tiles $X^{N+M_0}_{\c,1},  \, X^{N+M_0}_{\c,2} \in \X^{N+M_0}$ that satisfy (\ref{eqSNIBoundsDefn}) and $Y^{M_0}_\c = f^N\bigl(  X^{N+M_0}_{\c,1}  \bigr) =   f^N\bigl(  X^{N+M_0}_{\c,2}  \bigr)$. Then by (\ref{eqSNIBoundsDefn}), Lemma~\ref{lmSnPhiBound}, and (\ref{eqPfthmSNIGeneric}),
\begin{align*}
&                           \abs{  S_{N }\psi ( \varsigma_1 (x_1(X)) ) -  S_{N }\psi ( \varsigma_2 (x_1(X)) )  -S_{N }\psi ( \varsigma_1 (x_2(X)) ) +  S_{N }\psi ( \varsigma_2 (x_2(X)) )   }   \\
&\qquad       \geq   \abs{  S_{N }\phi ( \varsigma_1 (x_1(X)) ) -  S_{N }\phi ( \varsigma_2 (x_1(X)) )  -S_{N }\phi ( \varsigma_1 (x_2(X)) ) +  S_{N }\phi ( \varsigma_2 (x_2(X)) )   }  \\
&\qquad\qquad   -  \sum_{i\in\{1, \, 2\}}       \abs{  S_{N } (\psi - \phi) ( \varsigma_i (x_1(X)) )  -S_{N } (\psi - \phi) ( \varsigma_i (x_2(X)) )  }    \\
&\qquad       \geq  d(x_1(X),x_2(X))^\alpha   \bigl( \varepsilon -  2  \Hseminorm{\alpha,\, (S^2,d)}{\psi - \phi} C_0 ( 1-\Lambda^{-\alpha} )^{-1} \bigr)       \\
&\qquad       \geq   d(x_1(X),x_2(X))^\alpha    \varepsilon   /  2     .
\end{align*}
The claim is now established.

Hence, $\mathcal{S}^\alpha$ is open in $\Holder{\alpha}(S^2,d)$.

Finally, recall that $1<\Lambda \leq \Lambda_0(f)$ (see \cite[Theorem~16.3]{BM17}). Thus, if either $\alpha \in (0,1)$ or $\Lambda \neq \Lambda_0(f)$, then $\Lambda^{\alpha} < \Lambda_0(f)$, and the density of $\mathcal{S}^\alpha$ in $\Holder{\alpha}(S^2,d)$ follows immediately from Theorem~\ref{thmPerturbToStrongNonIntegrable}.
\end{proof}

\end{document}